\theoremstyle{plain}{
    \newtheorem{theorem}{Theorem}[section]
    \newtheorem{lem}[theorem]{Lemma}
    \newtheorem{cor}[theorem]{Corollary}
    \newtheorem{prop}[theorem]{Proposition}
    
}
\theoremstyle{definition}{
    \newtheorem{defn}[theorem]{Definition}
    \newtheorem{defn-thm}[theorem]{Definition-Theorem}
    \newtheorem{example}[theorem]{Example}
    \newtheorem{notation}[theorem]{Notation}
}
\theoremstyle{remark}{
    \newtheorem{rem}[theorem]{Remark}

}
\def\Id{\mathrm{Id}}
\def\cF{\mathcal{F}}
\def\cE{\mathcal{E}}
\def\cO{\mathcal{O}}
\def\cN{\mathcal{N}}
\def\cG{\mathcal{G}}
\def\cT{\mathcal{T}}
\def\fK{\mathfrak{K}}
\def\fE{\mathfrak{E}}
\def\fF{\mathfrak{F}}
\def\K{\mathbb{K}}
\def\R{\mathbb{R}}
\def\Q{\mathbb{Q}}
\def\Z{\mathbb{Z}}
\def\P{\mathbb{P}}
\def\C{\mathbb{C}}
\def\E{\mathbb{E}}
\def\ep{\varepsilon}
\def\>{\rangle}
\def\<{\langle}
\def\>{\rangle}
\def\Tor{\mathrm{Tor}}
\def\Hom{\mathrm{Hom}}
\def\Spec{\mathrm{Spec}}
\def\rk{\mathrm{rk}}
\def\deg{\mathrm{deg}}
\def\Span{\mathrm{Span}}
\def\Div{\mathrm{Div}}           %Cartier Divisors
\def\WDiv{\mathrm{WDiv}}                    % Weil Divisors
\def\Gr{\mathrm{Gr}} 
\def\Cone{\mathrm{Cone}}
\def\l{\ell}
\newcommand{\disp}{\displaystyle}
\newcommand{\eqv}{\longleftrightarrow}
\newcommand{\imp}{\longrightarrow}
\DeclareMathOperator{\ddiv}{div}
\DeclareMathOperator{\mult}{mult}
\DeclareMathOperator{\Ker}{Ker}
\DeclareMathOperator{\Amp}{Amp}
\DeclareMathOperator{\Pic}{Pic}
\DeclareMathOperator{\Bl}{Bl}
\DeclareMathOperator{\pr}{pr}
\begin{document}

\title[Toric sheaves, stability and fibrations]{Toric sheaves, stability and fibrations}

\address{Univ Brest, UMR CNRS 6205, Laboratoire de Mathématiques de Bretagne
Atlantique, France}

\author[A. Napame]{Achim NAPAME}
\email{achim.napame@univ-brest.fr}

\author[C. Tipler]{Carl TIPLER}
\email{carl.tipler@univ-brest.fr}

%\date{\today}
%\keywords{Toric variety, equivariant sheaf, (semi)stability}

\begin{abstract}
For an equivariant reflexive sheaf over a polarised toric variety, we study slope stability of its reflexive pullback along a toric fibration. Examples of such fibrations include equivariant blow-ups and toric locally trivial fibrations. We show that stability (resp. unstability) is preserved under such pullbacks for so-called adiabatic polarisations. In the strictly semistable situation, under locally freeness assumptions, we provide a necessary and sufficient condition on the graded object to ensure stability of the pulled back sheaf. As applications, we provide various stable perturbations of semistable tangent sheaves, either by changing the polarisation, or by blowing-up a subvariety. Finally, our results apply uniformly in specific flat families and induce injective maps between the associated moduli spaces. 
\end{abstract}

\maketitle

\section{Introduction}
The study of moduli spaces of torsion-free coherent sheaves on a given variety is a fundamental problem in algebraic geometry. The construction of a quasi-projective structure on the moduli space can be achieved by considering stable sheaves \cite{HuLe}. In this paper, we will be interested in slope stability as introduced by Mumford and Takemoto \cite{Tak72}. Stable reflexive sheaves are of particular interest, given their close relation to stable vector bundles \cite{Har80}. Being tightly linked to the geometry of the ambiant variety, it is natural to investigate how they behave with respect to natural maps such as pullbacks. In this direction, a fundamental result of Mehta and Ramanathan \cite{MeRa} asserts that the restriction of a slope (semi)stable torsion-free sheaf to a generic complete intersection of high degree remains slope (semi)stable. In this paper, we address the problem of pulling-back a (semi)stable reflexive sheaf along a fibration, in the equivariant context of toric geometry.

Consider a toric fibration $\pi : X'\to X$ between $\Q$-factorial projective toric varieties defined over the complex numbers (see Section \ref{sec:background} for precise definitions). We will denote by $T$ the torus of $X$. Assume that $L$ is an ample divisor on $X$ and $L'$ is a relatively ample divisor on $X'$. Then, for $\ep \in \Q$ small enough, $L_\ep:=\pi^*L+\ep L'$ defines an ample $\Q$-divisor on $X'$. Following the terminology used in differential geometry, we will call the associated polarisation {\it adiabatic}. For a given torsion-free sheaf $\cE$ on $X$, we will denote by $\mu_L(\cE):=\deg_L(\cE)/\rk(\cE)$ its slope with respect to $L$, and call $\cE$ a {\it stable} (resp. {\it semistable}) sheaf with respect to $L$ if for all coherent subsheaves $\cF\subseteq\cE$ with strictly smaller rank, we have $\mu_L(\cF)<\mu_L(\cE)$ (resp. $\mu_L(\cF)\leq\mu_L(\cE)$). If for one subsheaf $\cF\subseteq\cE$ we have $\mu_L(\cF)>\mu_L(\cE)$, $\cE$ is called {\it unstable}. Then, our first result is the following:

\begin{theorem}
\label{theo:stablecaseintro}
Let $\cE$ be a $T$-equivariant stable reflexive sheaf on $(X,L)$. Then there is $\ep_0>0$
such that for all $\ep\in (0,\ep_0)\cap \Q$, the reflexive pullback $(\pi^*\cE)^{\vee \vee}$
is stable on $(X',L_\ep)$.
\end{theorem}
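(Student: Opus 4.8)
The plan is to verify stability of $\widetilde{\cE}:=(\pi^*\cE)^{\vee\vee}$ by testing it against $T'$-equivariant saturated subsheaves, where $T'$ is the torus of $X'$; this is legitimate because $\widetilde{\cE}$ is $T'$-equivariant, a saturated subsheaf of a reflexive sheaf is again reflexive, and a sheaf with a reductive group action is (semi)stable as soon as it is so against invariant subsheaves. So I fix such a subsheaf $\cF$ with $0<s:=\rk\cF<r:=\rk\cE$ and aim to show $\mu_{L_\ep}(\cF)<\mu_{L_\ep}(\widetilde{\cE})$ for $\ep$ in a right-neighbourhood of $0$ that can be chosen uniformly in $\cF$. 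For the uniformity I would use that $\mu_{L_\ep}(\cF)=\big(c_1(\cF)\cdot(\pi^*L+\ep L')^{n'-1}\big)/s$ (with $n':=\dim X'$) depends only on the numerical class of $c_1(\cF)$ and on $s$, together with the fact --- coming from the classification of equivariant reflexive sheaves --- that among equivariant saturated subsheaves of $\widetilde{\cE}$ of bounded rank, only finitely many classes $c_1(\cF)$ occur. It then suffices to show, for each such $\cF$, that $p_\cF(\ep):=\mu_{L_\ep}(\cF)-\mu_{L_\ep}(\widetilde{\cE})\in\Q[\ep]$ is negative for small $\ep>0$, and to set $\ep_0$ to be the minimum of the resulting finite set of thresholds.

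For a fixed $\cF$, write $k:=n'-\dim X\ge 0$ for the relative dimension. Expanding $\deg_{L_\ep}(\cG)=c_1(\cG)\cdot(\pi^*L+\ep L')^{n'-1}$ and using $\pi^*(L^j)=0$ for $j>\dim X$, this polynomial in $\ep$ is divisible by $\ep^{\max(0,k-1)}$; when $k\ge 1$ its $\ep^{k-1}$-coefficient is $\binom{n'-1}{k-1}\,c_1(\cG)\cdot\pi^*(L^{\dim X})\cdot(L')^{k-1}$, which by the projection formula is a positive multiple of $\deg_{L'|_F}(\cG|_F)$ for a general fibre $F$ of $\pi$. Since $\cE$ is locally free near $\pi(F)$, $\widetilde{\cE}|_F\cong\cO_F^{\,r}$, so $c_1(\widetilde{\cE})|_F=0$, whereas $c_1(\cF)|_F\cdot(L'|_F)^{k-1}\le 0$ because $\cF|_F$ saturates inside $\cO_F^{\,r}$ to a subsheaf whose quotient is globally generated and torsion-free. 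Hence the $\ep^{k-1}$-coefficient of $p_\cF$ is $\le 0$, and if it is strictly negative we are done for this $\cF$. (If $\pi$ is birational, so $k=0$, one runs the same argument on the $\ep^0$-coefficient, where the vanishing considered next is automatic.)

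The remaining case is $c_1(\cF)|_F\cdot(L'|_F)^{k-1}=0$. Then the quotient $\big(\cO_F^{\,r}/\cF|_F\big)^{\vee\vee}$ is globally generated, torsion-free, with trivial determinant on the toric variety $F$, hence trivial; therefore $\cF$ restricts on a general fibre --- equivalently, over the generic point of $X$ --- to a trivial subbundle $\cO^{\,s}$ of $\widetilde{\cE}$, determined by an $s$-dimensional subspace $V$ of the generic fibre of $\cE$. Letting $\cG\subseteq\cE$ be the saturation of the subsheaf cut out by $V$, one has $\rk\cG=s$, and $\cF$ agrees with $(\pi^*\cG)^{\vee\vee}$ away from a subset of $X'$ lying over a codimension-$\ge 2$ subset of $X$; consequently $c_1(\cF)=\pi^*c_1(\cG)+(\text{a combination of divisors contracted by }\pi)$. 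As $\pi_*$ kills contracted divisors capped with $(L')^{k}$, the $\ep^{k}$-coefficients of $\deg_{L_\ep}(\cF)$ and of $\deg_{L_\ep}(\widetilde{\cE})=\deg_{L_\ep}((\pi^*\cE)^{\vee\vee})$ come out to $\binom{n'-1}{k}\big((L')^{k}\cdot F\big)\deg_L(\cG)$ and $\binom{n'-1}{k}\big((L')^{k}\cdot F\big)\deg_L(\cE)$. So the $\ep^{k}$-coefficient of $p_\cF$ is a positive multiple of $\mu_L(\cG)-\mu_L(\cE)$, which is strictly negative by stability of $\cE$ on $(X,L)$ since $s<r$. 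In every case $p_\cF(\ep)<0$ for $\ep$ small and positive, and the finiteness from the first paragraph then yields one $\ep_0>0$ valid for all $\cF$.

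I expect the main obstacle to be the equality case of the second paragraph: first, establishing that a fibrewise-trivial destabilising subsheaf descends to a subsheaf of $\cE$ on $X$, which requires controlling the reflexive pullback and the structure of quotients of a trivial bundle on the fibres; and second, the first-Chern-class bookkeeping ensuring that the $\pi$-contracted divisors in $c_1(\cF)-\pi^*c_1(\cG)$ do not disturb the order-$\ep^{k}$ comparison. This rests on two facts about the reflexive pullback --- that it changes $c_1$ only by $\pi$-contracted divisors, and that pushforward annihilates such divisors capped with positive powers of a relatively ample class --- which I would isolate as lemmas in Section~\ref{sec:background}. By comparison, the uniformity of $\ep_0$ is cheap once the finiteness of the admissible Chern classes is in hand.
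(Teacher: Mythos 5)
Your overall strategy is sound and lands on the same key comparison as the paper --- the slope difference upstairs has leading coefficient proportional to $\mu_L(\cE)-\mu_L(\cG)$, with corrections from $\pi$-contracted divisors entering only at higher order in $\ep$ --- but you organize the argument quite differently. The paper never needs your dichotomy on the fibrewise degree: by Lemma \ref{lem:saturated-subsheaf}, every saturated equivariant reflexive subsheaf of $(\pi^*\cE)^{\vee\vee}$ is of the form $\cE'_F$ for a subspace $F\subseteq E$, and Proposition \ref{prop:pullback-sheaf-surjection} shows that its filtration along every ray $\rho'$ with $\phi_\R(\rho')=\{0\}$ is the trivial one; in other words, the test objects you allow yourself in your first paragraph are \emph{automatically} trivial on the general fibre, so your ``strictly negative fibrewise degree'' case is vacuous. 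The paper then gets the identity $c_1(\cE'_F)-c_1((\pi^*\cE_F)^{\vee\vee})=(\text{contracted divisors})$ for free from Corollary \ref{cor:pullback-sheaf-surjection}, and concludes via formulas (\ref{eq:slopepullback-fibration}) and (\ref{eq:slopepullback-fibration-sat-sheaf}) together with the finiteness Lemma \ref{lem:slope-number-vector-space} (your finiteness of Chern classes is the same statement). What your route buys is independence from Klyachko's classification in the first case, which would matter if you wanted to test against non-equivariant subsheaves; what it costs is that the descent step carries all the weight.

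On that descent step, the point you flag as the ``main obstacle'' is indeed where your write-up is short of a proof, and I would add that the fibrewise analysis alone cannot close it: triviality of $\cF$ on a \emph{general} fibre only controls the components of $c_1(\cF)-\pi^*c_1(\cG)$ along horizontal divisors (those dominating $X$). You must separately rule out components along divisors of $X'$ that dominate divisors of $X$, since these contribute exactly at order $\ep^{r}$ and would corrupt your final comparison. This does hold for the saturated $\cF$ you chose --- either by the explicit filtration computation of Corollary \ref{cor:pullback-sheaf-surjection}, or by localising at the generic point of an invariant divisor $D_\rho\subseteq X$, where $\cE$ is locally free and the saturated subsheaf $\cE_F$ is a direct summand over the resulting DVR, so that pullback commutes with saturation there --- but some such argument must be supplied; it is not a consequence of ``the structure of quotients of a trivial bundle on the fibres.'' With that lemma in place (it is precisely what Section \ref{sec:mainresult} of the paper establishes), your proof is correct.
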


This result relies essentially on the fact that in the torus equivariant context, it is
enough to test slope inequalities for equivariant and reflexive saturated subsheaves
\cite{Koo11,HNS19}. In general, the slope of a pulled back sheaf on $X'$ with respect to
$L_\ep$ admits an expansion in $\ep$, with the coefficient of the smallest exponent given
by the slope of the initial sheaf on $X$ with respect to $L$. Hence, the following is
straightforward:

\begin{prop}
\label{prop:unstablecaseintro}
Let $\cE$ be a $T$-equivariant unstable reflexive sheaf on $(X,L)$. Then there is $\ep_0>0$
such that for all $\ep\in (0,\ep_0)\cap \Q$, the reflexive pullback $(\pi^*\cE)^{\vee \vee}$
is unstable on $(X',L_\ep)$.
\end{prop}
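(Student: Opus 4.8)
The plan is to exhibit a single coherent subsheaf of $(\pi^*\cE)^{\vee\vee}$ that violates the slope inequality once $\ep$ is small enough; unlike the stable case, this requires no reduction to equivariant subsheaves.

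First I would unwind the hypothesis: since $\cE$ is unstable on $(X,L)$, there is a coherent $\cF\subseteq\cE$ with $\rk(\cF)<\rk(\cE)$ and $\mu_L(\cF)>\mu_L(\cE)$, and after replacing $\cF$ by its saturation in $\cE$ (which keeps the rank and does not decrease the slope) I may assume $\cF$ reflexive. Next I would pull back: over a dense open $U\subseteq X$ on which $\cE$ and $\cF$ are locally free and $\pi$ is flat, the map $\pi^*\cF\to\pi^*\cE$ is injective on the dense open $\pi^{-1}(U)$; hence the image $\cI$ of the composite $\pi^*\cF\to\pi^*\cE\to(\pi^*\cE)^{\vee\vee}$ is a torsion-free subsheaf of $(\pi^*\cE)^{\vee\vee}$ of rank $\rk(\cF)$, whose reflexive hull is $(\pi^*\cF)^{\vee\vee}$. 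Let $\cG$ be the saturation of $\cI$ in $(\pi^*\cE)^{\vee\vee}$. Then $\cG$ is a reflexive subsheaf of $(\pi^*\cE)^{\vee\vee}$ with
\[
\rk(\cG)=\rk(\cF)<\rk(\cE)=\rk\big((\pi^*\cE)^{\vee\vee}\big),\qquad \mu_{L_\ep}(\cG)\ \ge\ \mu_{L_\ep}(\cI)\ =\ \mu_{L_\ep}\big((\pi^*\cF)^{\vee\vee}\big),
\]
the inequality because saturation cannot decrease the slope and the last equality because the slope is insensitive to the reflexive hull.

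The core of the argument is the behaviour of $\mu_{L_\ep}$ as $\ep\to0$. Writing $m=\dim X$, $n=\dim X'$ and $d=n-m$ for the fibre dimension, for any torsion-free sheaf $\cH$ on $X$ I would expand $L_\ep^{\,n-1}=(\pi^*L+\ep L')^{n-1}$ and use $(\pi^*L)^{m+1}=0$ to discard every term of $\ep$-order below $d$; the projection formula, together with the fact that $c_1((\pi^*\cH)^{\vee\vee})$ and $\pi^*c_1(\cH)$ agree outside codimension two, then identifies the surviving leading term and yields
\[
\mu_{L_\ep}\big((\pi^*\cH)^{\vee\vee}\big)=c\,\mu_L(\cH)\,\ep^{d}+O(\ep^{d+1}),\qquad c:=\binom{n-1}{d}(L'|_F)^{d}>0,
\]
where $F$ is a general fibre and $c>0$ because $L'$ is relatively ample; crucially $c$ does not depend on $\cH$. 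Applying this to $\cH=\cF$ and $\cH=\cE$ and combining with the previous paragraph gives
\[
\mu_{L_\ep}(\cG)-\mu_{L_\ep}\big((\pi^*\cE)^{\vee\vee}\big)\ \ge\ c\,\big(\mu_L(\cF)-\mu_L(\cE)\big)\,\ep^{d}+O(\ep^{d+1}),
\]
whose right-hand side is strictly positive for all sufficiently small $\ep>0$ since $c>0$ and $\mu_L(\cF)>\mu_L(\cE)$. Choosing $\ep_0>0$ below this threshold, $\cG$ destabilises $(\pi^*\cE)^{\vee\vee}$ on $(X',L_\ep)$ for every $\ep\in(0,\ep_0)\cap\Q$.

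The main obstacle is the slope expansion displayed above: I expect the real work to be checking that the leading coefficient $c$ is genuinely positive and $\cH$-independent — this is exactly where relative ampleness of $L'$ is used — and, more pedantically, that intersection theory over $\Q$ on the $\Q$-factorial variety $X'$ and the identification of first Chern classes modulo codimension two both behave as expected. Everything else (the saturation estimate, the rank count, the final sign chase) is formal.
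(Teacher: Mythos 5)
Your argument is correct, and at its core it runs on the same engine as the paper's proof: the projection-formula expansion of $\deg_{L_\ep}$ of a pulled-back divisor class, whose leading term (of order $\ep^{d}$ with $d$ the fibre dimension, the paper's $r$) is a fixed positive multiple of the degree on the base, while anything supported on divisors that $\pi$ maps into codimension $\geq 2$ only enters at order $\ep^{d+1}$ — this is exactly Formulas (\ref{eq:slopepullback-fibration}) and (\ref{eq:slopepullback-fibration-sat-sheaf}), summarised in (\ref{eq:slope-pullback-fib}). The packaging, however, is genuinely different. The paper first invokes the equivariant reduction (Proposition \ref{prop:slope-with-saturated-sheaf} and Lemma \ref{lem:saturated-subsheaf}) so that the destabiliser can be taken of the form $\cE_F$ for a subspace $F\subseteq E$, and then reads the sign directly off (\ref{eq:slope-pullback-fib}); you instead take an arbitrary destabilising subsheaf, saturate it, push it through $\pi^*$, and control the discrepancies between the image $\cI$, its reflexive hull and its saturation $\cG$ by general sheaf theory. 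For this particular proposition your route is the leaner one: a single destabiliser suffices, so neither the finiteness statement (Lemma \ref{lem:slope-number-vector-space}) nor equivariance is needed — consistent with the paper's remark that the equivariant hypothesis should be removable. The one genuinely delicate point is the one you flag: $\pi^{-1}$ of the non-locally-free locus of $\cF$ may contain divisors (e.g.\ for blow-ups), so $\pi^*\cF$ can have torsion in codimension one and $c_1((\pi^*\cF)^{\vee\vee})$ need not equal $\pi^*c_1(\cF)$; your identification of $\cI$ with $\pi^*\cF$ modulo torsion, together with the fact that the offending divisors are $\pi$-contracted and hence of degree $O(\ep^{d+1})$, handles this correctly, so the sign of the leading coefficient is unaffected.
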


Our main result deals with the more delicate strictly semistable situation. Let $\cE$ be a
strictly semistable torsion-free sheaf on $(X,L)$. It then admits a Jordan-H\"older
filtration
$$
0=\cE_1 \subseteq \cE_2 \subseteq \ldots \subseteq \cE_\ell=\cE
$$
by slope semistable coherent subsheaves with stable quotients of same slope as $\cE$ \cite{HuLe}. The reflexive pullbacks of the $\cE_i$'s form natural candidates to test for stability of the reflexive pullback of $\cE$ on $(X',L_\ep)$. In fact, we will see shortly that if $\cE$ and $\Gr(\cE):=\bigoplus_{i=1}^{\ell-1}\cE_{i+1}/\cE_i $ are locally free, it is actually enough to compare slopes with these sheaves. In order to state our result, we will introduce some notations.
Let $\mathfrak{E}$ be the set of equivariant and saturated reflexive subsheaves
$\cF \subseteq \cE$ arising in a Jordan-H\"older filtration of $\cE$.
For two coherent sheaves $\cF_1$ and $\cF_2$ on $X'$, we will write
$\mu_0(\cF_1)<\mu_0(\cF_2)$ (resp. $\mu_0(\cF_1)\leq \mu_0(\cF_2)$ or
$\mu_0(\cF_1)=\mu_0(\cF_2)$) when the coefficient of the smallest exponent in the expansion
in $\ep$ of $\mu_{L_\ep}(\cF_2)-\mu_{L_\ep}(\cF_1)$ is strictly positive (resp. greater or
equal to zero or equal to zero). Recall that a locally free semistable sheaf is called
{\it sufficiently smooth} if its graded object is locally free.

\begin{theorem}
\label{theo:semistablecaseintro}
Let $\cE$ be a $T$-equivariant locally free and sufficiently smooth strictly semistable sheaf
on $(X,L)$. Then there is $\ep_0>0$ such that for all $\ep\in (0,\ep_0)\cap \Q$, the
reflexive pullback $\cE':=(\pi^*\cE)^{\vee \vee}$ on $(X', L_\ep)$ is:
 \begin{itemize}
  \item[(i)] stable iff for all $\cF\in\fE$, $\mu_{0} ((\pi^\ast \cF)^{\vee \vee})< \mu_{0} (\cE')$,
  \item[(ii)] strictly semistable iff for all $\cF\in\fE$, $\mu_{0} ((\pi^\ast \cF)^{\vee \vee})\leq \mu_{0} (\cE')$ with at least one equality,
  \item[(iii)] unstable iff there is one $\cF\in\fE$ with $\mu_{0} ((\pi^\ast \cF)^{\vee \vee})> \mu_{0} (\cE')$.
 \end{itemize}
\end{theorem}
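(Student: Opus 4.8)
The plan is to reduce stability of $\cE'$ on $(X',L_\ep)$ to slope inequalities against a finite, well-understood family of test subsheaves, and then to expand these inequalities in $\ep$. The first step is to invoke the equivariant criterion (Ko01/Koo11, HNS19): since $\cE'=(\pi^*\cE)^{\vee\vee}$ is $T'$-equivariant (for the torus $T'$ of $X'$), to decide (semi)stability it suffices to test the slope inequality $\mu_{L_\ep}(\cF')\ (\leq)\ \mu_{L_\ep}(\cE')$ on equivariant saturated reflexive subsheaves $\cF'\subseteq\cE'$. Next, I would use the adiabatic expansion: for any torsion-free $\cG$ on $X'$, $\mu_{L_\ep}(\cG)$ expands as a Laurent-type polynomial in $\ep$ whose leading coefficient is controlled by the fibre-direction degree and whose next term recovers $\mu_L(\pi_*\cG)$-type data on the base (this is exactly the mechanism already used for Theorem~\ref{theo:stablecaseintro} and Proposition~\ref{prop:unstablecaseintro}). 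The key reduction, and where the local freeness and sufficient smoothness hypotheses enter, is to show that among all equivariant saturated reflexive subsheaves $\cF'$, the ones that can possibly obstruct stability — i.e.\ those with $\mu_0(\cF')\geq\mu_0(\cE')$ — are precisely (reflexive pullbacks of) the subsheaves coming from a Jordan--Hölder filtration of $\cE$, i.e.\ the members of $\fE$.

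To establish this reduction I would argue as follows. Because $\cE$ is locally free and semistable with locally free graded object, a general equivariant saturated reflexive subsheaf $\cF'\subseteq\cE'$ restricts over the generic point of $X$ (equivalently, after pushing forward by $\pi$ and saturating) to an equivariant saturated subsheaf $\cF\subseteq\cE$; by semistability of $\cE$ on $(X,L)$ one has $\mu_L(\cF)\leq\mu_L(\cE)$, and this is the leading ($\ep^0$ up to the common fibre factor) term in the expansion of $\mu_{L_\ep}(\cF')-\mu_{L_\ep}(\cE')$. If this inequality is strict, then $\mu_0(\cF')<\mu_0(\cE')$ and $\cF'$ is harmless. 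If it is an equality, then $\cF$ is a subsheaf of $\cE$ of the same slope, hence (using that $\cE$ is locally free and the graded pieces are stable of the same slope) $\cF$ must be, up to saturation, one of the terms $\cE_i$ of a Jordan--Hölder filtration — this is the standard fact that slope-$\mu_L(\cE)$ subsheaves of a semistable sheaf are ``JH subsheaves''. Moreover, in the equality case the comparison $\mu_{L_\ep}(\cF')$ versus $\mu_{L_\ep}((\pi^*\cF)^{\vee\vee})$ is governed by the lower-order terms, and one checks that $\cF'\mapsto(\pi^*\cF)^{\vee\vee}$ does not decrease the slope at the relevant order (reflexive pullback along $\pi$ is compatible with saturation on the fibres), so it suffices to test against $(\pi^*\cF)^{\vee\vee}$ with $\cF\in\fE$. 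Since $\fE$ is finite, there is a uniform $\ep_0>0$ below which the sign of each expansion $\mu_{L_\ep}((\pi^*\cF)^{\vee\vee})-\mu_{L_\ep}(\cE')$ is constant and equal to the sign of its leading nonzero coefficient, i.e.\ to the sign prescribed by $\mu_0$. Assembling: $\cE'$ is stable on $(X',L_\ep)$ for $\ep<\ep_0$ iff every test $\cF'$ has $\mu_{L_\ep}(\cF')<\mu_{L_\ep}(\cE')$ iff every $\cF\in\fE$ has $\mu_0((\pi^*\cF)^{\vee\vee})<\mu_0(\cE')$, which is (i); the cases (ii) and (iii) follow the same pattern with $\leq$ and $>$ respectively, the ``at least one equality'' in (ii) being forced because $\cE$ is strictly semistable so $\fE\neq\varnothing$ and the top non-trivial member $\cE_{\ell-1}$ yields a genuine equality at leading order.

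The main obstacle I expect is the reduction step, specifically controlling arbitrary equivariant saturated reflexive subsheaves $\cF'\subseteq\cE'$ that do \emph{not} obviously come from the base. One has to rule out ``new'' destabilising subsheaves supported in fibre directions, and this is exactly what the hypotheses buy: local freeness of $\cE$ (so that $\cE'$ is genuinely $(\pi^*\cE)$ with no correction away from codimension $\geq 2$), and local freeness of $\Gr(\cE)$ (so that the associated graded behaves well under reflexive pullback and the JH picture is preserved). Concretely, the technical heart is a lemma — presumably proved in the body of the paper using the toric/combinatorial description of equivariant reflexive sheaves via filtrations (Klyachko-type data) — stating that for $\ep$ small the equivariant saturated reflexive subsheaves of $\cE'$ that achieve $\mu_0(\cF')\geq\mu_0(\cE')$ are, at the level of generic behaviour on $X$ and up to the relevant order in $\ep$, exhausted by $\{(\pi^*\cF)^{\vee\vee}:\cF\in\fE\}$. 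Granting that lemma, the remainder is the finiteness-plus-expansion bookkeeping sketched above.
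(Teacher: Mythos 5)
Your architecture is the paper's: reduce to equivariant saturated reflexive subsheaves, note these are exactly the $\cE'_F$ for vector subspaces $F\subseteq E$ (the underlying vector space is unchanged by reflexive pullback), expand slopes in $\ep$, dispose of the $F$ with $\mu_L(\cE_F)<\mu_L(\cE)$ by the leading term plus finiteness, and handle the equal-slope $F$ (i.e.\ $\cE_F\in\fE$) by the $\mu_0$ comparison. The one substantive step you defer is also the only place the hypotheses are used, and your description of it has the direction backwards. The saturated test object is $\cE'_F$, which is the \emph{saturation} of $(\pi^*\cE_F)^{\vee\vee}$ inside $\cE'$; saturation can only increase degree, so $\mu_{L_\ep}(\cE'_F)\geq\mu_{L_\ep}((\pi^*\cE_F)^{\vee\vee})$ always. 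The danger is therefore not that passing to $(\pi^*\cF)^{\vee\vee}$ "decreases the slope", but that it \emph{underestimates} the slope of the genuine subsheaf $\cE'_F$, in which case the $\mu_0$ criterion on $(\pi^*\cF)^{\vee\vee}$ would certify stability while $\cE'_F$ actually destabilises. What must be proved is equality, i.e.\ that $\pi^*\cE_F$ is already saturated in $\pi^*\cE$ for $\cE_F\in\fE$. The paper does this in Lemma \ref{lem:pullback-saturated-sheaf} by a short Tor-vanishing argument: if $\cE$ and $\cE/\cF$ are locally free then $\Tor_1^{\pi^{-1}\cO_X}(\pi^{-1}(\cE/\cF),\cO_{X'})=0$, so $0\to\pi^*\cF\to\pi^*\cE\to\pi^*(\cE/\cF)\to 0$ is exact with locally free quotient; this is exactly what local freeness of $\cE$ and of $\Gr(\cE)$ buy, and Example \ref{ex:nonsaturated} shows the conclusion genuinely fails without them. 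A second, minor point: in (ii) the "at least one equality" is part of the characterisation, not something forced by $\fE\neq\varnothing$ — for $\cF\in\fE$ only the \emph{leading} coefficient of $\mu_{L_\ep}(\cE')-\mu_{L_\ep}((\pi^*\cF)^{\vee\vee})$ vanishes automatically, while $\mu_0$ is read off from the first nonvanishing coefficient, which can have either sign; (ii) simply follows from (i) and (iii) by trichotomy.
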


This theorem should be compared to \cite[Theorem 1.4]{SekTip22}, where a similar result is obtained, in a non necessarily toric setting, for pullbacks of strictly semistable vector bundles on holomorphic submersions.
The approach to the problem in \cite{SekTip22} is fairly different, with differential geometric techniques, and requires some additional technical assumptions on $\Gr(\cE)$. Working in the toric setting, by mean of combinatorial and algebraic methods, we are able to extend the results from \cite{SekTip22} to more general fibrations, allowing singularities on $X$ and $X'$, and fibers with multiple irreducible components. 
\begin{rem}
We will see in Section \ref{sec:loctrivialfibration} that if $\pi$ is a locally trivial fibration, our assumption on $\cE$ and $\Gr(\cE)$ to be locally free in Theorem \ref{theo:semistablecaseintro} is not necessary. In that situation, we can remove all the technical hypothesis that were required in \cite[Theorem 1.4]{SekTip22}.
\end{rem}
\begin{rem}
  In \cite{CT22}, a closely related problem is considered. Let $(X',L')$ be a normal toric variety and $X=X'//G$ a GIT quotient under the action of a generic subtorus $G$ of the torus of $X'$. Denote by $\iota : X^s \to X'$ the  inclusion of the stable locus under this action, and $\pi : X^s \to X$ the quotient map. Then a combinatorial condition on $(X',L',G)$ ensures that there is an ample class $\alpha$ on $X$ such that for {\it any} torus equivariant reflexive sheaf $\cE$ that is stable on $(X,\alpha)$, the sheaf $(\iota_*(\pi^*\cE))^{\vee \vee}$ is stable on $(X',L')$ \cite{CT22}. Toric locally trivial fibrations can be seen as GIT quotients. However, the results presented in this paper are different in nature from the ones in \cite{CT22}. In the present situation, we fix a {\it single} sheaf and study the stability of its pullback for adiabatic polarisations, whereas in \cite{CT22}, the set of {\it all} pulled back reflexive equivariant sheaves is considered at once, for non necessarily adiabatic polarisations.
\end{rem}

\begin{rem}
In fact, we will see in Section \ref{sec:families} that our results hold for specific flat
families of equivariant reflexive sheaves. More precisely, if $(\cE_s)_{s\in S}$ is a family
of stable equivariant reflexive sheaves over $(X,L)$ with either
\begin{enumerate}[label=(\Roman*)]
\item $\cE$ is locally free on $X \times S$, or
\item the characteristic function $(\chi(\cE_s))_{s \in S}$ is constant,
\end{enumerate}
then the $\ep_0$ in Theorem \ref{theo:stablecaseintro} can be taken uniformly for
$(\cE_s)_{s\in S}$. Similarly, if we assume all $\cE_s$ to be sufficiently smooth, the
$\ep_0$ in Theorem \ref{theo:semistablecaseintro} can be taken uniformly in $s\in S$ provided
one of conditions (I) or (II) above is satisfied. As a corollary, we will see that the
reflexive pullback induces injective maps between the relevant moduli spaces of stable
equivariant sheaves. As those moduli spaces arise as fixed point loci under the torus action
on moduli spaces of reflexive sheaves on toric varieties \cite{Koo11}, we hope to extract
more information from those injective maps, at least for some simple fibrations.
\end{rem}

We then specify our results to various types of toric fibrations.
The first one that we address in Section \ref{sec:loctrivialfibration} is when $X'=X$ and
$\pi$ is the identity. The only modification comes then from the change in polarisation
from $L$ to $L + \ep L'$. As noticed in \cite{SekTip22}, in that case, our result already
provides interesting information on the behaviour of a semistable reflexive sheaf when the
polarisation varies. On a global level, moduli spaces of stable sheaves are subject to modifications related to wall-crossing phenomena in the ample cone (see \cite[Chapter 4, Section C]{HuLe} and reference therein for results on variations of moduli spaces of stable bundles on surfaces). Restricting to a single semistable reflexive sheaf $\cE$, Theorem \ref{theo:semistablecaseintro} gives a simple and effective criterion on perturbations of the polarisation that send $\cE$ to the stable locus.
As an illustration, we describe in Section \ref{sec:loctrivialfibration} stable perturbations of the tangent sheaf of a normal Del Pezzo surface, which is strictly semistable with respect to the anticanonical polarisation.

Another case of interest is when $\pi: X'\to X$ is an equivariant blow-up along a torus invariant subvariety $Z\subseteq X$ and $L_\ep = \pi^{\ast}L- \ep E$ where $E$ is the exceptional divisor of $\pi$. Assuming $X$ to be smooth, we push further the study of pulling back semistable sheaves under that setting in Section \ref{sec:blowups}. In  particular, if $S$ is a set of invariant points under the torus action of $X$, we obtain (see Section \ref{sec:blowup-along-point}):

\begin{theorem}\label{theo:blowuppoint}
Let $(X,L)$ be a smooth polarised toric variety and $S$ a set of invariant points under the
torus action of $X$. Let $\pi: X' \to X$ be the blow-up along $S$ and let
$L_{\ep}=\pi^*L-\ep E$ for $E$ the exceptional divisor of $\pi$. Let $\cE$ be a
$T$-equivariant reflexive sheaf that is strictly semistable on $(X,L)$. Then there is
$\ep_0>0$ such that for all $\ep\in (0,\ep_0)\cap \Q$, the reflexive pullback
$\cE':=(\pi^*\cE)^{\vee \vee}$ on $(X', L_\ep)$ is 
 \begin{itemize}
  \item[(i)] strictly semistable iff for any  subsheaf $\cF\in\fE$, $(\pi^*\cF)^{\vee \vee}$ is saturated in $\cE'$,
  \item[(ii)] unstable otherwise.
 \end{itemize}
\end{theorem}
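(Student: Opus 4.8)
The plan is to apply Theorem \ref{theo:semistablecaseintro} in the special case where $\pi$ is the blow-up of $X$ along a finite set $S$ of torus-fixed points, with $L_\ep = \pi^*L - \ep E$. First I would reduce to the locally free situation: since $X$ is smooth, an equivariant reflexive sheaf $\cE$ on $X$ fails to be locally free only on a set of codimension $\geq 3$, hence only at torus-fixed points, and similarly the graded object $\Gr(\cE)$ is reflexive hence locally free away from the fixed points. The blow-up locus $S$ is itself a union of fixed points, so after possibly enlarging $S$ to contain the (finitely many) points where $\cE$ or $\Gr(\cE)$ fail to be locally free, one is in position to invoke Theorem \ref{theo:semistablecaseintro}; I would need to check that blowing up extra fixed points does not change the conclusion, which it does not since the statement is purely about the asymptotic slope comparisons and these are unaffected on the old exceptional fibres. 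Thus it remains to translate the condition ``$\mu_0((\pi^*\cF)^{\vee\vee}) < \mu_0(\cE')$'' for $\cF \in \fE$ into the saturation condition in the statement.

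The heart of the argument is therefore a local computation of $\mu_0((\pi^*\cF)^{\vee\vee})$ for the blow-up of a smooth point. I would compute $\deg_{L_\ep}((\pi^*\cF)^{\vee\vee}) = \deg_L(\cF)\cdot(\pi^*L)^{n-1}\cdot(\text{something}) + \ep(\cdots) + \ldots$, expanding $(\pi^*L - \ep E)^{n-1}$ and using $\pi^* L \cdot E^k = 0$ for $k \geq 1$ together with the known self-intersection numbers of $E$ on a point blow-up. The key point is that $(\pi^*\cF)^{\vee\vee}$ and $\pi^*\cF$ differ only along $E$, and the correction term measures exactly the failure of $\pi^*\cF$ to be saturated inside $\cE' = (\pi^*\cE)^{\vee\vee}$: if $(\pi^*\cF)^{\vee\vee}$ is saturated in $\cE'$ then the lowest-order ($\ep^0$) coefficients of $\mu_{L_\ep}((\pi^*\cF)^{\vee\vee})$ and $\mu_{L_\ep}(\cE')$ agree (both equal to $\mu_L(\cF)=\mu_L(\cE)$ as $\cF$ comes from a Jordan--H\"older filtration), and then the next coefficient — the one governing $\mu_0$ — is forced to be $\leq$ with equality precisely in the saturated case; whereas if $(\pi^*\cF)^{\vee\vee}$ is \emph{not} saturated in $\cE'$, its saturation has strictly larger degree at order $\ep$ (the quotient being supported on $E$, which has negative self-intersection contributions of the right sign), producing a destabilising subsheaf and hence instability. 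So (i) and (ii) follow once this sign computation is pinned down: ``all $\cF$ saturated'' $\Leftrightarrow$ all $\mu_0$-inequalities are equalities $\Leftrightarrow$ strictly semistable by Theorem \ref{theo:semistablecaseintro}(ii); and ``some $\cF$ not saturated'' $\Leftrightarrow$ some $\mu_0$-inequality is reversed $\Leftrightarrow$ unstable by Theorem \ref{theo:semistablecaseintro}(iii). Note in particular that the stable case of Theorem \ref{theo:semistablecaseintro}(i) cannot occur here, which is why the trichotomy collapses to a dichotomy.

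The main obstacle I anticipate is the precise intersection-theoretic bookkeeping for $(\pi^*\cF)^{\vee\vee}$ versus the saturation of $\pi^*\cF$ in $\cE'$ on the blow-up of several fixed points: one must identify the order-$\ep$ coefficient of the slope with a genuinely geometric quantity (a length along $E$, or a colength of a subsheaf cosupported on $E$) and check it has the correct sign. This requires understanding the local structure of $(\pi^*\cF)^{\vee\vee}$ near the exceptional divisor — for a reflexive sheaf on a smooth blow-up this is controlled, but some care is needed because $\cF$ itself may not be locally free at the blown-up points after the reduction above. A secondary technical point is ensuring the uniform $\ep_0$: since $\fE$ is a finite set (there are finitely many equivariant saturated reflexive subsheaves occurring in Jordan--H\"older filtrations, by the toric/combinatorial description), one simply takes the minimum of the finitely many thresholds coming from Theorem \ref{theo:semistablecaseintro} and from the sign computation, so this presents no real difficulty.
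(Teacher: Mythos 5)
Your second paragraph is essentially the paper's argument, and it is correct in outline: for a blow-up of fixed points one shows that $\mu_{L_\ep}((\pi^*\cF)^{\vee\vee})=\mu_L(\cF)$ \emph{exactly} (the $\ep^{n-1}$-contributions of the exceptional divisors over the points cancel against the corrections to $\deg_{L_\ep}(D_\rho)$ for $\rho\in\Sigma(1)$), so the only surviving term in $\mu_{L_\ep}(\cE')-\mu_{L_\ep}(\cE'_F)$ for $\cE_F\in\fE$ is $-\frac{\ep^{n-1}}{\rk F}\sum_{p\in S}\sum_j d_j(F)$ with $d_j(F)=\dim(F\cap\widetilde E^{\rho_p}(j))-\dim\widetilde F^{\rho_p}(j)\geq 0$, which vanishes if and only if $(\pi^*\cE_F)^{\vee\vee}$ is saturated (Lemmas \ref{lem:saturated-subsheaf} and \ref{lem:c1-subsheaf-blowup}). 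This is also the precise reason stability can never occur, as you observe.

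The genuine gap is your first paragraph. The non-locally-free locus of an equivariant reflexive sheaf on smooth $X$ has codimension $\geq 3$, which is \emph{not} a finite set of fixed points once $\dim X\geq 4$: it is a union of orbit closures $V(\sigma)$ with $\dim\sigma\geq 3$ and may be positive-dimensional. More fundamentally, enlarging $S$ changes $X'$, $E$ and $L_\ep$ and hence the statement to be proved; and blowing up points of $X$ does not alter $\cE$ as a sheaf \emph{on $X$}, so the hypotheses of Theorem \ref{theo:semistablecaseintro} (local freeness of $\cE$ and $\Gr(\cE)$ on $X$) remain unverified. Indeed, if those hypotheses held, Lemma \ref{lem:pullback-saturated-sheaf} would force every $(\pi^*\cF)^{\vee\vee}$ to be saturated and case (ii) could never occur — that is exactly Corollary \ref{cor:sufficientlysmoothblowuppoint} — whereas the content of Theorem \ref{theo:blowuppoint} is precisely the non-sufficiently-smooth case (cf. Example \ref{ex:nonsaturated}). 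Also, $\Gr(\cE)$ is only torsion-free in general, not reflexive. The repair is to drop the reduction entirely: Proposition \ref{prop:slope-with-saturated-sheaf} and Lemma \ref{lem:slope-number-vector-space} reduce the test to finitely many equivariant saturated subsheaves $\cE'_F$ for \emph{arbitrary} reflexive $\cE$, the decomposition (\ref{eq:slope-pullback-fib}) eliminates the $F$ with $\mu_L(\cE_F)<\mu_L(\cE)$ for small $\ep$, and your paragraph-two computation handles the remaining $F$ with $\cE_F\in\fE$; no appeal to Theorem \ref{theo:semistablecaseintro} is needed or possible.
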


\begin{cor}
 \label{cor:sufficientlysmoothblowuppoint}
 With the notations of Theorem \ref{theo:blowuppoint}, if $\cE$ is sufficiently smooth, then $\cE'$ satisfies $(i)$ and thus is strictly semistable on $(X', L_\ep)$ for $\ep \ll 1$.
\end{cor}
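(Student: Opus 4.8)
The plan is to verify directly the criterion $(i)$ of Theorem \ref{theo:blowuppoint}: once we know that $(\pi^\ast\cF)^{\vee\vee}$ is saturated in $\cE'$ for every $\cF\in\fE$, the strict semistability of $\cE'$ on $(X',L_\ep)$ for $\ep\ll 1$ is immediate. The whole point is that the sufficient smoothness of $\cE$ forces each $\cF\in\fE$, and the corresponding quotient $\cE/\cF$, to be locally free; saturatedness of the pullback then drops out for free.

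First I would make the local freeness explicit. By definition, $\cE$ sufficiently smooth means $\cE$ is locally free and $\Gr(\cE)=\bigoplus_i\cE_{i+1}/\cE_i$ is locally free for one, hence --- by the Jordan--H\"older theorem, since the graded object is independent of the chosen filtration --- for every Jordan--H\"older filtration $0=\cE_1\subseteq\cdots\subseteq\cE_\ell=\cE$. Given $\cF\in\fE$, choose such a filtration with $\cF=\cE_k$. Each stable quotient $\cE_{j+1}/\cE_j$ is a direct summand of the locally free sheaf $\Gr(\cE)$, hence locally free; an ascending induction on $j$, using that an extension $0\to\cE_j\to\cE_{j+1}\to\cE_{j+1}/\cE_j\to 0$ of locally free sheaves is again locally free, shows that every $\cE_j$ is locally free, in particular $\cF$. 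Dually, $\cE/\cF=\cE_\ell/\cE_k$ is an iterated extension of the locally free quotients $\cE_{j+1}/\cE_j$ for $j\geq k$, hence is itself locally free.

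Finally, since $\pi$ is a morphism of varieties, the pullback of a locally free sheaf is locally free, so $\pi^\ast\cF$, $\pi^\ast\cE$ and $\pi^\ast(\cE/\cF)$ are all locally free; being reflexive, they satisfy $(\pi^\ast\cF)^{\vee\vee}=\pi^\ast\cF$ and $\cE'=(\pi^\ast\cE)^{\vee\vee}=\pi^\ast\cE$. Pulling back the short exact sequence $0\to\cF\to\cE\to\cE/\cF\to 0$, which is a short exact sequence of locally free sheaves and hence locally split, yields a short exact sequence $0\to\pi^\ast\cF\to\cE'\to\pi^\ast(\cE/\cF)\to 0$ with locally free, in particular torsion-free, quotient. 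Therefore $(\pi^\ast\cF)^{\vee\vee}=\pi^\ast\cF$ is saturated in $\cE'$; as $\cF\in\fE$ was arbitrary, Theorem \ref{theo:blowuppoint}$(i)$ gives the claim. There is essentially no obstacle here; the only point deserving care is that $\fE$ gathers subsheaves coming from possibly different Jordan--H\"older filtrations, so one genuinely uses that the graded object --- and hence the locally free property of each stable factor, and then of each $\cE_j$ --- does not depend on the filtration.
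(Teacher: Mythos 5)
Your proposal is correct and follows essentially the same route as the paper: the paper deduces the corollary from Lemma \ref{lem:pullback-saturated-sheaf}, which shows that $\pi^\ast\cF$ is saturated in $\pi^\ast\cE$ as soon as $\cE$ and $\cE/\cF$ are locally free (via vanishing of $\Tor_1$ against the locally free quotient), exactly the situation you reduce to. Your only additions are cosmetic: you make explicit the (correct) bookkeeping that sufficient smoothness forces each $\cE_j$ and each $\cE/\cE_j$ in any Jordan--H\"older filtration to be locally free, and you replace the $\Tor$ argument by local splitting of the exact sequence.
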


\begin{rem}
Corollary \ref{cor:sufficientlysmoothblowuppoint}, together with Theorem \ref{theo:stablecaseintro}, show that blowing-up points strictly preserves (semi)stability of a sufficiently smooth vector bundle for adiabatic polarisations.
However, in general, the reflexive pullback of a saturated subsheaf might not be saturated, see Example \ref{ex:nonsaturated}. Hence, pulling back along a single point blow-up  might ``destabilize'' a semistable reflexive sheaf.
\end{rem}
Hence, for adiabatic polarisations, blowing-up a point will never push a strictly semistable toric sheaf to the stable locus. This is no longer true if we blow-up higher dimensional varieties.
In Section \ref{sec:blowup-along-cure}, we prove :

\begin{theorem}\label{theo:blowupcurve}
Let $(X,L)$ be a smooth polarised toric variety. Let $\pi: X' \to X$ be the blow-up along
a $T$-invariant irreducible curve $Z \subseteq X$ and let $L_{\ep}=\pi^*L-\ep E$ for $E$ the
exceptional divisor of $\pi$. Let $\cE$ be a $T$-equivariant reflexive sheaf that is
strictly semistable on $(X,L)$. Then there is $\ep_0>0$ such that for all
$\ep\in (0,\ep_0)\cap \Q$, the pullback $\cE':=(\pi^*\cE)^{\vee \vee}$ on $(X', L_\ep)$ is
\begin{itemize}
 \item[(i)] stable iff for all $\cF\in \fE$, $(\pi^\ast \cF)^{\vee \vee}$ is saturated in
 $\cE'$ and $$\dfrac{c_1(\cE) \cdot Z}{\rk \cE} < \dfrac{c_1(\cF) \cdot Z}{\rk \cF};$$
 \item[(ii)] semistable iff for all $\cF\in\fE$, $(\pi^\ast \cF)^{\vee \vee}$ is
 saturated in $\cE'$ and $$\dfrac{c_1(\cE) \cdot Z}{\rk \cE} \leq \dfrac{c_1(\cF) \cdot Z}{\rk \cF};$$
 \item[(iii)] unstable otherwise.
 \end{itemize}
\end{theorem}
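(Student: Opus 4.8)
The plan is to reduce, as in Theorem~\ref{theo:semistablecaseintro}, to equivariant saturated reflexive test subsheaves, and then to run an explicit asymptotic analysis of slopes along $E$; the point of working with a curve inside a smooth $X$ (so that $X'$ is smooth and $\Pic(X')=\pi^*\Pic(X)\oplus\Z E$) is that everything can be computed by hand, so no local freeness of $\cE$ or of $\Gr(\cE)$ is needed. First I would note that $L_\epsilon=\pi^*L-\epsilon E$ is ample for $\epsilon\in(0,\epsilon_0)\cap\Q$, $\epsilon_0\ll1$, and that $\cE'$ is $T$-equivariant reflexive. By \cite{Koo11,HNS19}, $\cE'$ is (semi)stable on $(X',L_\epsilon)$ iff $\mu_{L_\epsilon}(\cF')\ (\leq)\ \mu_{L_\epsilon}(\cE')$ for every $T$-equivariant saturated reflexive $\cF'\subseteq\cE'$. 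As $\pi$ is an isomorphism over $X\setminus Z$ and $\codim_XZ\geq2$, restriction to $X'\setminus E\cong X\setminus Z$ followed by reflexive extension yields a rank-preserving bijection between these $\cF'$ and the $T$-equivariant saturated reflexive $\cF\subseteq\cE$; under it $\cF'$ is the saturation $\widetilde\cF\subseteq\cE'$ of $(\pi^*\cF)^{\vee\vee}$, so that $c_1(\widetilde\cF)=c_1((\pi^*\cF)^{\vee\vee})+m_\cF E$ with $m_\cF\in\Z_{\geq0}$; using that $(\pi^*\cF)^{\vee\vee}$ and $\widetilde\cF$ are reflexive, hence determined off codimension $2$, one gets $m_\cF=0$ iff $(\pi^*\cF)^{\vee\vee}$ is already saturated in $\cE'$. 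I would also record the auxiliary fact $c_1((\pi^*\cG)^{\vee\vee})=\pi^*c_1(\cG)$ for the equivariant reflexive sheaves at hand, proved by a local computation confined to $\pi^{-1}(\mathrm{Sing}(\cG))$, and note that, $\cE$ being semistable, among the $\cF$ above one has $\mu_L(\cF)=\mu_L(\cE)$ exactly for $\cF\in\fE$ (a subsheaf of the same slope, and its quotient, are semistable of the same slope, and splicing Jordan--H\"older filtrations puts such $\cF$ in an equivariant Jordan--H\"older filtration of $\cE$).

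The heart is the $\epsilon$-expansion of $\deg_{L_\epsilon}$ along $E$. Writing $n=\dim X$ and using $E|_E$, the $\P^{\,n-2}$-bundle $p:E\to Z$, and the vanishings forced by $\dim Z=1$ (so $\pi_*[E]=0$ and $(L|_Z)^2=0$), a direct intersection computation gives, for any coherent $\cG$ on $X'$ with $c_1(\cG)=\pi^*D_\cG+b_\cG E$,
\[
\deg_{L_\epsilon}(\cG)=\big(D_\cG\cdot L^{n-1}\big)+(n-1)(L\cdot Z)\,b_\cG\,\epsilon^{\,n-2}+\big(\sigma\,b_\cG-D_\cG\cdot Z\big)\,\epsilon^{\,n-1},
\]
with $\sigma$ a fixed integer depending only on $N_{Z/X}$ and all intermediate powers of $\epsilon$ absent. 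Dividing by ranks and inserting $c_1(\cE')=\pi^*c_1(\cE)$ and $c_1(\widetilde\cF)=\pi^*c_1(\cF)+m_\cF E$, for $\cF\in\fE$ the $\epsilon^0$-coefficient of $\mu_{L_\epsilon}(\cE')-\mu_{L_\epsilon}(\widetilde\cF)$ vanishes (both equal $\mu_L(\cE)$ at order $0$), its $\epsilon^{\,n-2}$-coefficient equals $-(n-1)(L\cdot Z)\,m_\cF/\rk\cF$, and — when this vanishes, i.e. $m_\cF=0$ — its $\epsilon^{\,n-1}$-coefficient equals $\frac{c_1(\cF)\cdot Z}{\rk\cF}-\frac{c_1(\cE)\cdot Z}{\rk\cE}$ (the $\sigma$-term is proportional to $m_\cF$, so it disappears exactly when the previous coefficient does). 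As $L\cdot Z>0$, the sign of the leading nonzero term of $\mu_{L_\epsilon}(\cE')-\mu_{L_\epsilon}(\widetilde\cF)$ is governed first by whether $(\pi^*\cF)^{\vee\vee}$ is saturated in $\cE'$, and then, if it is, by the comparison of $\frac{c_1(\cE)\cdot Z}{\rk\cE}$ with $\frac{c_1(\cF)\cdot Z}{\rk\cF}$.

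It remains to organise the case analysis uniformly in $\epsilon$. For $\cF\notin\fE$ one has $\mu_L(\cF)<\mu_L(\cE)$, so the $\epsilon^0$-coefficient of $\mu_{L_\epsilon}(\cE')-\mu_{L_\epsilon}(\widetilde\cF)$ is strictly positive and such test sheaves never obstruct stability once $\epsilon$ is small; to make ``small'' independent of the (infinitely many) test sheaves I would invoke the boundedness of the numerical invariants of equivariant saturated reflexive subsheaves of $\cE$ (finitely many possible $(\rk,c_1)$, cf.\ \cite{Koo11} and the estimates used for Theorems~\ref{theo:stablecaseintro} and \ref{theo:semistablecaseintro}), which bounds all the coefficients above and bounds the slope gaps below, producing a single $\epsilon_0$. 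Then, for $\epsilon\in(0,\epsilon_0)\cap\Q$: $\cE'$ is unstable as soon as some $\cF\in\fE$ has $(\pi^*\cF)^{\vee\vee}$ non-saturated in $\cE'$ (negative $\epsilon^{\,n-2}$-term) or, all of them being saturated, as soon as $\frac{c_1(\cE)\cdot Z}{\rk\cE}>\frac{c_1(\cF)\cdot Z}{\rk\cF}$ for some such $\cF$ (negative $\epsilon^{\,n-1}$-term); $\cE'$ is stable iff all $(\pi^*\cF)^{\vee\vee}$, $\cF\in\fE$, are saturated and $\frac{c_1(\cE)\cdot Z}{\rk\cE}<\frac{c_1(\cF)\cdot Z}{\rk\cF}$ for all of them; the remaining configuration is strictly semistable. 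This is precisely the trichotomy (i)--(iii).

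The step I expect to be most delicate is the intersection-theoretic expansion together with the auxiliary $c_1$-lemma: one must check that reflexive pullback introduces no $E$-component into $c_1$, so that the $\epsilon^{\,n-2}$-coefficient genuinely detects only the saturation defect $m_\cF$, and that at order $\epsilon^{\,n-1}$ the $\sigma$-contribution drops out, leaving only $c_1\cdot Z$; the secondary difficulty is the uniformity of $\epsilon_0$ over all test subsheaves, which rests on the toric boundedness input.
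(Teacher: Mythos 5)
Your proposal is correct and follows essentially the same route as the paper: reduce to equivariant saturated reflexive test subsheaves, expand $\deg_{L_\ep}$ exactly in $\ep$ for divisors of the form $\pi^*D+bE$ (the $\ep^{n-2}$ term detecting the $E$-coefficient, the $\ep^{n-1}$ term producing $c_1\cdot Z$), identify the $E$-coefficient of $c_1$ of the saturation with the saturation defect of $(\pi^*\cF)^{\vee\vee}$, and conclude by the sign of the leading nonzero coefficient uniformly over the finitely many relevant numerical invariants; this is exactly Proposition \ref{prop:slope-pullback-blowup}, Lemma \ref{lem:c1-subsheaf-blowup} and Formula (\ref{eq:stability-blowup-curve}). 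The only point where your sketch is lighter than the paper is the identity $c_1((\pi^*\cG)^{\vee\vee})=\pi^*c_1(\cG)$ (and the nonnegativity and vanishing criterion for $m_\cF$), which when $Z$ lies in the singular locus of $\cG$ is not a pure codimension count but is settled in the paper by the explicit Klyachko filtration of the reflexive pullback along the exceptional ray (Proposition \ref{prop:filt-toric-blowup}).
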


\begin{rem}
In Theorem \ref{theo:blowupcurve}, if $\cE$ is sufficiently smooth, then for all $\cF\in\fE$,
$(\pi^\ast \cF)^{\vee \vee}$ is saturated in $\cE'$ (cf. Lemma
\ref{lem:pullback-saturated-sheaf}). In that case, to study stability of $\cE'$ on
$(X', L_\ep)$ for $\ep \ll 1$, it is enough to compare the intersection numbers
$\frac{c_1(\cE) \cdot Z}{\rk \cE}$ and $\frac{c_1(\cF) \cdot Z}{\rk \cF}$ for $\cF$ in the
finite set $\fE$. We provide in Section \ref{sec:example-picard-rank2} an explicit
semistable example, namely the tangent sheaf of a Picard rank $2$ toric variety, that
becomes stable when pulled back to the blow-up along a curve. 
\end{rem}

Theorem \ref{theo:stablecaseintro} and Theorem \ref{theo:blowuppoint} recover and generalize
some of the results in \cite{Buch} and \cite{DerSek} on pullbacks of stable bundles along
blow-ups of points. While we restrict ourselves to toric varieties, our results cover the
cases of stable reflexive sheaves and semistable sufficiently smooth vector bundles. In
comparison, in \cite{Buch} the base manifold is a surface, while in \cite{DerSek}, the method
is via a gluing construction for Hermite-Einstein metrics, providing more precise information
on the behaviour of the metrics when $\ep\to 0$, but with a restriction to stable bundles.
The closer result in \cite[Proposition 5.1]{Graf-MR-for-linear-system} is more general than
our Corollary \ref{cor:sufficientlysmoothblowuppoint} as it deals with pullbacks of
semistable torsion-free sheaves over normal projective varieties, but Theorem
\ref{theo:blowuppoint} seems to provide more information when $\cE$ is not sufficiently
smooth.  On the other hand, Theorem \ref{theo:blowupcurve} seems to be, to the knowledge of
the authors, the first result in the direction of pushing a semistable bundle to the stable
locus by blowing-up higher dimensional sub-varieties.

The results in Theorem \ref{theo:blowupcurve} rely on a more general formula for slopes of
pullback sheaves under blow-ups. In general, if $Z\subseteq X$ is an $\ell$-dimensional
smooth subvariety of a smooth projective variety $X$ with $1 \leq \ell \leq \dim(X)-2$,
and $\cE$ is a reflexive sheaf on $X$, then, setting $\pi : X' \to X$ the blow-up along $Z$, we have:
\begin{equation}
\label{eq:formulaintro}
\mu_{L_\ep}((\pi^*\cE)^{\vee\vee})= \mu_{L}(\cE) - \dbinom{n-1}{\ell-1}
\mu_{L_{\vert Z}}(\cE_{|Z}) \ep^{n-\ell} + O(\ep^{n-\ell+1}) .
\end{equation}
This formula is quite striking as from Mehta-Ramanathan's restriction theorem, if $Z$ is
generic and an intersection of divisors coming from linear systems $H^0(X,L^{k_i})$ with
large $k_i$'s, then $\cE_{\vert Z}$ will be semistable provided $\cE$ is. Hence, in that
situation, formula (\ref{eq:formulaintro}) shows that subsheaves $\cF\subseteq\cE$ with
$\mu_L(\cF)= \mu_L(\cE)$ tend to destabilise $(\pi^*\cE)^{\vee\vee}$. Setting ourselves in
a typically non-generic situation, we can avoid this no go result. We obtain:
\begin{theorem}\label{theo:blowuphigher}
Let $(X,L)$ be a smooth polarised toric variety. Let $\pi: X' \to X$ be the blow-up along a
$T$-invariant irreducible subvariety $Z \subseteq X$ of codimension at least $2$ and let
$L_{\ep}=\pi^*L-\ep E$ for $E$ the exceptional divisor of $\pi$. Let $\cE$ be a
$T$-equivariant reflexive sheaf that is strictly semistable on $(X,L)$. Assume that for all
$\cF\in \fE$, $(\pi^\ast \cF)^{\vee \vee}$ is saturated in $\cE':=(\pi^*\cE)^{\vee \vee}$
and that
$$
 \mu_{L_{\vert Z}}(\cE_{|Z})< \mu_{L_{\vert Z}}(\cF_{|Z}).
$$
Then there is $\ep_0>0$ such that for all $\ep\in (0,\ep_0) \cap \Q$, the pullback $\cE'$
is stable on $(X', L_\ep)$.
\end{theorem}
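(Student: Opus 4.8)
The plan is to reduce the stability test for $\cE'$ to a comparison with reflexive pullbacks of subsheaves of $\cE$, and then to read off the relevant $\ep$-coefficients from the expansion~\eqref{eq:formulaintro}. Since $X$ is smooth, the $T$-invariant irreducible subvariety $Z$ is automatically smooth (orbit closures in smooth toric varieties are smooth) and $X'$ is again a smooth toric variety, so~\eqref{eq:formulaintro} applies with $\ell:=\dim Z$ and $n-\ell=\codim Z\geq 2$. By \cite{Koo11,HNS19} it then suffices to prove that $\mu_{L_\ep}(\cF')<\mu_{L_\ep}(\cE')$ for every $T$-equivariant saturated reflexive subsheaf $\cF'\subsetneq\cE'$, and for $\ep$ small.

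Given such an $\cF'$, I would push it forward and saturate: as $\pi$ contracts only the exceptional divisor $E$, which lies over the codimension-$\geq 2$ locus $Z$, and $\cE$ is reflexive, one has $\pi_*\cE'=\cE$; hence $\pi_*\cF'$ determines a $T$-equivariant subsheaf of $\cE$ whose saturation $\cF\subseteq\cE$ is again $T$-equivariant and reflexive (saturated subsheaves of a reflexive sheaf on a smooth variety are reflexive), with $\rk\cF=\rk\cF'<\rk\cE$. One checks that $(\pi^\ast\cF)^{\vee\vee}$ embeds in $\cE'$, agrees with $\cF'$ away from $E$, and that $\cF'$ is precisely its saturation in $\cE'$ (two saturated subsheaves of equal rank agreeing on a dense open coincide). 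Therefore $c_1(\cF')=c_1\big((\pi^\ast\cF)^{\vee\vee}\big)+m[E]$ for an integer $m\geq 0$, with $m=0$ if and only if $(\pi^\ast\cF)^{\vee\vee}$ is already saturated in $\cE'$. Finally, semistability of $\cE$ gives $\mu_L(\cF)\leq\mu_L(\cE)$; and if equality holds, then $\cF$ and $\cE/\cF$ are semistable of slope $\mu_L(\cE)$, so concatenating Jordan--H\"older filtrations shows $\cF\in\fE$.

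Next I would expand, using that $L_\ep$ is ample and $E$ effective, so that $[E]\cdot L_\ep^{n-1}>0$ for $\ep\in(0,\ep_0)$, with leading term a positive multiple of $\ep^{n-\ell-1}$ (this last computation via the projection formula on $E=\P(N_{Z/X})$). Applying~\eqref{eq:formulaintro} to both $\cE$ and $\cF$ yields
\begin{align*}
\mu_{L_\ep}(\cE')-\mu_{L_\ep}(\cF')
&=\big(\mu_L(\cE)-\mu_L(\cF)\big)-\frac{m}{\rk\cF}\,\big([E]\cdot L_\ep^{n-1}\big)\\
&\quad+\binom{n-1}{\ell-1}\big(\mu_{L_{|Z}}(\cF_{|Z})-\mu_{L_{|Z}}(\cE_{|Z})\big)\,\ep^{n-\ell}+O(\ep^{n-\ell+1}).
\end{align*}
If $\mu_L(\cF)<\mu_L(\cE)$, the constant term is a fixed positive number while the $m$-term is $O(\ep^{n-\ell-1})\to 0$, so the right-hand side is positive for $\ep$ small. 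If $\mu_L(\cF)=\mu_L(\cE)$, then $\cF\in\fE$, so by hypothesis $(\pi^\ast\cF)^{\vee\vee}$ is saturated in $\cE'$ and $m=0$; the expansion reduces to $\binom{n-1}{\ell-1}\big(\mu_{L_{|Z}}(\cF_{|Z})-\mu_{L_{|Z}}(\cE_{|Z})\big)\ep^{n-\ell}+O(\ep^{n-\ell+1})$, whose leading coefficient is strictly positive by the assumption $\mu_{L_{|Z}}(\cE_{|Z})<\mu_{L_{|Z}}(\cF_{|Z})$. In both cases $\mu_{L_\ep}(\cF')<\mu_{L_\ep}(\cE')$ for $\ep$ small.

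To produce a single threshold $\ep_0$ valid for all $\cF'$ at once, I would invoke boundedness as in the proof of Theorem~\ref{theo:stablecaseintro}: the $T$-equivariant saturated reflexive subsheaves $\cF'$ that could destabilize $\cE'$ for some small $\ep$ form a bounded family, so only finitely many first Chern classes — hence finitely many of the expansions above — occur, and $\ep_0$ is the minimum of the resulting finitely many thresholds. I expect the main difficulty to lie not in the numerical part (which, given~\eqref{eq:formulaintro}, is exactly governed by the hypothesis: the saturation assumption kills the potentially negative $\ep^{n-\ell-1}$ contribution, and the strict inequality on restricted slopes then makes the $\ep^{n-\ell}$ coefficient positive), but in the two bookkeeping steps: identifying $\cF'$ with the saturation of $(\pi^\ast\cF)^{\vee\vee}$ so that the sign $m\geq 0$ is controlled, and making the boundedness uniform in $\ep$.
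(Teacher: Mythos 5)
Your proof is correct and follows essentially the same route as the paper: the paper deduces Theorem \ref{theo:blowuphigher} from the argument for Theorem \ref{theo:blowupcurve}, i.e.\ expand $\mu_{L_\ep}(\cE')-\mu_{L_\ep}(\cE'_F)$ via Formula (\ref{eq:formulaintro}) together with the non-negative $D_0$-correction of Lemma \ref{lem:c1-subsheaf-blowup}, split into the cases $\mu_L(\cF)<\mu_L(\cE)$ and $\mu_L(\cF)=\mu_L(\cE)$ (where the saturation hypothesis kills the $\ep^{n-\ell-1}$ term and the restricted-slope inequality makes the $\ep^{n-\ell}$ coefficient positive), and take a uniform $\ep_0$ by finiteness. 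The only cosmetic difference is that you set up the correspondence $\cF'\leftrightarrow\cF$ by pushforward and saturation, whereas the paper gets it for free from the family-of-filtrations dictionary ($\widetilde E=E$, so every saturated equivariant subsheaf of $\cE'$ is some $\cE'_F$, the saturation of $(\pi^*\cE_F)^{\vee\vee}$), and the uniformity is Lemma \ref{lem:slope-number-vector-space} rather than a general boundedness statement.
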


Finally, Theorem \ref{theo:stablecaseintro} has another consequence on resolution of
singularities. An application of Hironaka's resolution of indeterminacy locus shows that
for a given equivariant reflexive sheaf $\cE$ on $X$, there is a finite sequence of
blow-ups along smooth irreducible torus invariant centers $\pi_i : X_i \to X_{i-1}$ for
$1 \leq i \leq p$ with $X_0=X$ such that, if we set $\cE_i=(\pi_i^*\cE_{i-1})^{\vee \vee}$,
the sheaf $\cE':=\cE_p$ is locally free on $X':=X_p$.
% We will call $\pi$ a {\it toric resolution} for $\cE$. 
Each map $\pi_i$ is a toric fibration, and thus we can iterate Theorem
\ref{theo:stablecaseintro}. Starting with a stable sheaf $\cE$, we then obtain a stable
locally free sheaf $\cE'$ on $(X', L')$ that is isomorphic to $\cE$ away from the
exceptional locus.
% :
% \begin{cor}
%  \label{cor:stabilityresolution}
% Assume that $\cE$ is a stable equivariant reflexive sheaf on a smooth toric variety $(X,L)$. Denote by $S$ the singular locus of $\cE$. Let $\pi: X' \to X$ be a toric resolution and $E=\pi^{-1}(S)$ its exceptional divisor. Then $((\pi^*\cE)^\vee)^\vee$ \todo{CT: check if the polarisation is correct. Check that $((\pi^*\cE)^\vee)^\vee$ corresponds to the successive reflexive pullbacks along the $\pi_i$'s.} is stable on $(X',L-\ep E)$ for $\ep \ll 1$.
% \end{cor}
%
In \cite{BS} and \cite{Sib}, a similar result is obtained, without the toric hypothesis,
but with differential geometric methods, and for a different polarisation on $X'$. The
polarisation in \cite{BS,Sib} is of the form $L+\ep H$, where $H$ is an ample divisor on
$X'$. In contrast, the polarisation $L'$, at the level of K\"ahler forms, only affects the
geometry of $X$ on a small neighborhood of the exceptional divisor. We believe that this
can be useful regarding the resolution of admissible Hermite-Einstein metrics as introduced
by Bando and Siu in \cite{BS}. We will come back to these explicit resolutions in the
forthcoming \cite{NapTip}.

\begin{rem}
We should note that while we restrict ourselves to toric varieties and equivariant sheaves,
we believe that all results in this paper should be true without the torus equivariant
assumption, on normal varieties. Nevertheless, working with equivariant structures provides
several crucial simplifications in the arguments, and enables to produce explicit examples
that might be difficult to find in general. For the sake of generality, we aim to relax our
equivariant hypothesis in future work.
\end{rem}

\subsection*{Organisation and conventions} All varieties considered in this paper are defined over the complex numbers and assumed to be normal. In Section \ref{sec:background} we recall the necessary background on toric varieties, their morphisms and their equivariant sheaves. We also recall the basics of slope stability. In Section \ref{sec:mainresult}, we prove Theorem \ref{theo:stablecaseintro}, Proposition \ref{prop:unstablecaseintro} and Theorem \ref{theo:semistablecaseintro}. We then give the first applications in the case of locally trivial fibrations. Section \ref{sec:blowups} is a more in depth study of the blow-up case, in which proofs of Theorems \ref{theo:blowuppoint},  \ref{theo:blowupcurve} and \ref{theo:blowuphigher}, as well as Corollary \ref{cor:sufficientlysmoothblowuppoint}, together with applications, are given.

\subsection*{Acknowledgments}  
The authors would like to thank Lars Martin Sektnan for stimulating discussions on this
problem, Ruadha\'i Dervan for several enlightening suggestions and Michel Brion for his
careful reading of the manuscript and his advice.
We also thank the anonymous referee for their advices that improved the exposition and the content of this paper.
The authors are partially supported by
the grants MARGE ANR-21-CE40-0011 and BRIDGES ANR--FAPESP ANR-21-CE40-0017.

\section{Background}
\label{sec:background}
In this first section we gather the necessary background about toric varieties \cite{CLS} and equivariant reflexive sheaves \cite{Kly90,Per03}.

\subsection{Toric varieties and divisors}
\label{sec:toric-varieties}
Let $N$ be a rank $n$ lattice and $M$ be its dual with pairing
$\langle \, \cdot \, , \, \cdot \, \rangle : M \times N \rightarrow \Z$. The lattice $N$ is the
lattice of one-parameter subgroups of $N \otimes_{\Z} \C^{\ast}$.
For $\K = \R ~ \text{or} ~ \C$, we define $N_{\K} = N \otimes_{\Z} \K$ and
$M_{\K} = M \otimes_{\Z} \K$.
A {\it fan} $\Sigma$ in $N_{\R}$ is a set of rational strongly convex polyhedral cones
in $N_{\R}$ such that:
\begin{itemize}
\item Each face of a cone in $\Sigma$ is also a cone in $\Sigma$;
\item The intersection of two cones in $\Sigma$ is a face of each.
\end{itemize}
We will denote $\tau \preceq \sigma$ the inclusion of a face $\tau$ in $\sigma \in \Sigma$.
A cone $\sigma$ in $N_{\R}$ is {\it smooth} if its minimal generators form part of a
$\Z$-basis of $N$. We say that $\sigma$ is {\it simplicial} if its minimal generators are
linearly independent over $\R$.
A fan $\Sigma$ is {\it smooth} (resp. {\it simplicial}) if every cone $\sigma$ in $\Sigma$
is smooth (resp. {\it simplicial}).
The {\it support} of $\Sigma$ is $|\Sigma| := \bigcup_{\sigma \in \Sigma}{\sigma}\,$.
We say that $\Sigma$ is {\it complete} if $|\Sigma| = N_{\R}$.

For $\sigma \in \Sigma$, let $U_{\sigma} = \Spec( \C[S_{\sigma}])$ where
$\C[S_{\sigma}]$ is the semi-group algebra of
$$
S_{\sigma} = \sigma^{\vee} \cap M = \{ m \in M \, : \, \langle m, \, u \rangle \geq 0
~ \text{for all}~ u \in \sigma \} ~.
$$
If $\sigma, \, \sigma' \in \Sigma$, we have $U_{\sigma} \cap U_{\sigma'} =
U_{\sigma \cap \sigma'}$.
We denote by $X_{\Sigma}$ the toric variety associated to a fan $\Sigma$;
$X_{\Sigma}$ is obtained by gluing the affine charts $(U_{\sigma})_{\sigma \in \Sigma}$.
The variety $X_{\Sigma}$ is normal and its torus is $T = N \otimes_{\Z} \C^{\ast}$.
By \cite[Theorem 3.1.19]{CLS}, the toric variety $X_\Sigma$ is {\it smooth}
(resp. $\Q$-{\it factorial}) if and only if the fan $\Sigma$ is smooth (resp. simplicial).

Let $X$ be the toric variety associated to a fan $\Sigma$ in $N_{\R}$.
For any $\sigma \in \Sigma$, there is a point $\gamma_\sigma \in U_{\sigma}$ called the
{\it distinguished point} of $\sigma$ such that the torus orbit $O(\sigma)$ corresponding to $\sigma$ is given by $O(\sigma) = T \cdot \gamma_\sigma$. We will use the following result:

\begin{theorem}[Orbit-Cone Correspondence, {\cite[Theorem 3.2.6]{CLS}}]\label{theo:orbit-cone}
$\,$
\begin{enumerate}
\item
There is a bijective correspondence
$$
\begin{array}{rcl}
\{ \text{Cones} ~ \sigma ~ \text{in} ~ \Sigma \} & \eqv & \{ T-\text{orbits in} ~ X \}
\\ \sigma & \eqv & O(\sigma)
\end{array}
$$
with
$\dim O(\sigma) = \dim N_{\R} - \dim \sigma$.
\item
The affine open subset $U_{\sigma}$ is the union of orbits
$$~ \disp{
U_{\sigma} = \bigcup_{\tau \preceq \sigma}{O(\tau)} }~.$$
\item We have
$\tau \preceq \sigma$ if and only if $O(\sigma) \subseteq\overline{O(\tau)}$. 
\item Finally,
$$~\disp{
\overline{O(\tau)} = \bigcup_{\tau \preceq \sigma}{O(\sigma)}
}~$$
where $\overline{O(\tau)}$ denotes the closure in both the classical and Zariski
topologies.
\end{enumerate}
\end{theorem}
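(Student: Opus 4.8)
The plan is the classical one: describe each affine chart $U_\sigma$ explicitly in terms of the torus action, and then patch the charts together. First I recall that a point of $U_\sigma=\Spec(\C[S_\sigma])$ is the same thing as a semigroup homomorphism $x\colon S_\sigma\to(\C,\cdot)$, that $T=N\otimes_\Z\C^\ast=\Hom(M,\C^\ast)$ acts on such points by $(t\cdot x)(m)=t(m)\,x(m)$, and that the distinguished point $\gamma_\sigma$ is the homomorphism sending $m\in S_\sigma$ to $1$ if $m\in\sigma^\perp\cap M$ and to $0$ otherwise. This is a well-defined semigroup homomorphism precisely because $\sigma^\perp\cap M$ is a face of the semigroup $S_\sigma$. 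Since $O(\sigma)=T\cdot\gamma_\sigma$ by definition, the whole statement amounts to understanding these orbits and their closures.

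For the dimension formula in (1) I would compute the stabilizer of $\gamma_\sigma$ in $T$. Writing $N_\sigma=\Span_\R(\sigma)\cap N$ and $N(\sigma)=N/N_\sigma$, one reads off directly from the formula for $\gamma_\sigma$ that $t\cdot\gamma_\sigma=\gamma_\sigma$ iff $t(m)=1$ for all $m\in\sigma^\perp\cap M$, i.e. iff $t$ lies in the subtorus $T_{N_\sigma}\subseteq T$ associated to $N_\sigma$. Hence $O(\sigma)\cong T/T_{N_\sigma}\cong N(\sigma)\otimes_\Z\C^\ast$, which has dimension $n-\dim\sigma$.

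The heart of the matter is the orbit decomposition of a single chart, which gives (2) and, after patching, the bijection in (1). Given $x\in U_\sigma$, the subset $F_x=\{m\in S_\sigma:x(m)\neq0\}$ is a face of the semigroup $S_\sigma$; by the standard order-reversing correspondence between faces of $\sigma$ and faces of $\sigma^\vee$ (equivalently of $S_\sigma$), there is a unique face $\tau\preceq\sigma$ with $F_x=S_\sigma\cap\tau^\perp$. Since $x$ restricted to $F_x$ is a semigroup homomorphism into the group $\C^\ast$, it extends to a group homomorphism on $\Z F_x$ and hence, $\C^\ast$ being divisible, to some $t^{-1}\in\Hom(M,\C^\ast)$; for this $t$ one checks $t\cdot x=\gamma_\tau$, so $x\in O(\tau)$. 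Conversely $O(\tau)\subseteq U_\sigma$ exactly when $\tau\preceq\sigma$, whence $U_\sigma=\bigsqcup_{\tau\preceq\sigma}O(\tau)$, which is (2). Because $X$ is covered by the $U_\sigma$, with $U_\sigma\cap U_{\sigma'}=U_{\sigma\cap\sigma'}$, and $O(\tau)\subseteq U_\sigma\Leftrightarrow\tau\preceq\sigma$, these local decompositions are compatible and glue to the global bijection $\sigma\leftrightarrow O(\sigma)$ of (1).

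Finally, for (3) and (4) I would use one-parameter subgroups. For $u\in N$ let $\lambda^u\colon\C^\ast\to T$ be the associated one-parameter subgroup, given as a point of $T\subseteq U_\sigma$ by $m\mapsto t^{\langle m,u\rangle}$. The limit $\lim_{t\to0}\lambda^u(t)$ exists in $U_\sigma$ iff $\langle m,u\rangle\geq0$ for all $m\in S_\sigma$, i.e. iff $u\in\sigma$, and if $u$ lies in the relative interior of a face $\tau\preceq\sigma$ then a short convexity argument shows this limit equals $\gamma_\tau$. Acting by $T$ and varying $u$ gives $\gamma_{\sigma'}\in\overline{O(\tau)}$ whenever $\tau\preceq\sigma'$; combined with the chart-by-chart orbit decomposition this yields $O(\sigma')\subseteq\overline{O(\tau)}\Leftrightarrow\tau\preceq\sigma'$, which is (3), and then (4) follows formally. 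The coincidence of the Zariski and classical closures is the general fact that a constructible subset of a complex variety, here the locally closed orbit $O(\tau)$, has the same closure in both topologies. I expect the main obstacle to be the orbit decomposition of $U_\sigma$: it requires carefully matching faces of the semigroup $S_\sigma$ with faces of the cone $\sigma$ and verifying that every point of a chart genuinely lies in the torus orbit through the associated distinguished point, and not merely in its closure.
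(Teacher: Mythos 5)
The paper does not prove this statement: it is quoted verbatim from Cox--Little--Schenck \cite[Theorem 3.2.6]{CLS} as standard background, so there is no internal proof to compare against. Your sketch correctly reproduces the classical argument from that reference (points of $U_\sigma$ as semigroup homomorphisms, the face $F_x$ and the extension of $x|_{F_x}$ via divisibility of $\C^\ast$ to put each point on the orbit of a distinguished point, limits of one-parameter subgroups for the closure relations), and I see no gaps in it.
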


\begin{notation}
We will use these notations.
\begin{itemize}
\item
For $m \in M$, we denote by $\chi^m: T \rightarrow \C^\ast$ the corresponding character.
\item
$\Sigma(k)$ is the set of $k$-dimensional cones of $\Sigma$ and for any $\sigma \in \Sigma$,
$\sigma(1) = \Sigma(1) \cap \{ \tau \in \Sigma : \tau \preceq \sigma \}$.
\item
For $\rho \in \Sigma(1)$, we denote by $u_{\rho} \in N$ the minimal generator of $\rho$.
\item
For $\sigma \in \Sigma$, we set $V(\sigma) = \overline{O(\sigma)}$.
If $\rho \in \Sigma(1)$, we denote $V(\rho)$ by $D_{\rho}$.
\item
For any $\sigma \in \Sigma$, we set $N_{\sigma} = \Span(\sigma) \cap N$ and
$M(\sigma) = M \cap \sigma^{\perp}$.
\end{itemize}
\end{notation}

Divisors of the form $\sum_{\rho \in \Sigma(1)} a_{\rho} D_{\rho}$ are precisely the invariant
divisors under the torus action on $X_{\Sigma}$. Thus,
$$
\WDiv_T(X_\Sigma) := \bigoplus_{\rho \in \Sigma(1)} \Z D_\rho
$$
is the group of invariant Weil divisors on $X_{\Sigma}$. We denote by $\Div_T(X_{\Sigma})$
the set of invariant Cartier divisors on $X_{\Sigma}$.
A {\it support function} of $\Sigma$ is a function
$\varphi: |\Sigma| \rightarrow \R$ that is linear on each cone of $\Sigma$. Support
functions can be used to characterize Cartier divisors:

\begin{prop}[{\cite[Theorem 4.2.12]{CLS}}]
\label{prop:support-function-divisor}
Let $D = \sum_{\rho \in \Sigma(1)}{a_{\rho}D_{\rho}}$ be a Cartier divisor of
$X_{\Sigma}$. The support function $\varphi_{D} : |\Sigma| \rightarrow \R$ associated to the
divisor $D$ is defined by $\varphi_{D}(u_{\rho}) = - a_{\rho}$ for any
$\rho \in \Sigma(1)$.
\end{prop}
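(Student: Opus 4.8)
The statement is essentially a translation of the definition of a Cartier divisor into the language of piecewise-linear functions on the fan, so the plan is to unwind that definition. First I would use that $D$ is Cartier to get, for each $\sigma\in\Sigma$, a local defining equation for $D$ on the affine chart $U_\sigma=\Spec(\C[S_\sigma])$; since $D$ is $T$-invariant and $U_\sigma$ is affine, this equation can be taken $T$-semi-invariant, i.e.\ of the form $\chi^{-m_\sigma}$ for some $m_\sigma\in M$, so that $D|_{U_\sigma}=\ddiv(\chi^{-m_\sigma})|_{U_\sigma}$. I would then invoke the basic formula $\ddiv(\chi^m)=\sum_{\rho\in\Sigma(1)}\langle m,u_\rho\rangle D_\rho$, which comes from the order of vanishing of $\chi^m$ along the prime invariant divisor $D_\rho$ being exactly $\langle m,u_\rho\rangle$ (the ray generator $u_\rho$ computing the associated divisorial valuation).

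Comparing the coefficients of $D_\rho$, for $\rho\in\sigma(1)$, on both sides of $D|_{U_\sigma}=\ddiv(\chi^{-m_\sigma})|_{U_\sigma}$ yields $\langle m_\sigma,u_\rho\rangle=-a_\rho$. I would then define $\varphi_D:|\Sigma|\to\R$ by setting $\varphi_D(u)=\langle m_\sigma,u\rangle$ whenever $u\in\sigma$, and check that this is well defined: if $u\in\sigma\cap\sigma'$, then $\tau:=\sigma\cap\sigma'$ is a common face of both, and on $U_\tau=U_\sigma\cap U_{\sigma'}$ the characters $\chi^{-m_\sigma}$ and $\chi^{-m_{\sigma'}}$ cut out the same divisor; since they therefore differ by a unit of $\C[S_\tau]$, and the units of $\C[S_\tau]$ are scalar multiples of $\chi^{m}$ with $m\in M(\tau)=M\cap\tau^\perp$, we get $m_\sigma-m_{\sigma'}\in\tau^\perp$ and hence $\langle m_\sigma,u\rangle=\langle m_{\sigma'},u\rangle$. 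By construction $\varphi_D$ is linear on each cone of $\Sigma$, so it is a support function of $\Sigma$.

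Finally, applying this at $u=u_\rho$ for $\rho\in\Sigma(1)$ and any $\sigma$ with $\rho\in\sigma(1)$, the coefficient comparison above gives $\varphi_D(u_\rho)=\langle m_\sigma,u_\rho\rangle=-a_\rho$, which is the asserted formula; the orbit--cone correspondence (Theorem \ref{theo:orbit-cone}) together with the identity $U_\sigma\cap U_{\sigma'}=U_{\sigma\cap\sigma'}$ recalled above are exactly what make the patching work. The only genuinely delicate point is keeping track of sign conventions --- whether one trivialises $D$ locally by $\chi^{m_\sigma}$ or by $\chi^{-m_\sigma}$ --- and verifying that the collection $\{m_\sigma\}_{\sigma\in\Sigma}$ really glues to a function on $|\Sigma|$; beyond that bookkeeping there is no essential obstacle.
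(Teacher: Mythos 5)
Your argument is correct and is precisely the standard one from the cited source [CLS, Theorem 4.2.12]; the paper itself gives no proof, only the citation. You correctly identify the two real points — existence of $T$-semi-invariant local equations $\chi^{-m_\sigma}$ for an invariant Cartier divisor, and the gluing of the Cartier data $\{m_\sigma\}$ into a well-defined piecewise-linear function via the description of units of $\C[S_\tau]$ — so there is nothing to add.
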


We now provide formulas that will be used to compute various intersections of toric
divisors. We assume that $X$ is an $n$-dimensional toric variety given by a complete and
simplicial fan $\Sigma$.
An element $u \in N$ is {\it primitive}\index{primitive element} if
$\frac{1}{k} u \notin N$ for all $k >1$.
Let $\{u_1, \ldots, u_k\}$ be a set of primitive elements of $N$ such that
$\sigma = \Cone(u_1, \ldots, u_k)$ is simplicial. We define $\mult(\sigma)$ as the index
of the sublattice $\Z u_1 + \ldots + \Z u_k$ in $N_{\sigma} = \Span(\sigma) \cap N$.

As $\Sigma$ is simplicial, according to \cite[Section 5.1]{Ful93}, one has intersections
of cycles or cycle classes only with rational coefficients. The Chow group
$$
A^*(X)_\Q = \bigoplus_{p=0}^{n} A^{p}(X) \otimes \Q =
\bigoplus_{p=0}^{n} A_{n-p}(X) \otimes \Q
$$
has the structure of graded $\Q$-algebra and,

\begin{prop}\label{prop:intersection-toric-simplicial}
Let $\tau, \tau', \sigma \in \Sigma$ such that $\tau$ and $\tau'$ span $\sigma$, with
$\dim(\sigma) = \dim(\tau) + \dim(\tau')$, then
$$
[V(\tau)] \cdot [V(\tau')] = \dfrac{\mult(\tau) \cdot \mult(\tau')}{\mult(\sigma)}
[V(\sigma)] .
$$
\end{prop}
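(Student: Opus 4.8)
The plan is to reduce the identity to a transverse intersection of torus-invariant subvarieties on a \emph{smooth} toric variety, by passing to a finite quotient presentation of the affine chart $U_\sigma$.

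First, I would observe that the product is forced to be a single multiple of $[V(\sigma)]$. By the Orbit--Cone Correspondence (Theorem \ref{theo:orbit-cone}), $\sigma$ is the smallest cone of $\Sigma$ containing both $\tau$ and $\tau'$, so $V(\tau)\cap V(\tau')=V(\sigma)$, an irreducible subvariety; and the hypothesis $\dim(\sigma)=\dim(\tau)+\dim(\tau')$ means $\tau\cap\tau'=\{0\}$, so the two cycles meet in the expected dimension $n-\dim(\sigma)$. Since $A_{n-\dim(\sigma)}(V(\sigma))_\Q=\Q\,[V(\sigma)]$, the refined product gives $[V(\tau)]\cdot[V(\tau')]=c\,[V(\sigma)]$ for a unique $c\in\Q_{>0}$. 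Restricting along the ring morphism $A^*(X)_\Q\to A^*(U_\sigma)_\Q$ (which sends each $[V(\eta)]$, $\eta\preceq\sigma$, to $[V(\eta)\cap U_\sigma]$ and is nonzero on $[V(\sigma)]$), one may replace $X$ by $U_\sigma$, i.e. $\Sigma$ by the fan of faces of $\sigma$; this localizes the computation of $c$ around the generic point of $V(\sigma)$.

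Next I would construct a smooth finite cover. Put $d=\dim(\sigma)$, let $v_1,\dots,v_d$ be the primitive generators of $\sigma(1)$, fix a splitting $N=N_\sigma\oplus N''$, and set $N'=(\Z v_1+\dots+\Z v_d)\oplus N''$, so that $[N:N']=\mult(\sigma)$. Regarding the faces of $\sigma$ as a fan in $N'_\R=N_\R$ produces a \emph{smooth} affine toric variety $U'_\sigma$ and a finite toric morphism $\phi:U'_\sigma\to U_\sigma$ of degree $\mult(\sigma)$, étale over the torus. For each face $\eta\preceq\sigma$ one has $\phi^{-1}(V(\eta))=V'(\eta)$ (orbit closures map to orbit closures), and---using that the faces of the simplicial cone $\sigma$ are generated by subsets of $\{v_1,\dots,v_d\}$ and are therefore smooth for $N'$---an elementary index computation on $N/N_\eta$ versus $N'/N'_\eta$ gives $\deg\!\big(V'(\eta)\to V(\eta)\big)=\mult(\sigma)/\mult(\eta)$. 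Together with the projection formula $\phi_*\phi^*=\deg(\phi)\cdot\mathrm{id}$, this forces $\phi^*[V(\eta)]=\mult(\eta)\,[V'(\eta)]$ in $A^*(U'_\sigma)_\Q$ for $\eta\in\{\tau,\tau',\sigma\}$.

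Finally, on the smooth variety $U'_\sigma\cong\C^d\times(\C^*)^{\,n-d}$ the subvarieties $V'(\tau)$ and $V'(\tau')$ are the coordinate subspaces cut out by the coordinates attached to the rays of $\tau$, resp. of $\tau'$; since $\tau(1)\sqcup\tau'(1)=\sigma(1)$, they meet transversally along $V'(\sigma)$ and $[V'(\tau)]\cdot[V'(\tau')]=[V'(\sigma)]$. Pulling back the relation $[V(\tau)]\cdot[V(\tau')]=c\,[V(\sigma)]$ and using the previous step,
\[
\mult(\tau)\,\mult(\tau')\,[V'(\sigma)]=\phi^*[V(\tau)]\cdot\phi^*[V(\tau')]=c\,\phi^*[V(\sigma)]=c\,\mult(\sigma)\,[V'(\sigma)],
\]
which yields $c=\mult(\tau)\,\mult(\tau')/\mult(\sigma)$, as claimed. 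The hard part will be the passage through $\phi$: one must make the cycle pull-back $\phi^*$ precise and check its multiplicativity on the classes at hand, even though $\phi$ is not flat (its target $U_\sigma$ is only $\Q$-factorial)---this is exactly where the $\Q$-coefficients of \cite[Section 5.1]{Ful93} intervene, and where I expect the main technical work. An alternative route that stays inside $X$ is to induct on $\dim(\tau')$: peeling off one ray $\rho\in\tau'(1)$ and expressing $[V(\tau')]$ in terms of $[V(\tau'')]\cdot[D_\rho]$, the associativity of the product on $A^*(X)_\Q$ reduces the statement to the single base case $[V(\tau)]\cdot[D_\rho]=\tfrac{\mult(\tau)}{\mult(\tau+\rho)}\,[V(\tau+\rho)]$ for $\tau+\rho$ simplicial of dimension $\dim(\tau)+1$, which is a local computation of an order of vanishing on the toric variety $V(\tau)$.
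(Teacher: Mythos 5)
Your main route has a genuine gap at the very last step. After localising to the affine chart, the coefficient comparison
$\mult(\tau)\,\mult(\tau')\,[V'(\sigma)]=c\,\mult(\sigma)\,[V'(\sigma)]$ takes place in $A_{n-\dim\sigma}(U'_\sigma)_\Q$, and this group is \emph{zero}: $U'_\sigma\cong\C^d\times(\C^*)^{n-d}$ is an open subset of $\C^n$, so $A_k(U'_\sigma)_\Q=0$ for all $k<n$, and in particular $[V'(\sigma)]=0$ there (likewise $[V(\sigma)\cap U_\sigma]=0$ in $A_{n-d}(U_\sigma)_\Q$, since $A_0(\C^d/G)_\Q=0$; your parenthetical claim that the restriction $A^*(X)_\Q\to A^*(U_\sigma)_\Q$ is nonzero on $[V(\sigma)]$ is false). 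So the displayed identity reads $0=0$ and determines nothing about $c$. The strategy can be rescued, but only by working throughout with \emph{refined} classes supported on $V(\sigma)$, resp.\ $V'(\sigma)$ (where $A_{n-d}(V'(\sigma))_\Q=\Q\,[V'(\sigma)]\neq 0$), i.e.\ with local intersection multiplicities along the distinguished component, and by proving that this refined product is compatible with $\phi^*$ for the finite, non-flat $\phi$. You set up the refined product in your first paragraph but then silently pass to ordinary classes on the open chart, and you explicitly defer the $\phi^*$-compatibility as ``the main technical work'' --- but that compatibility \emph{is} essentially the content of the proposition in this approach (it is how the $\Q$-valued product on a simplicial toric variety is constructed in \cite[Section 5.1]{Ful93} in the first place), so deferring it leaves the proof incomplete.

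By contrast, the paper's proof is a two-line formal consequence of Lemma \ref{lem:intersection-toric-simplicial} (\cite[Lemma 12.5.2]{CLS}): writing $\tau=\rho_1+\dots+\rho_p$ and $\tau'=\rho_{p+1}+\dots+\rho_q$, that lemma identifies $[D_{\rho_1}]\cdots[D_{\rho_p}]=\tfrac{1}{\mult(\tau)}[V(\tau)]$, $[D_{\rho_{p+1}}]\cdots[D_{\rho_q}]=\tfrac{1}{\mult(\tau')}[V(\tau')]$ and $[D_{\rho_1}]\cdots[D_{\rho_q}]=\tfrac{1}{\mult(\sigma)}[V(\sigma)]$, and associativity of the product in $A^{\bullet}(X)_\Q$ does the rest. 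Your ``alternative route'' at the end is essentially this argument: the base case $[V(\tau)]\cdot[D_\rho]=\tfrac{\mult(\tau)}{\mult(\tau+\rho)}[V(\tau+\rho)]$ that you propose to verify by a local order-of-vanishing computation is exactly what the quoted lemma supplies. Had you developed that second sketch instead of the covering argument, you would have recovered the paper's proof.
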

This proposition is a consequence of the following Lemmas.

\begin{lem}[{\cite[Lemma 12.5.1]{CLS}}]
The Chow group $A_k(X)$ is generated by the classes of the orbit closures $V(\sigma)$ of
the cones $\sigma \in \Sigma$ of dimension $n-k$. 
\end{lem}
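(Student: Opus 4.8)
The plan is to use the decomposition of $X$ into its finitely many torus orbits, filtered by orbit dimension, together with the right-exactness of Chow groups under restriction to an open subset. For $0 \le d \le n$ set
\[
X_{\le d} := \bigcup_{\dim \sigma \ge n-d} V(\sigma) .
\]
By parts (3)--(4) of the Orbit--Cone Correspondence, each $V(\sigma)$ is closed and is the union of the orbits $O(\sigma')$ with $\sigma \preceq \sigma'$, all of dimension at most $\dim V(\sigma) = n - \dim \sigma$; hence $X_{\le d}$ is a closed subvariety of $X$, it coincides with the union of all orbits of dimension $\le d$, and one obtains a filtration $\emptyset = X_{\le -1} \subseteq X_{\le 0} \subseteq \cdots \subseteq X_{\le n} = X$ whose successive locally closed strata are
\[
X_{\le d} \setminus X_{\le d-1} = \bigsqcup_{\sigma \in \Sigma(n-d)} O(\sigma), \qquad O(\sigma) \cong (\C^\ast)^{d} .
\]

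Then I would prove, by induction on $d$, that $A_k(X_{\le d})$ is generated by the classes $[V(\tau)]$ with $\tau \in \Sigma(n-k)$ and $V(\tau) \subseteq X_{\le d}$ (equivalently, with $k \le d$); for $d = 0$ this is clear, as $X_{\le 0}$ is a finite set of torus-fixed points. For the inductive step, and for each $k$, one invokes the exact sequence
\[
A_k(X_{\le d-1}) \xrightarrow{i_*} A_k(X_{\le d}) \xrightarrow{j^*} \bigoplus_{\sigma \in \Sigma(n-d)} A_k(O(\sigma)) \longrightarrow 0
\]
associated with the closed immersion $i \colon X_{\le d-1} \hookrightarrow X_{\le d}$ and its open complement. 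If $k < d$ the right-hand term vanishes (see below), so $i_*$ is surjective and the induction hypothesis for $X_{\le d-1}$ gives the claim for $X_{\le d}$. If $k = d$ the left-hand term is zero for dimension reasons ($\dim X_{\le d-1} = d-1$), so $A_d(X_{\le d}) \xrightarrow{\sim} \bigoplus_{\sigma \in \Sigma(n-d)} A_d(O(\sigma)) = \bigoplus_{\sigma \in \Sigma(n-d)} \Z\,[O(\sigma)]$; since $j^*[V(\sigma)] = [O(\sigma)]$, the classes $[V(\sigma)]$ with $\sigma \in \Sigma(n-d)$ generate. If $k > d$ then $A_k(X_{\le d}) = 0$ by dimension. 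Taking $d = n$ yields the Lemma.

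The only input that is not purely formal intersection theory is the computation of the Chow groups of a torus: $A_k\big((\C^\ast)^{m}\big) = 0$ for $k < m$, while $A_m\big((\C^\ast)^{m}\big) = \Z$ is generated by the fundamental class. This is the step I would isolate as a separate lemma, and it is the one place where the particular geometry of the orbits is used. I would establish it by induction on $m$, writing $(\C^\ast)^{m} = (\C^\ast)^{m-1} \times \C^\ast$ and combining the homotopy-invariance isomorphism $A_k(\C \times Y) \cong A_{k-1}(Y)$ for the trivial line bundle with the localization sequence for $\C^\ast = \C \setminus \{0\}$, the base cases $m = 0,1$ being immediate. Granting this, the remaining ingredients --- closedness of the $X_{\le d}$, the identification of strata via the Orbit--Cone Correspondence, right-exactness of $A_k(-)$, and the vanishing $A_k(Y) = 0$ for $k > \dim Y$ --- are standard (cf.\ \cite{Ful93}), so this torus computation is the main, and essentially the only, obstacle.
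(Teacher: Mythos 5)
Your argument is correct; note that the paper does not prove this lemma but simply cites \cite[Lemma 12.5.1]{CLS}, and your proof is essentially the standard one given there (and in Fulton's treatment of toric Chow groups): filter $X$ by the closed unions of orbits of bounded dimension, apply the localization exact sequence $A_k(Z)\to A_k(X)\to A_k(U)\to 0$ to each stratum, and feed in the vanishing $A_k((\C^\ast)^m)=0$ for $k<m$. All the individual steps (closedness of the $X_{\le d}$, identification of the strata via the Orbit--Cone Correspondence, $j^\ast[V(\sigma)]=[O(\sigma)]$, and the torus computation via homotopy invariance plus excision) check out.
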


\begin{lem}[{\cite[Lemma 12.5.2]{CLS}}]\label{lem:intersection-toric-simplicial}
Assume that $\Sigma$ is complete and simplicial.
If $\rho_1, \ldots, \rho_d \in \Sigma(1)$ are distinct, then in $A^{\bullet}(X)_\Q$,
we have
$$
[D_{\rho_1}] \cdot [D_{\rho_2}] \cdots [D_{\rho_d}] = \left\lbrace
\begin{array}{ll}
\dfrac{1}{\mult(\sigma)} [ V(\sigma)] & \text{if} ~
\sigma = \rho_1 + \ldots + \rho_d \in \Sigma \\ 0 & \text{otherwise.}
\end{array}
\right.$$
\end{lem}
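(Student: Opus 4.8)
The plan is to argue by induction on $d$, reducing the whole product to a single cap product of a prime divisor with an orbit closure. Since $\Sigma$ is simplicial, $X$ is $\Q$-factorial, so each $D_{\rho_i}$ is $\Q$-Cartier and the products make sense in $A^{\bullet}(X)_\Q$. For the base case $d=1$ there is nothing to do: $[D_{\rho_1}]=[V(\rho_1)]$ by definition, and $\mult(\rho_1)=1$ because $u_{\rho_1}$ is primitive, so $\Z u_{\rho_1}=N_{\rho_1}$; the ``otherwise'' alternative is vacuous as a single ray is always a cone of $\Sigma$. Throughout I write $L_\gamma:=\sum_{\rho\in\gamma(1)}\Z u_\rho$, so that $\mult(\gamma)=[N_\gamma:L_\gamma]$.

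For the inductive step, set $\tau:=\rho_2+\cdots+\rho_d$ and group the product as $[D_{\rho_1}]\cdot\big([D_{\rho_2}]\cdots[D_{\rho_d}]\big)$. By induction the second factor vanishes unless $\tau\in\Sigma$, in which case it equals $\tfrac{1}{\mult(\tau)}[V(\tau)]$. If $\tau\notin\Sigma$, then $\sigma:=\rho_1+\cdots+\rho_d\notin\Sigma$ as well (it would contain $\tau$ as a face), so both sides of the asserted formula are $0$. Assume then $\tau\in\Sigma$. As the $\rho_i$ are distinct we have $\rho_1\notin\tau(1)$, so the cap product $[D_{\rho_1}]\cdot[V(\tau)]$ is defined and, by the refined intersection with a $\Q$-Cartier divisor, is supported on $V(\rho_1)\cap V(\tau)$. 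By the Orbit--Cone Correspondence (Theorem \ref{theo:orbit-cone}) this set equals $V(\sigma)$ when $\sigma=\rho_1+\tau\in\Sigma$ and is empty otherwise; in the first case, since $\dim V(\sigma)=n-d$ and $V(\sigma)$ is irreducible, the class is $c'[V(\sigma)]$ for some $c'\in\Q$. It therefore remains to show $c'=\mult(\tau)/\mult(\sigma)$, after which $[D_{\rho_1}]\cdots[D_{\rho_d}]=\tfrac{1}{\mult(\tau)}\,c'\,[V(\sigma)]=\tfrac{1}{\mult(\sigma)}[V(\sigma)]$.

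To compute $c'$ I would restrict to the orbit closure. The subvariety $V(\tau)$ is the toric variety $X_{\mathrm{Star}(\tau)}$ with fan the star of $\tau$ in $N(\tau)_{\R}$, $N(\tau)=N/N_\tau$, and $V(\sigma)$ is its prime toric divisor attached to the image ray $\bar\rho_1$ of $\rho_1$, which belongs to $\mathrm{Star}(\tau)$ exactly when $\sigma\in\Sigma$. Writing $\bar u_{\rho_1}=k\,\bar v$ with $\bar v$ the primitive generator of $\bar\rho_1$ in $N(\tau)$, the support-function description of the restricted divisor gives $D_{\rho_1}\vert_{V(\tau)}=\tfrac1k\,[V(\sigma)]$: indeed, on $\sigma$ the support function of $D_{\rho_1}$ is linear, equal to some $m_\sigma\in M$ with $\langle m_\sigma,u_{\rho_j}\rangle=-\delta_{1j}$, and evaluating on a lift $v\equiv\tfrac1k u_{\rho_1}\ (\mathrm{mod}\ (N_\tau)_{\R})$ yields coefficient $\tfrac1k$. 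Finally the identity $\mult(\sigma)=k\,\mult(\tau)$ follows from a lattice-index computation: since $N_\sigma/N_\tau$ is a saturated rank-one subgroup of $N(\tau)$ generated by $\bar v$, one checks $L_\sigma\cap N_\tau=L_\tau$, so the image of $L_\sigma$ in $N_\sigma/N_\tau=\Z\bar v$ is $\Z\bar u_{\rho_1}$ of index $k$, and multiplicativity of indices in the exact sequence $0\to N_\tau\to N_\sigma\to N_\sigma/N_\tau\to 0$ gives $[N_\sigma:L_\sigma]=[N_\tau:L_\tau]\cdot k$. Hence $c'=\tfrac1k=\mult(\tau)/\mult(\sigma)$.

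I expect the genuine obstacle to be exactly this last paragraph, i.e.\ pinning down the coefficient $c'$: one must simultaneously justify the restriction formula $D_{\rho_1}\vert_{V(\tau)}=\tfrac1k[V(\sigma)]$ through support functions on the star fan and carry out the index bookkeeping identifying $k$ with $\mult(\sigma)/\mult(\tau)$. Everything else, namely the reductions via support of the refined product and the induction itself, is formal. As a sanity check, in the smooth case all multiplicities equal $1$ and the statement reduces to the transverse intersection $[D_{\rho_1}]\cdots[D_{\rho_d}]=[V(\sigma)]$; alternatively one can localise on a chart $U_{\sigma_{\max}}$, present it as a quotient $\C^n/G$ with $|G|=\mult(\sigma_{\max})$ by the sublattice generated by the ray generators, and transport the transverse computation on $\C^n$ down along the degree-$\mult(\sigma_{\max})$ quotient map, which reproduces the factor $1/\mult(\sigma)$ and provides an independent verification of the multiplicity.
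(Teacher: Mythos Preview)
The paper does not give a proof of this lemma; it is quoted verbatim as \cite[Lemma 12.5.2]{CLS} and used as a black box (for instance in the proof of Proposition~\ref{prop:intersection-toric-simplicial}). So there is no ``paper's own proof'' to compare against.

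That said, your argument is correct and is essentially the standard one from the cited source. The induction scheme reducing to a single cap product $[D_{\rho_1}]\cdot[V(\tau)]$, the support argument via the Orbit--Cone Correspondence, the restriction formula $D_{\rho_1}\vert_{V(\tau)}=\tfrac{1}{k}[V(\sigma)]$ read off from the support function on the star fan, and the index identity $\mult(\sigma)=k\cdot\mult(\tau)$ are exactly the ingredients used in \cite{CLS}. Your verification that $L_\sigma\cap N_\tau=L_\tau$ (which relies on the linear independence of the $u_{\rho_i}$ in the simplicial case) and the resulting multiplicativity of indices is the honest content of the computation, and you have it right. The alternative local picture via the quotient $\C^n/G$ with $|G|=\mult(\sigma_{\max})$ that you mention as a sanity check is also a valid way to see the factor $1/\mult(\sigma)$ and matches the simplicial toric orbifold viewpoint.
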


%We can now give the proof of Proposition \ref{prop:intersection-toric-simplicial}.
%
\begin{proof}[Proof of Proposition \ref{prop:intersection-toric-simplicial}]
Let $\rho_1, \ldots, \rho_q \in \Sigma(1)$ distinct such that
$\tau = \rho_1 + \ldots + \rho_p$ and $\tau'= \rho_{p+1} + \ldots \rho_q$ with $p<q$.
By Lemma \ref{lem:intersection-toric-simplicial}, we get
\begin{align*}
\dfrac{1}{\mult(\sigma)} [V(\sigma)] & = ([D_{\rho_1}] \cdots [D_{\rho_p}]) \cdot
([D_{\rho_{p+1}}] \cdots [D_{\rho_q}])
\\ & = \dfrac{1}{\mult(\tau) \cdot \mult(\tau')} [V(\tau)] \cdot [V(\tau')].
\end{align*}
\end{proof}

For $m \in M$, the character $\chi^m$ is a rational function on $X$. By
\cite[Proposition 4.1.2]{CLS} the divisor of $\chi^m$ is given by
\begin{equation}\label{eq:divisor-of-character}
\ddiv(\chi^m) =\sum_{\rho \in \Sigma(1)}{ \langle m, \, u_{\rho} \rangle} D_{\rho}
\end{equation}
and $\ddiv(\chi^m) = 0$ in $A_{n-1}(X)$.
In the case where $\rho \in \Sigma(1)$ is a ray of $\sigma \in \Sigma$, there is
$m \in M$ such that $V(\sigma)$ is not contained in the support of
$D_{\rho} + \ddiv(\chi^m)$. We then set
\begin{equation}\label{eq:sel-intersection}
[D_{\rho}] \cdot [V(\sigma)] = [D_{\rho} + \ddiv(\chi^m)] \cdot [V(\sigma)] .
\end{equation}

\subsection{Toric morphisms}\label{sec:toric-morphism}
For this part we refer to \cite[Section 3.3]{CLS}.
Let $N_1$, $N_2$ be two lattices with $\Sigma_1$ a fan in $(N_1)_{\R}$ and $\Sigma_2$
a fan in $(N_2)_{\R}$. We denote by $X_1$ (resp. $X_2$) the toric variety associated
to the fan $\Sigma_1$ (resp. $\Sigma_2$).

A morphism $\pi : X_1 \rightarrow X_2$ is {\it toric} if $\pi$ maps the torus $T_1$ of $X_1$
into the torus $T_2$ of $X_2$ and $\pi_{| T_1}: T_1 \rightarrow T_2$ is a group homomorphism.
We say that a  $\Z$-linear map $\phi: N_1 \rightarrow N_2$ is {\it compatible} with the fan
$\Sigma_1$ and $\Sigma_2$ if for every cone $\sigma_1 \in \Sigma_1$, there is a cone
$\sigma_2 \in \Sigma_2$ such that $\phi_{\R}(\sigma_1) \subseteq\sigma_2$ where $\phi_{\R}$
is the $\R$-linear extension of $\phi$. According to  \cite[Theorem 3.3.4]{CLS}, any toric
morphism $\pi: X_1 \rightarrow X_2$ comes from a $\Z$-linear map $\phi: N_1 \rightarrow N_2$
that is compatible with $\Sigma_1$ and $\Sigma_2$.

We will be interested in {\it toric fibrations}. A proper toric morphism $\pi: X_1 \rightarrow X_2$ is a {\it fibration} if $\pi_\ast( \cO_{X_1}) = \cO_{X_2}$. According to \cite[Theorem 3.4.11]{CLS} and \cite[Proposition 2.1]{Cataldo-toricmaps}, if $X_1$ and $X_2$ are complete, $\pi$ is a toric fibration if and only if the associated map $\phi: N_1 \rightarrow N_2$ is surjective.
We now give specific examples of toric fibrations.

\subsubsection{Locally trivial fibrations}\label{sec:fibration}
Let $N$, $N'$ be two lattices and $\phi : N' \rightarrow N$ be a surjective
$\Z$-linear map. Let $\Sigma'$ be a fan in $N_{\R}'$ and $\Sigma$ a fan in $N_{\R}$
compatible with $\phi$. We set $N_0 = \Ker( \phi)$ and
$\Sigma_0 = \{ \sigma \in \Sigma' : \sigma \subseteq (N_0)_{\R} \}$.
We have an exact sequence
\begin{equation}\label{eq:split-lattice}
0 \imp N_0 \imp N' \imp N \imp 0
\end{equation}
We say that $\Sigma'$ is {\it weakly split by} $\Sigma$ and $\Sigma_0$ if there exists a subfan $\widehat{\Sigma}$ of $\Sigma'$ such that:
\begin{enumerate}
\item
$\phi_{\R}$ maps each cones $\widehat{\sigma} \in \widehat{\Sigma}$ bijectively
to a cone $\sigma \in \Sigma$. Furthermore, the map $\widehat{\sigma} \mapsto \sigma$ define a
bijection $\widehat{\Sigma} \rightarrow \Sigma$.
\item
Given $\widehat{\sigma} \in \widehat{\Sigma}$ and $\sigma_0 \in \Sigma_0$, the sum
$\widehat{\sigma} + \sigma_0$ lies in $\Sigma'$, and every cone of $\Sigma'$ arises this
way.
\end{enumerate}
Moreover, if $\phi(\widehat{\sigma} \cap N')= \sigma \cap N$ for any
$\widehat{\sigma} \in \widehat{\Sigma}$ with $\phi_{\R}(\widehat{\sigma}) = \sigma$, we say
that $\Sigma'$ is {\it split by} $\Sigma$ and $\Sigma_0$.

\begin{theorem}[{\cite[Theorem 3.3.19]{CLS}}]\label{theo:fiber-bundle}
If $\Sigma'$ is split by $\Sigma$ and $\Sigma_0$, then $X_{\Sigma'}$ is a locally trivial
fiber bundle over $X_{\Sigma}$ with fiber $X_{\Sigma_0, N_0}$ where $X_{\Sigma_0, N_0}$ is
the toric variety associated to the fan $\Sigma_0$ in $(N_0)_{\R}$.
\end{theorem}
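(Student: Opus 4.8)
The strategy is to verify local triviality directly over the affine toric charts $U_\sigma$, $\sigma\in\Sigma$, which cover $X_\Sigma$; so it suffices to construct, for each $\sigma\in\Sigma$, an isomorphism $\pi^{-1}(U_\sigma)\cong U_\sigma\times X_{\Sigma_0,N_0}$ over $U_\sigma$. First I would recall that for the toric morphism $\pi:X_{\Sigma'}\to X_\Sigma$ induced by $\phi$, the preimage of $U_\sigma$ is the open toric subvariety attached to the subfan $\Sigma'_\sigma:=\{\tau'\in\Sigma' \;:\; \phi_\R(\tau')\subseteq\sigma\}$ (see \cite[Section 3.3]{CLS}). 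Let $\widehat\sigma\in\widehat\Sigma$ be the unique cone mapping onto $\sigma$. By property (2) of the splitting, every cone of $\Sigma'$ has the form $\widehat\tau+\sigma_0$ with $\widehat\tau\in\widehat\Sigma$, $\sigma_0\in\Sigma_0$, and since $\phi_\R(\sigma_0)=\{0\}$ one has $\phi_\R(\widehat\tau+\sigma_0)=\phi_\R(\widehat\tau)$; property (1) (that $\widehat\Sigma\to\Sigma$ is a bijection of fans and $\phi_\R$ is injective on each cone of $\widehat\Sigma$) then shows that $\phi_\R(\widehat\tau)\subseteq\sigma$ holds if and only if $\widehat\tau\preceq\widehat\sigma$. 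Hence $\Sigma'_\sigma=\{\widehat\tau+\sigma_0 \;:\; \widehat\tau\preceq\widehat\sigma,\ \sigma_0\in\Sigma_0\}$, and the faces of $\widehat\sigma$ form, via $\phi_\R$, a subfan isomorphic to the face fan of $\sigma$, i.e. to the fan of $U_\sigma$.

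Next I would produce a compatible lattice splitting. Since $\phi_\R(\widehat\sigma)=\sigma$ and $\dim\widehat\sigma=\dim\sigma$, the map $\phi_\R$ restricts to a linear isomorphism $\Span(\widehat\sigma)\xrightarrow{\sim}\Span(\sigma)$ carrying each face $\widehat\tau\preceq\widehat\sigma$ onto the corresponding face $\tau\preceq\sigma$. This is where the hypothesis that $\Sigma'$ is \emph{split} (and not merely weakly split) by $\Sigma$ and $\Sigma_0$ enters: the condition $\phi(\widehat\sigma\cap N')=\sigma\cap N$ upgrades this to a bijection $\widehat\sigma\cap N'\xrightarrow{\sim}\sigma\cap N$ of lattice points, and since the lattice points of a rational cone generate the saturated lattice of its linear span, $\phi$ restricts to a lattice isomorphism $\Span(\widehat\sigma)\cap N'\xrightarrow{\sim}\Span(\sigma)\cap N$. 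As $\Span(\sigma)\cap N$ is a saturated, hence split, sublattice of $N$, I can write $N=(\Span(\sigma)\cap N)\oplus N''$ with $N''$ free, lift the inclusion $N''\hookrightarrow N$ along the surjection $\phi:N'\to N$, and combine it with the inverse of $\phi|_{\Span(\widehat\sigma)\cap N'}$ to obtain a group section $s:N\to N'$ of $\phi$ with $s_\R(\tau)=\widehat\tau$ for all $\tau\preceq\sigma$.

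Now set $\psi:N\oplus N_0\to N'$, $\psi(u,v)=s(u)+v$. Since $s$ is a section of $\phi$ and $N_0=\Ker(\phi)$, $\psi$ is a lattice isomorphism with $\phi\circ\psi=\pr_N$; moreover $\Span(\widehat\sigma)\cap(N_0)_\R=\{0\}$ because $\phi_\R$ is injective on $\Span(\widehat\sigma)$, so each sum $\widehat\tau+\sigma_0$ is direct and $\psi_\R(\tau\times\sigma_0)=s_\R(\tau)+\sigma_0=\widehat\tau+\sigma_0$. Thus $\psi$ carries the fan of $U_\sigma\times X_{\Sigma_0,N_0}$, namely $\{\tau\times\sigma_0:\tau\preceq\sigma,\ \sigma_0\in\Sigma_0\}$, onto $\Sigma'_\sigma$, and hence induces an isomorphism of toric varieties $\Psi_\sigma:U_\sigma\times X_{\Sigma_0,N_0}\xrightarrow{\sim}\pi^{-1}(U_\sigma)$; the identity $\phi\circ\psi=\pr_N$ guarantees that $\Psi_\sigma$ intertwines $\pi$ with the first projection.

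Finally I would check that these trivializations glue into a fiber-bundle structure. Over an overlap $U_\sigma\cap U_{\sigma'}=U_{\sigma\cap\sigma'}$ the sections $s_\sigma$ and $s_{\sigma'}$ agree on $\Span(\sigma\cap\sigma')\cap N$, so $\theta:=s_\sigma-s_{\sigma'}:N\to N_0$ vanishes there; the transition automorphism $\psi_{\sigma'}^{-1}\circ\psi_\sigma$ of $N\oplus N_0$ is then $(u,v)\mapsto(u,v+\theta(u))$, which fixes every cone $\tau\times\sigma_0$ with $\tau\preceq\sigma\cap\sigma'$ and thus corresponds to an automorphism $(x,y)\mapsto(x,\lambda_{\sigma\sigma'}(x)\cdot y)$ of $U_{\sigma\cap\sigma'}\times X_{\Sigma_0,N_0}$ for a morphism $\lambda_{\sigma\sigma'}:U_{\sigma\cap\sigma'}\to T_{N_0}$ acting through the torus of the fiber. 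These are precisely the transition maps of a locally trivial fiber bundle with fiber $X_{\Sigma_0,N_0}$ and structure group inside $\Aut(X_{\Sigma_0,N_0})$, which is the desired conclusion. I expect the main obstacle to be the combinatorial bookkeeping of the first two steps — verifying that the faces of a single $\widehat\sigma$ reproduce the fan of $U_\sigma$, and that the stronger "split" hypothesis genuinely promotes the bijection of cones into an isomorphism of lattices; once these are in place, the construction of $\psi$ and the verification of the cocycle condition are formal.
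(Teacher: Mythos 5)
Your proof is correct: the paper itself does not prove this statement but simply quotes it from \cite[Theorem 3.3.19]{CLS}, and your argument reproduces the standard proof given there — identifying $\pi^{-1}(U_\sigma)$ with the subfan $\{\widehat\tau+\sigma_0 : \widehat\tau\preceq\widehat\sigma,\ \sigma_0\in\Sigma_0\}$, using the split condition $\phi(\widehat\sigma\cap N')=\sigma\cap N$ to build a lattice section $s:N\to N'$ and hence a splitting $N'\cong N\oplus N_0$ matching the product fan. The only points worth spelling out a little more are that the bijection $\widehat\Sigma\to\Sigma$ preserves the face relation (so that $\phi_\R(\widehat\tau)\subseteq\sigma$ really is equivalent to $\widehat\tau\preceq\widehat\sigma$) and that lattice points of a rational cone generate $\Span(\sigma)\cap N$; both are easy, and the cocycle verification at the end, while nice, is not strictly needed for local triviality.
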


In the case where $\Sigma'$ is weakly split by $\Sigma$ and $\Sigma_0$, for any
$\sigma \in \Sigma$ there is a sublattice $N'' \subseteq N$ of finite index such that
$\Sigma'(\sigma) = \{\sigma' \in \Sigma' : \phi_{\R}(\sigma') \subseteq \sigma \}$ is split
by $\{ \tau \in \Sigma : \tau \preceq \sigma \}$ and $\Sigma'(\sigma) \cap \Sigma_0$
when we use the lattice $\phi^{-1}(N'')$ and $N''$. Let $U_{\sigma, N''}$ be the toric
variety associated to the cone $\sigma$ in $(N'')_\R$. There is a commutative diagram
$$
\begin{tikzcd}
X_{\Sigma_0, N_0} \times U_{\sigma, N''} \arrow[rr] \arrow[d] & & X_{\Sigma'} \arrow[d]
\\ U_{\sigma, N''} \arrow[rr] & & ~U_{\sigma}
\end{tikzcd}
$$
such that $X_{\Sigma_0, N_0} \times U_{\sigma, N''}$ is the fiber product
$X_{\Sigma'} \times_{U_{\sigma}} U_{\sigma, N''}$.

\begin{cor}\label{cor:fiber-bundle}
If $\Sigma'$ is weakly split by $\Sigma$ and $\Sigma_0$, then the fibers of
$\pi : X_{\Sigma'} \rightarrow X_{\Sigma}$ are isomorphic to $X_{\Sigma_0, N_0}$.
\end{cor}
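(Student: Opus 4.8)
The plan is to compute the scheme-theoretic fibre $\pi^{-1}(x)$ over an arbitrary closed point $x \in X_{\Sigma}$ directly from the local structure recalled just above, using transitivity of fibred products. No compatibility between the various local models is needed: the fibre over $x$ is intrinsic, so it suffices to compute it after passing to any affine chart $U_{\sigma}$ containing $x$.

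First I would localise. Fix $\sigma \in \Sigma$ with $x \in U_{\sigma}$; since $\pi$ is the toric morphism attached to $\phi$, we have $\pi^{-1}(U_{\sigma}) = X_{\Sigma'(\sigma)}$, where $\Sigma'(\sigma) = \{ \sigma' \in \Sigma' : \phi_{\R}(\sigma') \subseteq \sigma \}$, so that $\pi^{-1}(x)$ is the fibre over $x$ of the restriction $X_{\Sigma'(\sigma)} \to U_{\sigma}$. By the discussion preceding the statement, there is a finite index sublattice $N'' \subseteq N$ whose inclusion induces a finite surjective toric morphism $g \colon U_{\sigma, N''} \to U_{\sigma}$ (its underlying map is $N'' \hookrightarrow N$, an isomorphism after $- \otimes_{\Z} \R$), together with a $U_{\sigma, N''}$-isomorphism
$$
X_{\Sigma'(\sigma)} \times_{U_{\sigma}} U_{\sigma, N''} \;\cong\; X_{\Sigma_0, N_0} \times U_{\sigma, N''},
$$
the right-hand side being a trivial family over $U_{\sigma, N''}$ via the second projection.

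Next I would transport the fibre computation through $g$. Choose $x'' \in U_{\sigma, N''}$ with $g(x'') = x$, which is possible because $g$ is surjective. Since everything is of finite type over $\C$, the residue fields satisfy $\kappa(x) = \kappa(x'') = \C$, so, writing the inclusion of $x$ as the composite of $x'' \colon \Spec \C \to U_{\sigma, N''}$ with $g$ and using transitivity of fibred products,
$$
\pi^{-1}(x) \;\cong\; X_{\Sigma'(\sigma)} \times_{U_{\sigma}} \Spec \C \;\cong\; \bigl( X_{\Sigma'(\sigma)} \times_{U_{\sigma}} U_{\sigma, N''} \bigr) \times_{U_{\sigma, N''}} \Spec \C ,
$$
the last fibre being taken along $x''$. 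Substituting the displayed isomorphism and using that $X_{\Sigma_0, N_0} \times U_{\sigma, N''} \to U_{\sigma, N''}$ is the trivial family, the right-hand side is $X_{\Sigma_0, N_0}$; thus $\pi^{-1}(x) \cong X_{\Sigma_0, N_0}$ for every $x \in X_{\Sigma}$.

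The only step that needs care is the displayed $U_{\sigma, N''}$-isomorphism, but this is precisely the content of the commutative diagram recalled before the statement — namely Theorem \ref{theo:fiber-bundle} applied to the split local model $\Sigma'(\sigma)$ with respect to the lattices $\phi^{-1}(N'')$ and $N''$ (note $\Sigma'(\sigma) \cap \Sigma_0 = \Sigma_0$, since every cone of $\Sigma_0$ maps to $\{0\} \subseteq \sigma$) — so no new work is required there. The remaining ingredients, the identification $\pi^{-1}(U_{\sigma}) = X_{\Sigma'(\sigma)}$ and the finiteness (in particular surjectivity) of $g$, are standard facts about toric morphisms, and everything else is formal manipulation of fibred products.
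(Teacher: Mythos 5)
Your argument is correct and is precisely the one the paper intends: the corollary is stated as an immediate consequence of the preceding paragraph, namely the identification of $X_{\Sigma'(\sigma)} \times_{U_{\sigma}} U_{\sigma, N''}$ with the trivial family $X_{\Sigma_0, N_0} \times U_{\sigma, N''}$, and your write-up simply spells out the base-change and transitivity-of-fibred-products bookkeeping (together with the surjectivity of the finite map $U_{\sigma, N''} \to U_{\sigma}$) that the paper leaves implicit. Nothing further is needed.
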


\subsubsection{Blowups}\label{sec:blowup-of-variety}
We recall here the characterization of the blowup of a toric variety along an invariant subvariety.
Let $\Sigma$ be a fan in $N_{\R}$ and assume $\tau \in \Sigma$ with $\dim \tau \geq 2$ has
the property that all cones of $\Sigma$ containing $\tau$ are smooth. Let
$u_{\tau} = \sum_{\rho \in \tau(1)}{u_{\rho}}$ and for each cones $\sigma \in \Sigma$
containing $\tau$, set
$$
\Sigma_{\sigma}^{\ast}(\tau) = \{ \Cone(A) : A \subseteq\{u_{\tau} \} \cup \sigma(1)
~ \text{and} ~ \tau(1) \nsubseteq A \} ~.
$$
The {\it star subdivision} of $\Sigma$ relative to $\tau$ is the fan
$$
\Sigma^{\ast}(\tau) = \{ \sigma \in \Sigma : \tau \nsubseteq \sigma \} \cup
\bigcup_{\tau \subseteq \sigma} \Sigma_{\sigma}^{\ast}(\tau) ~.
$$
A fan $\Sigma'$ {\it refines} $\Sigma$ if every cone of $\Sigma'$ is contained in a cone of $\Sigma$ and $|\Sigma'| = |\Sigma|$. When $\Sigma'$ refines $\Sigma$, the identity mapping $\phi = \Id_N$ is compatible with $\Sigma'$ and $\Sigma$.
So there is a toric morphism $\pi : X_{\Sigma^{\ast}(\tau)} \rightarrow X_{\Sigma}$.
Under $\pi$, $X_{\Sigma^{\ast}(\tau)}$ is the blowup of $X_{\Sigma}$ along $V(\tau)$ and the exceptional divisor $D_0$ of $\pi$ is the divisor corresponding to the ray $\Cone(u_{\tau})$ of $\Sigma^{\ast}(\tau)$.

\subsection{Reflexive sheaves}
Let $X$ be a smooth toric variety associated to a fan $\Sigma$ in $N_\R$. Recall that a reflexive sheaf on $X$ is a coherent sheaf $\cE$ that is canonically isomorphic to its double dual $\cE^{\vee\vee}$.

%We refer to \cite[Section 2.2.2]{Per03}
Let $\theta : T \times X \rightarrow X$ be the action of $T$ on $X$,
$\mu : T \times T \rightarrow T$ the group multiplication, $p_2 : T \times X \rightarrow X$
the projection onto the second factor and
$p_{23} : T \times T \times X \rightarrow T \times X$ the projection onto the second and
the third factor. We call a sheaf $\cE$ on $X$ {\it equivariant} if it is equipped with
an isomorphism $\Phi : \theta^{\ast} \cE \rightarrow p_{2}^{\ast} \cE$ such that
\begin{equation}\label{eq:cocyle-equivariant-sheaf}
(\mu \times \Id_X)^{\ast} \Phi = p_{23}^{\ast} \Phi \circ
(\Id_T \times \theta)^{\ast} \Phi ~.
\end{equation}

Klyachko gave a description of torus equivariant reflexive sheaves over toric varieties in terms of combinatorial data \cite{Kly90}:
\begin{defn}
 \label{def:family of filtrations}
 A family of filtrations $\E$ is the data of a finite dimensional vector space $E$ and for each ray $\rho\in\Sigma(1)$, an increasing filtration $(E^\rho(i))_{i\in\Z}$ of $E$ such that $E^\rho(i)=\lbrace 0 \rbrace$ for $i\ll 0$ and $E^\rho(i)=E$ for some $i$. 
% We will denote by $\iota_\rho$ the smallest $i\in \Z$ such that $E^rho(i)\neq 0$.
 \end{defn}
 \begin{rem}
 Note that we are using increasing filtrations here, as in \cite{Per03}, rather than decreasing as in \cite{Kly90}.
\end{rem}
To a family of filtrations $\E:=
\left( E, \, \{ E^{\rho}(j) \}_{\rho \in \Sigma(1), \, j \in \Z} \right)$,  we can assign an equivariant reflexive sheaf $\cE:=\fK(\E)$ defined by 
 \begin{equation}
  \label{eq:sheaf from family of filtrations}
  \Gamma(U_{\sigma}, \cE):=\bigoplus_{m\in M} \bigcap_{\rho\in\sigma(1)} E^\rho(\langle m,u_\rho\rangle)\otimes \chi^m
 \end{equation}
 for all positive dimensional cones $\sigma\in\Sigma$, while $\Gamma(U_{\lbrace 0\rbrace},\cE)=E\otimes \mathbb{C}[M]$. The morphisms between families of filtrations are linear maps preserving the filtrations. Then, by \cite{Kly90}, the functor $\fK$ induces an equivalence of categories between the families of filtrations and equivariant reflexive sheaves over $X$.

\begin{notation}\label{nota:decomposition-of-epuiv-sheaf}
For any $\sigma \in \Sigma$, we denote $\Gamma(U_\sigma,\cE)$ by $E^\sigma$ and we set
$$E^\sigma = \bigoplus_{m \in M}{E_{m}^\sigma \otimes \chi^m} ~.$$
For any $\rho \in \Sigma(1)$ and $m \in M$, we set
$E_{m}^{\rho} = E^{\rho}(\< m, u_\rho \>)$.
\end{notation}

\begin{example}[Tangent sheaf]\label{examp:filtration-tangent}
The family of filtrations of the tangent sheaf $\cT_X$ of $X$ is given by
$$
E^{\rho}(j) = \left\lbrace
\begin{array}{ll}
0 & \text{if}~ j< -1 \\
\Span(u_{\rho}) & \text{if}~ j = -1 \\
N \otimes_{\Z} \C & \text{if} ~ j> -1
\end{array}
\right. ~.
$$
\end{example}

\subsection{Some stability notions}
For this part, we refer to \cite{Tak72} and \cite[Section 4.1]{CT22}.
Let $\cE$ be a torsion-free coherent sheaf on $X$. The {\it degree} of $\cE$ with
respect to an ample class $L \in \Amp(X)$ is the real number obtained by intersection:
$$
\deg_{L}(\cE)= c_1(\cE) \cdot L^{n-1}
$$
and its {\it slope} with respect to $L$ is given by
$$
\mu_{L}(\cE) = \dfrac{ \deg_{L}(\cE) }{\rk(\cE) }.
$$

\begin{defn}
A torsion-free coherent sheaf $\cE$ is said to be {\it slope semistable} (or
{\it semistable} for short) with respect to $L \in \Amp(X)$ if for any proper coherent
subsheaf of lower rank $\cF$ of $\cE$, one has
$$
\mu_{L}(\cF) \leq \mu_{L}(\cE) .
$$
When strict inequality always holds, we say that $\mathcal{E}$ is {\it stable}.
Finally, $\cE$ is said to be {\it polystable} if it is the direct sum of
stable subsheaves of the same slope.
\end{defn}

If $\cE$ is an equivariant reflexive sheaf on a normal toric variety $X$, according to
\cite[Proposition 4.13]{Koo11} and \cite[Proposition 2.3]{HNS19}, it is enough to
test slope inequalities for equivariant and reflexive saturated subsheaves.

\begin{prop}\label{prop:slope-with-saturated-sheaf}
Let $\cE$ be an equivariant reflexive sheaf on $X$. Then $\cE$ is semistable
(resp. stable) with respect to $L$ if and only if for all saturated equivariant
reflexive subsheaves $\cF$ of $\cE$, $\mu_L(\cF)\leq \mu_L(\cE)$ (resp. $\mu_L(\cF)< \mu_L(\cE)$).
\end{prop}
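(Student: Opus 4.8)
The two ``only if'' directions are immediate: a saturated equivariant reflexive subsheaf is in particular a coherent subsheaf of rank $<\rk(\cE)$, so the slope inequalities forced by (semi)stability apply to it. The plan for the two ``if'' directions is to argue by contraposition: assuming $\cE$ is not semistable (resp.\ not stable) with respect to $L$, I will manufacture an \emph{equivariant, reflexive, saturated} subsheaf $\cF\subseteq\cE$ of rank $<\rk(\cE)$ with $\mu_L(\cF)>\mu_L(\cE)$ (resp.\ $\mu_L(\cF)\geq\mu_L(\cE)$), contradicting the assumed inequalities. As a starting point, by hypothesis there is some coherent $\cF_0\subseteq\cE$ of rank $r<\rk(\cE)$ witnessing the relevant failure — for instance the maximal destabilising subsheaf (the first step of the Harder--Narasimhan filtration) in the unstable case, or any rank$<\rk(\cE)$ subsheaf of slope $\geq\mu_L(\cE)$ in the strictly semistable case. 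The construction then has two steps: first replace $\cF_0$ by a $T$-invariant subsheaf of the same rank and degree, then saturate.

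\emph{Step 1 (making it equivariant).} Let $P$ be the Hilbert polynomial, computed with respect to a fixed very ample multiple of $L$, of the quotient $\cE/\cF_0$, and let $Q=\mathrm{Quot}_X(\cE,P)$ be the corresponding Quot scheme. Then $Q$ is projective and non-empty (it contains $[\cE\twoheadrightarrow\cE/\cF_0]$), and the $T$-equivariant structure $\Phi$ on $\cE$ induces an algebraic $T$-action on $Q$ — this is the action whose fixed locus is the equivariant Quot scheme used in \cite{Koo11}. Since $T$ is a torus, hence a connected solvable group, Borel's fixed point theorem provides a $T$-fixed point $[\cE\twoheadrightarrow\cQ_1]\in Q$. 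Its kernel $\cF_1\subseteq\cE$ is then a $T$-invariant subsheaf of $\cE$, and as such it inherits a $T$-equivariant structure by restricting $\Phi$, the cocycle identity \eqref{eq:cocyle-equivariant-sheaf} being automatic. Because $\cQ_1$ has the same Hilbert polynomial $P$ as $\cE/\cF_0$, it has the same rank and the same degree, so $\rk(\cF_1)=r<\rk(\cE)$ and $\mu_L(\cF_1)=\mu_L(\cF_0)$.

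\emph{Step 2 (saturating).} Set $\cF:=\ker\bigl(\cE\to(\cE/\cF_1)/\Tor(\cE/\cF_1)\bigr)$, the saturation of $\cF_1$ in $\cE$. The torsion subsheaf of $\cE/\cF_1$ is canonical, hence $T$-invariant, so $\cF$ is again $T$-equivariant; it is saturated by construction; and it is reflexive because a saturated subsheaf of a reflexive sheaf over a normal variety is reflexive — indeed $\cF\hookrightarrow\cF^{\vee\vee}\hookrightarrow\cE^{\vee\vee}=\cE$, the second arrow being injective since its kernel is torsion inside the torsion-free sheaf $\cF^{\vee\vee}$, and then $\cF^{\vee\vee}/\cF$ is torsion inside the torsion-free sheaf $\cE/\cF$, hence zero. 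Finally $\rk(\cF)=r<\rk(\cE)$, and $\cF/\cF_1$ is a torsion sheaf whose first Chern class is effective (or zero), so intersecting with $L^{n-1}$ and using ampleness of $L$ gives $\deg_L(\cF)\geq\deg_L(\cF_1)$, whence $\mu_L(\cF)\geq\mu_L(\cF_1)=\mu_L(\cF_0)$. This $\cF$ is the required equivariant reflexive saturated subsheaf, which completes the contrapositive in both the semistable and the stable statements.

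\textbf{Main obstacle.} Everything outside Step 1 is routine: projectivity of $Q$, Borel's theorem, the facts that saturation commutes with taking $T$-invariants, does not decrease the slope, and turns a subsheaf of a reflexive sheaf on a normal variety into a reflexive subsheaf. The delicate point is Step 1, namely making precise that the equivariant structure on $\cE$ really does induce a $T$-action on $Q$, that a $T$-fixed point of this action corresponds to an honestly $T$-\emph{invariant} subsheaf of $\cE$ (not merely one abstractly isomorphic to its translates), and that $T$-invariance of a subsheaf genuinely upgrades to an equivariant structure compatible with that of $\cE$ in the sense of \eqref{eq:cocyle-equivariant-sheaf}; all of this is a matter of unwinding definitions, but it is where the torus hypothesis is truly used. (As an alternative to invoking Borel's theorem on $Q$, one may argue more concretely by forming iterated limits $\lim_{t\to 0}\lambda(t)\cdot\cF_0$ inside $Q$ along one-parameter subgroups $\lambda$ whose images span $N$, which produces the same $T$-invariant subsheaf.)
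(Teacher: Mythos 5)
Your argument is correct, but note that the paper does not actually prove this proposition: it is quoted directly from \cite[Proposition 4.13]{Koo11} and \cite[Proposition 2.3]{HNS19}, so there is no in-text proof to match. Your contrapositive scheme (equivariantize, then saturate) is exactly the right strategy, and both halves are sound: saturation of a $T$-invariant subsheaf is $T$-invariant, does not decrease the degree (the quotient $\cF/\cF_1$ is torsion with effective $c_1$), and yields a reflexive sheaf on a normal variety by the torsion-free-quotient argument you give. The route you take for the equivariantization step is, however, heavier than what the cited references need for the semistable direction: there one simply observes that the maximal destabilising subsheaf is \emph{unique}, hence preserved by every translation isomorphism $\Phi_t\colon t^*\cE\xrightarrow{\sim}\cE$, hence $T$-invariant — no Quot scheme or Borel fixed point theorem required. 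Your Quot-plus-Borel argument is the natural way to handle the strictly-semistable (equal-slope) case, where no canonical destabiliser is available, and it does work; the points you flag as delicate (that $\Phi$ induces an algebraic $T$-action on $\mathrm{Quot}_X(\cE,P)$, that a scheme-theoretic fixed point gives an honestly invariant kernel inheriting an equivariant structure satisfying \eqref{eq:cocyle-equivariant-sheaf}) are genuinely the content of the equivariant Quot-scheme construction used in \cite{Koo11}, and your parenthetical remark that the Hilbert polynomial is $T$-invariant deserves the one-line justification that a connected group acts trivially on $\Pic(X)$, so $t^*\cO_X(L)\cong\cO_X(L)$. In short: a complete write-up along your lines would be a valid self-contained proof, slightly more uniform (and slightly more machinery-dependent) than the uniqueness-based arguments in the sources the paper cites.
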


Let $\cE$ be an equivariant reflexive sheaf on a normal toric variety $X$ given by the
family of filtrations
$\left( E, \, \{ E^{\rho}(j) \}_{\rho \in \Sigma(1), \, j \in \Z} \right)$. From the previous proposition, it is crucial for us to understand the description of equivariant reflexive and saturated subsheaves of $\cE$ in terms of families of filtrations. This is the content of the following lemma.

\begin{lem}[{\cite{DDK20b}}]
\label{lem:saturated-subsheaf}
Let $F$ be a vector subspace of $E$ and $\cF$ an equivariant reflexive subsheaf of
$\cE$ given by the family of filtrations $(F, \, \{ F^\rho(i) \})$ with
$F^{\rho}(i) \subseteq E^{\rho}(i)$.
Then $\cF$ is saturated in $\cE$ if and only if for all $\rho \in \Sigma(1), i \in \Z$,
$$F^\rho(i) = E^\rho(i) \cap F.$$
\end{lem}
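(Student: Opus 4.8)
The plan is to translate the saturation condition into the language of families of filtrations via the correspondence $\fK$, and to reduce everything to the affine chart $U_\sigma$ of a maximal cone. Recall that $\cF$ is saturated in $\cE$ means that the quotient $\cE/\cF$ is torsion-free, equivalently that for every associated prime (equivalently, every codimension-one point, since $\cE/\cF$ is a subsheaf of the reflexive sheaf $\cE$ modulo its torsion — or rather, since $\cE$ is reflexive its associated primes have codimension $\leq 1$) the localisation behaves correctly. In the toric equivariant setting, the torsion of $\cE/\cF$ is itself equivariant, so it is enough to detect it at the generic points of the invariant divisors $D_\rho$, $\rho\in\Sigma(1)$; this is where the local rings $\cO_{X,D_\rho}$, which are DVRs with the filtration $E^\rho(\bullet)$ encoding the valuation, enter. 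Thus the first step is to record that $\cF$ is saturated in $\cE$ if and only if, for each $\rho\in\Sigma(1)$, the stalk of $\cF$ at the generic point of $D_\rho$ equals the intersection of the stalk of $\cE$ there with the generic fibre $F\subseteq E$.

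Next I would make this explicit. Fix $\rho\in\Sigma(1)$ and let $\sigma$ be any cone with $\rho\preceq\sigma$; by \eqref{eq:sheaf from family of filtrations}, $\Gamma(U_\sigma,\cE)=\bigoplus_{m\in M}\bigl(\bigcap_{\rho'\in\sigma(1)}E^{\rho'}(\langle m,u_{\rho'}\rangle)\bigr)\otimes\chi^m$, and localising at the generic point of $D_\rho$ kills all the conditions coming from rays $\rho'\neq\rho$, leaving exactly $\bigoplus_{m\in M}E^\rho(\langle m,u_\rho\rangle)\otimes\chi^m$, i.e. the filtration $E^\rho(\bullet)$ viewed as a lattice in $E\otimes\C(X)$ over the DVR $\cO_{X,D_\rho}$. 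The same computation applies to $\cF$ with $F^\rho(\bullet)$. So the saturation condition at $D_\rho$ becomes: the $\cO_{X,D_\rho}$-module generated by $F^\rho(\bullet)$ equals $(E^\rho(\bullet))\cap (F\otimes\C(X))$ inside $E\otimes\C(X)$. Unwinding the grading by $M$ (equivalently, filtering by the valuation $i=\langle m,u_\rho\rangle$, which ranges over all of $\Z$ as $m$ varies, since $u_\rho$ is primitive), this is precisely the statement that $F^\rho(i)=E^\rho(i)\cap F$ for all $i\in\Z$.

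Finally I would assemble the two implications. For ``$\Leftarrow$'': if $F^\rho(i)=E^\rho(i)\cap F$ for all $\rho,i$, then at each $D_\rho$ the stalk of $\cF$ is saturated in that of $\cE$ by the computation above, hence $\cE/\cF$ has no torsion supported on any $D_\rho$; since $\cE/\cF$ is a subsheaf of the reflexive-hence-$S_2$ sheaf... more precisely since any torsion subsheaf of $\cE/\cF$ would be equivariant and supported in codimension $\geq 1$, hence (having no component along any $D_\rho$) supported in codimension $\geq 2$, but the torsion of a quotient of a reflexive sheaf by a reflexive subsheaf, when the quotient has positive rank zero... one argues that $\cF$ being reflexive and $\cF\subseteq\cE$ with equality of stalks in codimension one forces $\cF$ to be the saturation of itself. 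For ``$\Rightarrow$'': if $\cF$ is saturated, then $\cE/\cF$ is torsion-free, so in particular has no torsion at any $D_\rho$, which by the local computation gives $F^\rho(i)=E^\rho(i)\cap F$. The main obstacle I anticipate is the bookkeeping in the first paragraph: namely justifying cleanly that saturation — a codimension-one condition — is detected purely at the invariant prime divisors $D_\rho$, using equivariance of the torsion subsheaf together with the orbit-cone correspondence (Theorem \ref{theo:orbit-cone}); once that is in place, the filtration computation is a direct application of \eqref{eq:sheaf from family of filtrations}.
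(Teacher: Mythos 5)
Your proposal is correct in outline but takes a genuinely different route from the paper. The paper works chart by chart: on each $U_\sigma$ it checks torsion-freeness of $\Gamma(U_\sigma,\cE)/\Gamma(U_\sigma,\cF)$ directly from the $M$-grading, using that $F^\sigma_m=E^\sigma_m\cap F$ follows from the ray-wise condition; for the converse it exhibits an explicit torsion element on $U_\rho$ when the filtration condition fails at some minimal $i_0$. You instead reduce saturation to a codimension-one statement at the generic points of the $D_\rho$, where the DVR/lattice picture converts it into the filtration condition. Your ``$\Rightarrow$'' direction and the local computation at $D_\rho$ are fine (they amount to the paper's computation on the single chart $U_\rho$). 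What you buy is a cleaner conceptual statement; what the paper buys is that its direct verification on every $U_\sigma$ needs no auxiliary input about reflexive sheaves.

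The one step you must firm up is the claim, made already in your first paragraph, that torsion of $\cE/\cF$ is detected at the invariant divisors. Equivariance alone does not give this: the torsion subsheaf is equivariant, so its support is a union of orbit closures $V(\sigma)$, but nothing prevents all of these from having codimension $\geq 2$ (and indeed the hypothesis that $\cF$ is \emph{reflexive} is essential here -- for a general equivariant coherent subsheaf the codimension-one comparison would not suffice). Your final paragraph gestures at the right fix but leaves it as an ellipsis; written out it is: let $\cF^{\mathrm{sat}}$ denote the saturation of $\cF$ in $\cE$, so that $\cF^{\mathrm{sat}}/\cF$ is the torsion of $\cE/\cF$. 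Since $\cF^{\mathrm{sat}}$ is the kernel of a map from the reflexive sheaf $\cE$ to the torsion-free sheaf $\cE/\cF^{\mathrm{sat}}$, it is reflexive; your codimension-one computation shows $\cF$ and $\cF^{\mathrm{sat}}$ agree on the big open set $X\setminus\bigcup_{\dim\sigma\geq 2}V(\sigma)$; and two reflexive sheaves on a normal variety agreeing outside a closed subset of codimension $\geq 2$ coincide, since each equals the pushforward of its restriction to such an open set. Hence $\cF=\cF^{\mathrm{sat}}$. With that paragraph inserted, your argument is complete and correct.
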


\begin{proof}
As $\cF$ is an equivariant sheaf, the quotient sheaf $\cE / \cF$ is equivariant.
For any $\rho \in \Sigma(1)$, one has
$\Gamma(U_\rho, \cE / \cF) = \Gamma(U_\rho, \cE)/ \Gamma(U_\rho, \cF)$
and
$$
\Gamma(U_\rho, \cE/ \cF)_{m} =
E^\rho( \<m, u_\rho \>)/ F^\rho(\<m, u_\rho \>) \otimes \chi^{m}
$$
for any $m \in M$.

We assume that there is $(\rho, \, i) \in \Sigma(1) \times \Z$ such that
$F^\rho(i) \neq E^{\rho}(i) \cap F$. Let
$$
i_0 = \min \{ i \in \Z : F^\rho(i) \neq E^\rho(i) \cap F \}
$$
and $v_\rho \in M$ such that $M_{\R} = \R v_\rho \oplus \Span(u_{\rho})^\perp$ with
$\<v_\rho, u_\rho \> = 1$. There is $e \in F \cap (E^\rho(i_0) \setminus F^\rho(i_0))$
such that $0 \neq \overline{e} \in E^\rho(i_0) / F^\rho(i_0)$ where $\overline{e}$
denotes the image of $e$ in $E^{\rho}(i_0)/ F^{\rho}(i_0)$.
Then, as $e \in F$ and as $F^\rho(i)=F$ for $i$ large enough, there is $m \in S_\rho$
with $\<m + i_0 v_\rho, u_\rho \> = i$, such that
$$
e \otimes \chi^{m + i_0 v_\rho} \in F^\rho(\<m + i_0 \, v_\rho, u_\rho \>) \otimes
\chi^{m + i_0 v_\rho} = F \otimes \chi^{m + i_0 v_\rho}.
$$
Thus $\overline{e} \otimes \chi^{m + i_0 v_\rho} = 0$ in $\Gamma(U_{\rho}, \cE / \cF)$.
Hence, $\cE / \cF$ has nonzero torsion.
Therefore, if $\cE / \cF$ is torsion-free, then $F^\rho(i) = E^\rho(i) \cap F$ for
any $(\rho, i) \in \Sigma(1) \times \Z$.

We now assume that for any $(\rho, i) \in \Sigma(1) \times \Z$,
$F^\rho(i) = E^\rho(i) \cap F$. We use Notation \ref{nota:decomposition-of-epuiv-sheaf}.
For any $\sigma \in \Sigma$, we set
$$
E^\sigma = \Gamma(U_\sigma, \cE) = \bigoplus_{m \in M} E_{m}^\sigma \otimes \chi^m
\quad \text{and} \quad
F^\sigma = \Gamma(U_\sigma, \cF) = \bigoplus_{m \in M} F_{m}^\sigma \otimes \chi^m .
$$
We will show that $E^\sigma / F^\sigma$ is a torsion-free $\C[S_\sigma]$-module.
Let $e \in E_{m}^{\sigma}$ and $\overline{e}$ be its image in
$E_{m}^\sigma / F_{m}^\sigma$ such that there is $m' \in S_\sigma$ with
$\overline{e} \otimes \chi^{m+m'} = 0$
in $\Gamma(U_\sigma, \cE)_{m+m'} / \Gamma(U_\sigma, \cF)_{m+m'}$.
We have
$$
e \otimes \chi^{m+m'} \in \Gamma(U_\sigma, \cF)_{m+m'}
$$
and then
$e \in F_{m+m'}^\sigma \subseteq F$. As $F_{m}^\sigma = E_{m}^\sigma \cap F$, we get
$e \in F_{m}^\sigma$. Hence, $E^\sigma / F^\sigma$ is a torsion-free
$\C[S_\sigma]$-module. Therefore, $\cE / \cF$ is torsion-free.
\end{proof}

\begin{notation}
\label{notation:subsheaf}
Let $F$ be a vector subspace of $E$. We denote by $\cE_F$ the saturated subsheaf of
$\cE$ defined by the family of filtrations $\left( F, \, \{ F^{\rho}(j) \} \right)$
where $F^{\rho}(j)= F \cap E^{\rho}(j).$
\end{notation}

By \cite[Corollary 3.18]{Koo11}, the first Chern class of an equivariant reflexive sheaf
$\cE$ with family of filtrations $(E, \{ E^{\rho}(j) \})$ is given by
\begin{equation}\label{eq:c1-sheaf}
c_1(\cE) = - \sum_{\rho \in \Sigma(1)} \iota_{\rho}(\cE) \, D_{\rho}
\end{equation}
where
$$
\iota_{\rho}(\cE) = \sum_{j \in \Z}{j \left( \dim(E^{\rho}(j)) - \dim(E^{\rho}(j-1)) \right)}.
$$
Therefore, for any $L \in \Amp(X)$,
\begin{equation}\label{eq:slope-sheaf}
\mu_{L}(\cE) = - \dfrac{1}{\rk(\cE)} \sum_{\rho \in \Sigma(1)} \iota_{\rho}(\cE)
\deg_{L}(D_{\rho}) .
\end{equation}
Thanks to Lemma \ref{lem:saturated-subsheaf}, if $\cE$ is an equivariant reflexive sheaf,
we have the following control on the number of values used in comparing slopes:

\begin{lem}\label{lem:slope-number-vector-space}
The set $\{ \mu_{L}(\cE_F) : F \subseteq E ~\text{with}~ 0< \dim F < \dim E \}$ is finite.
\end{lem}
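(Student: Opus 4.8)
The plan is to show that the slope $\mu_L(\cE_F)$ depends on $F\subseteq E$ only through a finite amount of combinatorial data, namely, for each ray $\rho\in\Sigma(1)$, the sequence of dimensions $\dim(F\cap E^\rho(j))$ as $j$ ranges over $\Z$. First I would recall from \eqref{eq:slope-sheaf} that
$$
\mu_L(\cE_F) = -\frac{1}{\dim F}\sum_{\rho\in\Sigma(1)}\iota_\rho(\cE_F)\,\deg_L(D_\rho),
$$
and that by Notation \ref{notation:subsheaf} the family of filtrations of $\cE_F$ is $(F,\{F\cap E^\rho(j)\})$, so that $\iota_\rho(\cE_F)=\sum_{j\in\Z} j\big(\dim(F\cap E^\rho(j))-\dim(F\cap E^\rho(j-1))\big)$. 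Since the ample class $L$, the divisors $D_\rho$ and the fan $\Sigma$ are fixed, the slope is a function only of the integers $\dim F$ and $\iota_\rho(\cE_F)$, $\rho\in\Sigma(1)$.

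Next I would bound the possible values of these integers. The value $\dim F$ lies in the finite set $\{1,\ldots,\dim E-1\}$. For a fixed ray $\rho$, the filtration $(E^\rho(j))_{j\in\Z}$ jumps only at finitely many indices — say $E^\rho(j)=0$ for $j<a_\rho$ and $E^\rho(j)=E$ for $j\geq b_\rho$ — so the induced filtration $(F\cap E^\rho(j))_{j}$ also stabilizes: it is $0$ for $j<a_\rho$ and equals $F$ for $j\geq b_\rho$. Hence $\iota_\rho(\cE_F)$ is a finite sum of the form $\sum_{a_\rho\leq j\leq b_\rho} j\,d_j^\rho$ where $d_j^\rho:=\dim(F\cap E^\rho(j))-\dim(F\cap E^\rho(j-1))\geq 0$ and $\sum_j d_j^\rho=\dim F\leq \dim E$. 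There are only finitely many tuples $(d_j^\rho)_{a_\rho\leq j\leq b_\rho}$ of nonnegative integers summing to at most $\dim E$, hence only finitely many possible values of $\iota_\rho(\cE_F)$, for each of the finitely many rays $\rho\in\Sigma(1)$.

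Combining these bounds, the tuple $\big(\dim F,(\iota_\rho(\cE_F))_{\rho\in\Sigma(1)}\big)$ ranges over a finite set as $F$ varies, and $\mu_L(\cE_F)$ is determined by this tuple via \eqref{eq:slope-sheaf}; therefore the set $\{\mu_L(\cE_F): 0<\dim F<\dim E\}$ is finite. I do not anticipate a serious obstacle here: the only point requiring slight care is to note explicitly that each filtration $(E^\rho(j))_j$ has only finitely many jumps (which is part of Definition \ref{def:family of filtrations}), so that each $\iota_\rho(\cE_F)$ is a genuinely finite sum bounded in terms of $\dim E$ alone.
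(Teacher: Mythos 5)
Your proposal is correct and follows essentially the same argument as the paper: both proofs observe that the induced filtration $F\cap E^\rho(j)$ stabilizes over the same finite range of indices as $E^\rho(j)$, that the jump dimensions are nonnegative integers bounded by $\dim E$, hence each $\iota_\rho(\cE_F)$ takes only finitely many values, and then conclude via Formula (\ref{eq:slope-sheaf}). No issues.
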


\begin{proof}
For any $\rho \in \Sigma(1)$, there is $(j_{\rho}, J_\rho) \in \Z^2$ such that
$E^{\rho}(j) = \{0\}$ if $j < j_\rho$ and $E^{\rho}(j) = E$ if $j \geq J_{\rho}$.
For any vector subspace $F$ of $E$, we have
$$
\iota_{\rho}(\cE_F) = \sum_{j= j_\rho}^{J_{\rho}}{j \left( \dim(E^{\rho}(j)\cap F) -
\dim(E^{\rho}(j-1)\cap F) \right)} .
$$
As the set
$\{\dim(E^{\rho}(j)\cap F) - \dim(E^{\rho}(j-1)\cap F): F \subsetneq E, \, j \in \Z \}$
is finite, we deduce that $\{ \iota_{\rho}(\cE_F) : F \subsetneq E \}$ is finite.
We can conclude using Formula (\ref{eq:slope-sheaf}).
\end{proof}

\section{Proof of the main result}\label{sec:mainresult}

Let $N$ and $N'$ be two lattices having respectively $M$ and $M'$ for dual lattices.
Let $\Sigma$ be a complete fan in $N_{\R}$ and $\Sigma'$ a complete fan in $N'_{\R}$.
We denote by $X$ (resp. $X'$) the toric variety associated to the fan $\Sigma$
(resp. $\Sigma'$) and $T$ (resp. $T'$) its torus.
Let $\phi: N' \rightarrow N$ be a surjective $\Z$-linear map compatible with $\Sigma'$ and
$\Sigma$, and denote by $\pi: X' \rightarrow X$ the induced toric fibration. Let $\cE$ be an equivariant reflexive sheaf on $X$ and $\cE'= (\pi^\ast \cE)^{\vee \vee}$
its reflexive pullback on $X'$.
In this section, we will give the proofs of Theorem \ref{theo:stablecaseintro}, Theorem
\ref{theo:semistablecaseintro} and Proposition \ref{prop:unstablecaseintro}.

\subsection{Pulling back sheaves on a fibration}
We first describe part of the family of filtrations of the pulled back sheaves.
We will need the following lemmas.

\begin{lem}
Let $\rho \in \Sigma(1)$, there is $\rho' \in \Sigma'(1)$ such that $\phi_{\R}(\rho') = \rho$.
\end{lem}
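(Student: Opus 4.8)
The statement asserts: for every ray $\rho \in \Sigma(1)$ of the base fan, there exists a ray $\rho' \in \Sigma'(1)$ of the total-space fan with $\phi_{\R}(\rho') = \rho$. Here $\phi : N' \to N$ is the surjective lattice map inducing the toric fibration $\pi : X' \to X$, so $\phi_\R$ is compatible with $\Sigma'$ and $\Sigma$ in the sense that each cone of $\Sigma'$ maps into some cone of $\Sigma$. The key structural inputs are: $\phi$ is surjective, both fans are complete, and $\Sigma'$ refines $\Sigma$ via $\phi_\R$ in the sense that $\phi_\R(|\Sigma'|) = N_\R = |\Sigma|$.

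**The plan.** First I would pick a relative-interior vector $u \in \mathrm{relint}(\rho)$, so $u$ spans $\rho$ as a ray, and I would moreover choose $u$ to be a lattice point (e.g. $u = u_\rho$, the primitive generator); this lies in $N$. Since $\phi$ is surjective, there is $w \in N'$ with $\phi(w) = u$. Now I consider the cone $\sigma' \in \Sigma'$ whose relative interior contains $w$ (unique because $\Sigma'$ is a fan covering $N'_\R$, assuming $X'$ complete); by compatibility, $\phi_\R(\sigma')$ is contained in some cone $\sigma \in \Sigma$, and since $u = \phi_\R(w) \in \phi_\R(\mathrm{relint}(\sigma'))$ while $u \in \mathrm{relint}(\rho)$, the cone $\sigma$ must have $\rho$ as a face (the image of $\sigma'$ meets $\mathrm{relint}(\rho)$, forcing $\rho \preceq \sigma$). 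The remaining task is to descend from this possibly higher-dimensional $\sigma'$ to an actual ray mapping onto $\rho$: I would restrict $\phi_\R$ to $\sigma'$, look at the face structure, and argue that among the rays $\sigma'(1)$ of $\sigma'$, at least one maps onto $\rho$. This follows because $\phi_\R|_{\sigma'} : \sigma' \to \sigma$ sends $\sigma'$ onto a subcone containing $u \in \mathrm{relint}(\rho)$, and a point in the relative interior of a one-dimensional face $\rho$ of $\sigma$ can only be hit as a nonnegative combination of the generators $u_{\rho_1'}, \dots, u_{\rho_k'}$ of $\sigma'$ if some $\phi_\R(u_{\rho_i'})$ already lies on the ray $\rho$ (otherwise the combination would escape the extremal ray $\rho$ into the interior of $\sigma$ or another face). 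Picking that $\rho_i'$ — possibly after noting it is nonzero since $u \neq 0$ and $\phi_\R(\rho_i')$ spans $\rho$ — gives the desired $\rho' \in \Sigma'(1)$ with $\phi_\R(\rho') = \rho$.

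**Main obstacle.** The delicate point is the last descent step: showing that a ray $\rho$ of $\sigma$ hit by $\phi_\R(\sigma')$ must be the image of an \emph{edge} of $\sigma'$, not merely contained in the image of the full cone. One clean way to make this rigorous is to use the supporting hyperplane of $\sigma$ along $\rho$: choose $m \in M = N^\vee$ with $\langle m, \cdot\rangle \geq 0$ on $\sigma$ and $\sigma \cap m^\perp = \rho$. Then $\phi^*m = m \circ \phi \in M'$ is nonnegative on $\sigma'$ (by compatibility $\phi_\R(\sigma') \subseteq \sigma$), so $\tau' := \sigma' \cap (\phi^* m)^\perp$ is a face of $\sigma'$, and $\phi_\R(\tau') \subseteq \rho$. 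Since $w$ has $\langle \phi^* m, w\rangle = \langle m, u\rangle = 0$, we get $w \in \tau'$, hence $\tau' \neq \{0\}$ (as $w \neq 0$, using $u = u_\rho \neq 0$), and $\phi_\R$ maps the nonzero face $\tau'$ onto a subset of the ray $\rho$ containing the nonzero vector $u$; therefore $\phi_\R(\tau')$ is exactly $\rho$ (or at least spans it), and any ray $\rho'$ of $\tau'$ on which $\phi_\R$ is nonzero works — such a ray exists because $\phi_\R(\tau') = \rho \ne \{0\}$. I would double-check the edge case where $\phi_\R$ kills some generators of $\tau'$, but since $\tau'$ is a (strongly convex, rational) cone with nonzero image, it has at least one ray with nonzero image, completing the argument.
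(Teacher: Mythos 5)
Your proof is correct and follows essentially the same route as the paper: lift $u_\rho$ to a cone $\sigma'$ of $\Sigma'$ whose image meets $\mathrm{relint}(\rho)$, cut out the face of $\sigma'$ mapping into $\rho$, and pick a ray of that face with nonzero image. Your face $\sigma'\cap(\phi^*m)^\perp$ coincides with the paper's $\sigma'\cap\phi_\R^{-1}(\Span(u_\rho))$, and your supporting-functional justification that it really is a face of $\sigma'$ is in fact a cleaner argument than the paper's bare assertion.
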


\begin{proof}
Let $\rho \in \Sigma(1)$. We first note that there is $\sigma' \in \Sigma'$ such that
$\rho \subseteq\phi_{\R}(\sigma')$.

Let $\sigma' \in \Sigma'$ such that $\rho \subseteq\phi_{\R}(\sigma')$ and
$W = \phi_{\R}^{-1}(\Span(u_{\rho}))$. We set $\sigma'_0 = \sigma' \cap W$. As $\sigma'$ is a strongly convex polyhedral cone, we deduce that $\sigma'_0$ is also a strongly convex polyhedral cone and $\sigma'_0$ is a face of $\sigma'$.
As $\phi$ is compatible with $\Sigma'$ and $\Sigma$, there is $\sigma \in \Sigma$ such that
$\phi_{\R}(\sigma') \subseteq \sigma$. By strong convexity of $\sigma$, $\phi_{\R}(\sigma')$
doesn't contain $\Span(u_{\rho})$; hence, $\phi_{\R}(\sigma'_0) = \rho$.

If for any $\rho' \in \sigma'_0(1)$, $\phi_{\R}(\rho') \neq \rho$, then
$\phi_{\R}(\rho') = \{0\}$ by strong convexity of $\sigma$. By linearity of $\phi_{\R}$, we get $\phi_{\R}(\sigma'_0)= \{0\}$, which is a contradiction. Hence, there is
$\rho' \in \sigma'_0(1) \subseteq\Sigma'(1)$ such that $\phi_{\R}(\rho') = \rho$.
\end{proof}

\begin{lem}[{\cite[Lemma 3.3.21]{CLS}}]\label{lem:image-of-t-morphism}
Given $\sigma' \in \Sigma'$, let $\sigma$ be the minimal cone of $\Sigma$ containing $\phi_{\R}(\sigma')$. Then:
\begin{enumerate}
\item
$\pi \left( \gamma_{\sigma'} \right) = \gamma_{\sigma}$ where
$\gamma_{\sigma'} \in O(\sigma')$ and $\gamma_{\sigma} \in O(\sigma)$ are the
distinguished points.
\item
$\pi (O(\sigma')) = O(\sigma)$ and $\pi (V(\sigma')) = V(\sigma)$.
\end{enumerate}
\end{lem}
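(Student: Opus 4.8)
The plan is to reduce both assertions to the description of a distinguished point $\gamma_\sigma$ as a limit of a one‑parameter subgroup, combined with the defining property that $\pi$ restricted to the big tori is the group homomorphism $T'\to T$ induced by $\phi$. (First note that the minimal cone $\sigma$ is well defined: the intersection of all cones of $\Sigma$ containing $\phi_\R(\sigma')$ is a face of each, hence lies in $\Sigma$, and still contains $\phi_\R(\sigma')$.) For part $(1)$, I would choose a lattice point $u'\in N'$ in the relative interior of $\sigma'$ — such a point exists since $\sigma'$ is rational, e.g. the sum of its ray generators. Writing $\lambda^{u'}\colon \C^\ast\to T'$ for the one‑parameter subgroup attached to $u'$, one has $\gamma_{\sigma'}=\lim_{t\to 0}\lambda^{u'}(t)$ (see \cite[\S3.2]{CLS}). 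Since $\pi$ is toric, $\pi\circ\lambda^{u'}=\lambda^{\phi(u')}$, and since $\pi$ is continuous with $\pi(U_{\sigma'})\subseteq U_\sigma$ (which holds because $\phi_\R(\sigma')\subseteq\sigma$), taking limits inside $U_\sigma$ gives
\[
\pi(\gamma_{\sigma'})=\lim_{t\to 0}\lambda^{\phi(u')}(t).
\]
Thus $(1)$ follows once we know that $\phi_\R(u')$ lies in the relative interior of $\sigma$, for then the right‑hand side is precisely $\gamma_\sigma$ by the same limit formula.

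The heart of the argument, and the step I expect to be the main obstacle, is the purely combinatorial claim: if $\sigma$ is the \emph{minimal} cone of $\Sigma$ with $\phi_\R(\sigma')\subseteq\sigma$, then $\operatorname{relint}\bigl(\phi_\R(\sigma')\bigr)\subseteq\operatorname{relint}(\sigma)$. To see this, recall that $\sigma$ is the disjoint union of the relative interiors of its faces; since $\operatorname{relint}(\phi_\R(\sigma'))$ is connected it must sit inside $\operatorname{relint}(\tau)$ for a single face $\tau\preceq\sigma$. If $\tau\neq\sigma$, pick a point $z$ common to $\operatorname{relint}(\phi_\R(\sigma'))$ and $\operatorname{relint}(\tau)$; for any $w\in\phi_\R(\sigma')$ the point $z$ lies in the relative interior of a segment $[w,w']$ with $w'\in\phi_\R(\sigma')$, and since $\tau$ is a face of $\sigma$ and $z$ is an interior point of that segment with $w,w'\in\sigma$, both endpoints $w,w'$ lie in $\tau$. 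Hence $\phi_\R(\sigma')\subseteq\tau$, contradicting minimality, so $\tau=\sigma$. Because $\phi_\R$ is linear it maps relative interiors onto relative interiors of images, so $\phi_\R(u')\in\operatorname{relint}(\sigma)$; this finishes $(1)$.

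For part $(2)$ I would argue by equivariance. The homomorphism $\pi_{|T'}\colon T'\to T$ is surjective (as $\phi$ is surjective), so using $(1)$,
\[
\pi\bigl(O(\sigma')\bigr)=\pi\bigl(T'\cdot\gamma_{\sigma'}\bigr)=T\cdot\pi(\gamma_{\sigma'})=T\cdot\gamma_\sigma=O(\sigma).
\]
Finally $V(\sigma')=\overline{O(\sigma')}$ is a closed subvariety of the complete variety $X'$, hence complete, so its image $\pi(V(\sigma'))$ is closed in $X$; it contains $O(\sigma)$ and therefore contains $\overline{O(\sigma)}=V(\sigma)$, while continuity of $\pi$ gives the reverse inclusion $\pi(V(\sigma'))\subseteq\overline{\pi(O(\sigma'))}=V(\sigma)$. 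Hence $\pi(V(\sigma'))=V(\sigma)$, and the lemma is proved. Apart from the convex‑geometric step above, every ingredient here — the limit description of distinguished points, continuity and equivariance of $\pi$, and completeness of orbit closures in a complete toric variety — is standard.
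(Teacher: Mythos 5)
Your proof is correct, and it is essentially the standard argument for this lemma, which the paper itself does not prove but imports from \cite[Lemma 3.3.21]{CLS}, adding only the remark that surjectivity of $\phi$ upgrades the inclusions in part (2) to equalities --- exactly what your orbit computation $\pi(T'\cdot\gamma_{\sigma'})=T\cdot\gamma_\sigma$ establishes. One cosmetic point: connectedness of $\operatorname{relint}(\phi_\R(\sigma'))$ does not by itself force it into a single stratum $\operatorname{relint}(\tau)$ of the face decomposition (a connected set can meet several strata), but this step is dispensable, since your subsequent segment argument already shows that if \emph{some} point of $\operatorname{relint}(\phi_\R(\sigma'))$ lies in $\operatorname{relint}(\tau)$ then all of $\phi_\R(\sigma')$ lies in $\tau$, whence $\tau=\sigma$ by minimality and the chosen point lies in $\operatorname{relint}(\sigma)$.
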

Note that in general, the equalities in point $(2)$ of Lemma \ref{lem:image-of-t-morphism} are only inclusions, but as $\pi$ is surjective, we actually get equalities. We can now obtain a partial description of pulled back sheaves in terms of
families of filtrations.

\begin{prop}\label{prop:pullback-sheaf-surjection}
Let $\cE$ be an equivariant reflexive sheaf on $X$ given by the family of filtrations
$\left( E, \, \{ E^{\rho}(j) \}_{\rho \in \Sigma(1), \, j \in \Z} \right)$. Let
$\left( \widetilde{E}, \, \{ \widetilde{E}^{\rho'}(j) \}_{\rho' \in \Sigma'(1), \,
j \in \Z} \right)$ be the family of filtrations of the equivariant sheaf
$(\pi^{\ast} \cE)^{\vee\vee}$. Then we have:
\begin{enumerate}
\item $\widetilde{E} = E \,$.
\item
If $\phi_{\R}(\rho') = \{0 \}$, then
$~
\widetilde{E}^{\rho'}(j) = \left\lbrace
\begin{array}{ll}
\{0\} & \text{if}~ j < 0 \\ E & \text{if}~ j \geq 0
\end{array}
\right. .$
\item
If $\phi_{\R}(\rho')= \rho \in \Sigma(1)$ and $\phi(u_{\rho'}) = b_{\rho} \, u_{\rho} \,$,
then $~\widetilde{E}^{\rho'}(j) = E^{\rho}\left(
\left\lfloor \frac{j}{b_{\rho}} \right\rfloor \right)$.
\end{enumerate}
\end{prop}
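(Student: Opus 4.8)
The plan is to compute the reflexive pullback $(\pi^*\cE)^{\vee\vee}$ directly in terms of families of filtrations, using the description of global sections on affine toric charts given in \eqref{eq:sheaf from family of filtrations}. The key observation is that $\pi : X' \to X$ restricts to a group homomorphism $\pi_{|T'} : T' \to T$ on the dense tori, dual to $\phi : N' \to N$, which transposes to an injection $\phi^* : M \hookrightarrow M'$ (since $\phi$ is surjective). On the open torus, $\pi^*\cE$ restricted to $T'$ is simply $E \otimes_{\C} \C[M']$ via pullback of the character decomposition $E\otimes\C[M]$, so $\widetilde{E} = E$ as the underlying vector space. This gives point (1), and sets up the convention that the filtrations of $(\pi^*\cE)^{\vee\vee}$ are filtrations of $E$ itself.

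For points (2) and (3), first I would handle the local picture over a single ray $\rho' \in \Sigma'(1)$. There are exactly two cases, governed by Lemma \ref{lem:image-of-t-morphism}: either $\phi_{\R}(\rho') = \{0\}$, or $\phi_{\R}(\rho') = \rho$ for some $\rho \in \Sigma(1)$, in which case $\phi(u_{\rho'}) = b_\rho u_\rho$ with $b_\rho \in \Z_{>0}$ (as $u_{\rho'}$ is primitive in $N'$ and $u_\rho$ primitive in $N$). In the first case, the ray $\rho'$ lies in $\ker\phi_\R = (N_0)_\R$, so pulling back along $\pi$ does not impose any filtration condition coming from $\cE$ — the filtration is the ``trivial'' one, jumping from $\{0\}$ to $E$ at $j = 0$, which is exactly the statement that $(\pi^*\cE)^{\vee\vee}$ restricted to this direction is (locally) free with the standard $\cO$-filtration. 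Making this precise amounts to computing $\Gamma(U_{\rho'}, (\pi^*\cE)^{\vee\vee})$: since $\cE$ is locally free of rank $\rk\cE$ in a neighborhood of the generic point of the divisor $D_{\rho'}$ image — more carefully, since reflexivity lets us compute the double dual as sections over codimension-one complements — one identifies $\Gamma(U_{\rho'},(\pi^*\cE)^{\vee\vee})$ with $\bigoplus_{m'\in M'} E^{\rho'_{\mathrm{triv}}}(\langle m', u_{\rho'}\rangle)\otimes\chi^{m'}$ where the only constraint is $\langle m', u_{\rho'}\rangle \geq 0$, i.e. $m'\in S_{\rho'}$.

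In the second case, the constraint from $\cE$ near $D_\rho$ does propagate: a character $m' \in M'$ contributes to $\Gamma(U_{\rho'}, (\pi^*\cE)^{\vee\vee})$ with coefficient $E^\rho(?)$, where the index $?$ is determined by pulling back the character. The computation I expect is: reflexivity means we take sections over the complement of codimension $\geq 2$, and the filtration value at level $j = \langle m', u_{\rho'}\rangle$ is governed by the value of $\cE$'s filtration at the ray $\rho$ ``seen through $\phi$''. Since $\phi(u_{\rho'}) = b_\rho u_\rho$, a section that is $\chi^m$-homogeneous on $X$ pulls back to a $\chi^{\phi^*m}$-homogeneous section on $X'$ with $\langle \phi^* m, u_{\rho'}\rangle = \langle m, \phi(u_{\rho'})\rangle = b_\rho \langle m, u_\rho\rangle$; but not every $m' \in M'$ is of the form $\phi^* m$, and the reflexive hull forces us to take, at level $j$, the union over all $m$ with $b_\rho\langle m, u_\rho\rangle \leq j$ of $E^\rho(\langle m, u_\rho\rangle)$, which is precisely $E^\rho(\lfloor j/b_\rho\rfloor)$ — the floor appearing exactly because $b_\rho\langle m, u_\rho\rangle \leq j \iff \langle m, u_\rho \rangle \leq \lfloor j/b_\rho\rfloor$. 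I would write this out carefully as an equality of $\C[S_{\rho'}]$-modules, checking that the right-hand side is already reflexive (saturated in the sense of Lemma \ref{lem:saturated-subsheaf}) so that no further double-dualizing changes it.

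The main obstacle I anticipate is the bookkeeping in the reflexive hull: $\pi^*\cE$ itself need not be reflexive, so one cannot simply read off its filtrations from a naive base change of \eqref{eq:sheaf from family of filtrations}; one must argue that $(\pi^*\cE)^{\vee\vee}$ agrees with $\pi^*\cE$ away from a codimension-$2$ locus and then compute sections of the latter over that locus, which is where the floor function and the case distinction genuinely emerge. A clean way to organize this is to note that a toric fibration factors, on each chart, through a product with a torus factor (the splitting of \eqref{eq:split-lattice} over $U_\sigma$ up to finite index, as recalled after Theorem \ref{theo:fiber-bundle}), reducing to the two model computations: pullback along a projection $X \times (\text{fiber}) \to X$ (giving case (3) with $b_\rho$ accounting for the finite-index lattice discrepancy) and the structure of the fiber toric variety (giving case (2)). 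Finally I would remark that the proposition only describes the filtrations for rays $\rho' \in \Sigma'(1)$ lying over a ray or the origin — which, by the first Lemma of this subsection, is automatic for the rays mapping onto $\Sigma(1)$, and exhausts all of $\Sigma'(1)$ when $\pi$ is e.g. a locally trivial fibration or a blow-up, the cases of interest in the sequel.
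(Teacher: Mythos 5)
Your proposal follows essentially the same route as the paper: compute $\Gamma(U'_{\rho'},\pi^{*}\cE)$ chart by chart using the dichotomy of Lemma \ref{lem:image-of-t-morphism} ($\pi(U'_{\rho'})=T$ versus $\pi(U'_{\rho'})=U_{\rho}$), and read off the filtration from the resulting $\C[S_{\rho'}]$-module, the floor function coming from $\langle\psi(m),u_{\rho'}\rangle=b_{\rho}\langle m,u_{\rho}\rangle$. Two small corrections. First, the floor is produced by the tensor product $E^{\rho}\otimes_{\C[S_{\rho}]}\C[S_{\rho'}]$ itself, not by the reflexive hull: over the chart of a single ray an equivariant reflexive sheaf is already free, so $\pi^{*}\cE$ is reflexive on $U'_{\rho'}$ and double-dualizing changes nothing there (your own final check makes this honest, but the attribution is misleading). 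Second, your closing remark is wrong for blow-ups: the exceptional ray $\Cone(u_{\tau})$ maps under $\phi=\Id_{N}$ into the interior of the cone $\tau$, hence lands neither on $\{0\}$ nor on a ray of $\Sigma$, which is precisely why the paper needs the separate computation of Proposition \ref{prop:filt-toric-blowup} for that ray.
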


\begin{proof}
For $\sigma \in \Sigma$, we define $U_{\sigma} = \Spec( \C[S_{\sigma}])$ as an affine
open subset of $X$ and for $\sigma' \in \Sigma'$, we define
$U_{\sigma'}' = \Spec( \C[S_{\sigma'}])$ as an affine open subset of $X'$.
The sheaf $\pi^{\ast} \cE$ is defined by
\begin{equation}\label{eq:pullback-sheaf}
\pi^{\ast} \cE = \pi^{-1} \cE \otimes_{\pi^{-1} \cO_{X}} \cO_{X'}
\end{equation}
where for any sheaf $\cF$ on $X$, $\pi^{-1}\cF$ is defined by
$$
\Gamma(U', \, \pi^{-1} \cF) = \varinjlim_{U \supseteq \phi(U')} \Gamma(U, \, \cF) .
$$
We have $\Gamma(T, \, \cE) = E \otimes_{\C} \C[M]$. As $\pi(T') = T$, we deduce that
$$
\Gamma(T', \, \cE') = (E \otimes_{\C} \C[M]) \otimes_{\C[M]} \C[M'] \cong
E \otimes_{\C} \C[M'] ~.
$$
Thus, $\widetilde{E} = E$.

Let $\rho' \in \Sigma'(1)$ such that $\phi_{\R}(\rho') = \{0 \}$. By Lemma
\ref{lem:image-of-t-morphism} we get $\pi( O(\rho')) = T$. Hence, by the Orbit-Cone
Correspondence, $\pi(U_{\rho'}) = T$. By (\ref{eq:pullback-sheaf}), we deduce that
$$
\widetilde{E}^{\rho'} = \Gamma(U'_{\rho'}, \, \cE') =
(E \otimes_{\C} \C[M]) \otimes_{\C[M]} \C[S_{\rho'}] = E \otimes_{\C}
\C[S_{\rho'}] .
$$
If $m \in S_{\rho'}$, then $\widetilde{E}_{m}^{\rho'} = E$ and when $m \notin S_{\rho'}$,
then $\widetilde{E}_{m}^{\rho'} = \{0\}$. That is equivalent to say :
$\widetilde{E}^{\rho}(j) = E$ if $j \geq 0$ and $\widetilde{E}^{\rho}(j) = \{0\}$ if $j < 0$.

We now consider the case where $\phi_{\R}(\rho')= \rho $ and
$\phi(u_{\rho'}) = b_{\rho} \, u_{\rho}$. By Lemma \ref{lem:image-of-t-morphism}, we have
$\pi( O(\rho')) = O(\rho)$ and then $\pi(U'_{\rho'}) = U_{\rho}$. Hence,
$$
\widetilde{E}^{\rho'} = E^{\rho} \otimes_{\C[S_{\rho}]} \C[S_{\rho'}].
$$
As $\phi: N' \rightarrow N$ is surjective, there is an injective map $\psi: M \rightarrow M'$ such that for any $m \in M$ and $u' \in N'$, $\<m, \, \phi(u') \> = \< \psi(m), \, u' \>$.
Let $e_{\rho} \in M$ such that $M_{\R} = \R e_\rho \oplus \Span(u_{\rho})^\perp$ with
$\<e_\rho, u_\rho \> = 1$ and let
$e_{\rho'} \in M'$ such that $M'_{\R} = \R e_{\rho'} \oplus \Span(u_{\rho'})^\perp$ with 
$\<e_{\rho'}, u_{\rho'} \> = 1$. We set $M_0 = \Span(u_{\rho'})^\perp \cap M'$.
There is $m_{\rho} \in M_0$ such that
$$
\psi(e_\rho) = b_{\rho} \, e_{\rho'} + m_\rho .
$$
For any $m' \in M'$, there is $a \in \Z$ and $m_0 \in M_0$ such that
$m' = a e_{\rho'} + m_0$. Let $(a', \, r) \in \Z^2$ such that $a = a' b_{\rho} + r$ with
$0 \leq r < b_{\rho}$,
we have
$$
m' = r e_{\rho'} + a'(b_{\rho} e_{\rho'} + m_\rho) + (m_0 - a' m_\rho) = r e_{\rho'} +
\psi(a' e_\rho) + (m_0 - a' m_\rho) ~.
$$
Thus, $M' = A + \psi(M) + M_0$ where $A = \{ k \,e_{\rho'} : 0 \leq k \leq b_{\rho} - 1 \}$.
Therefore, $S_{\rho'} = A + \psi(S_{\rho}) + M_0$ and
$$
\C[S_{\rho'}] \cong \bigoplus_{m'' \in A} \C[S_{\rho}] \otimes_{\C}(\chi^{m''} \cdot \C[M_0])
$$
where for $m \in S_{\rho}$, $m' \in M_0$ and $m'' \in A$,
$\chi^m \otimes (\chi^{m''} \cdot \chi^{m_0}) = \chi^{m'' + \psi(m) + m_0}$. Thus,
\begin{align*}
\widetilde{E}^{\rho'} & \cong \bigoplus_{m'' \in A} E^{\rho} \otimes_{\C}
(\chi^{m''} \cdot \C[M_0])
\\ & =
\bigoplus_{m'' \in A} \left( \sum_{m \in M, \, m_0 \in M_0} E^{\rho}(\< m, \, u_\rho \>)
\otimes \chi^{m'' + \psi(m) + m_0}
\right) .
\end{align*}
As $\<m, \, u_{\rho} \> \in \Z$, for any $(m'', m_0) \in A \times M_0$,
$$
\<m, \, u_\rho \> = \dfrac{\< \psi(m), \, u_{\rho'} \>}{b_{\rho}} =
\dfrac{\< \psi(m) + m_0, \, u_{\rho'} \>}{b_{\rho}} =
\left\lfloor \dfrac{ \<m'' + \psi(m) + m_0, \, u_{\rho'} \>}{b_{\rho}} \right\rfloor ~.
$$
Thus,
\begin{align*}
\widetilde{E}^{\rho'} & \cong
\sum_{ \substack{m \in M, \, m_0 \in M_0, \\ m'' \in A}} E^{\rho}\left( \left\lfloor
\dfrac{\< m'' + \psi(m) + m_0, \, u_{\rho'} \>}{b_{\rho}} \right\rfloor \right) \otimes
\chi^{m'' + \psi(m) + m_0}
\\ & \cong
\sum_{m' \in M'} E^{\rho}\left( \left\lfloor
\dfrac{ \< m', \, u_{\rho'} \>}{b_{\rho}} \right\rfloor \right) \otimes \chi^{m'} \, ,
\end{align*}
that is $\widetilde{E}^{\rho'}(j) = E^{\rho}\left(
\left\lfloor \frac{j}{b_{\rho}} \right\rfloor \right)$.
\end{proof}

\begin{notation}
Let $F$ be a vector subspace of $E$. We denote by
$\left( F, \, \{ \widetilde{F}^{\rho'}(j) \} \right)$ the family of
filtrations of $(\pi^{\ast} \cE_F)^{\vee \vee}$.
\end{notation}

\begin{cor}\label{cor:pullback-sheaf-surjection}
Let $F$ be a vector subspace of $E$. Then the family of filtrations
$\left( F, \, \{ \widetilde{F}^{\rho'}(j) \} \right)$ satisfies
$\widetilde{F}^{\rho'}(j) = F \cap \widetilde{E}^{\rho'}(j)$ for all $\rho'$ such that
$\phi_{\R}(\rho') \in \{0 \} \cup \Sigma(1)$.
\end{cor}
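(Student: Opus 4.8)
The plan is simply to apply Proposition \ref{prop:pullback-sheaf-surjection} twice: once to $\cE$, producing the filtrations $\widetilde{E}^{\rho'}(j)$, and once to the equivariant reflexive subsheaf $\cE_F$. Recall from Notation \ref{notation:subsheaf} that $\cE_F$ is defined by the family of filtrations $(F, \{F^\rho(j)\})$ with $F^\rho(j) = F \cap E^\rho(j)$; since $\cE_F$ is saturated in the reflexive sheaf $\cE$ (Lemma \ref{lem:saturated-subsheaf}), it is itself equivariant and reflexive, so Proposition \ref{prop:pullback-sheaf-surjection} applies to it and produces exactly the family $(F, \{\widetilde{F}^{\rho'}(j)\})$ attached to $(\pi^{\ast}\cE_F)^{\vee\vee}$.

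The one observation needed is that the data used by Proposition \ref{prop:pullback-sheaf-surjection} to describe the pulled-back filtration along a ray $\rho'$ --- namely whether $\phi_{\R}(\rho') = \{0\}$ or $\phi_{\R}(\rho') = \rho \in \Sigma(1)$, and in the latter case the integer $b_\rho$ defined by $\phi(u_{\rho'}) = b_\rho u_\rho$ --- depends only on $\pi$ and on $\rho'$, not on the sheaf being pulled back. Hence the same alternative and the same $b_\rho$ govern the filtrations of both $(\pi^{\ast}\cE)^{\vee\vee}$ and $(\pi^{\ast}\cE_F)^{\vee\vee}$ along $\rho'$.

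With this in hand the two cases are immediate. If $\phi_{\R}(\rho') = \{0\}$, part (2) of the proposition gives $\widetilde{E}^{\rho'}(j) = E$ and $\widetilde{F}^{\rho'}(j) = F$ for $j \geq 0$, while both vanish for $j < 0$; intersecting with $F$ yields $F \cap E = F$ and $F \cap \{0\} = \{0\}$, so $\widetilde{F}^{\rho'}(j) = F \cap \widetilde{E}^{\rho'}(j)$. If $\phi_{\R}(\rho') = \rho \in \Sigma(1)$, part (3) gives $\widetilde{E}^{\rho'}(j) = E^{\rho}(\lfloor j/b_\rho \rfloor)$ and $\widetilde{F}^{\rho'}(j) = F^{\rho}(\lfloor j/b_\rho \rfloor)$, and by the definition of $\cE_F$ the latter equals $F \cap E^{\rho}(\lfloor j/b_\rho \rfloor) = F \cap \widetilde{E}^{\rho'}(j)$.

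There is no genuine obstacle here: all the content sits in Proposition \ref{prop:pullback-sheaf-surjection}. The only point worth flagging is the reason for restricting to rays with $\phi_{\R}(\rho') \in \{0\} \cup \Sigma(1)$: for a ray $\rho'$ whose image $\phi_{\R}(\rho')$ lands in the relative interior of a higher-dimensional cone of $\Sigma$, Proposition \ref{prop:pullback-sheaf-surjection} provides no control on $\widetilde{E}^{\rho'}$, and in fact the equality $\widetilde{F}^{\rho'}(j) = F \cap \widetilde{E}^{\rho'}(j)$ --- equivalently, by Lemma \ref{lem:saturated-subsheaf}, saturatedness of $(\pi^{\ast}\cE_F)^{\vee\vee}$ in $(\pi^{\ast}\cE)^{\vee\vee}$ --- can genuinely fail along such rays, which is precisely the phenomenon illustrated in Example \ref{ex:nonsaturated}.
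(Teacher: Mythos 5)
Your proof is correct and is essentially identical to the paper's: both arguments apply Proposition \ref{prop:pullback-sheaf-surjection} to $\cE_F$ and use the defining property $F^{\rho}(j)=F\cap E^{\rho}(j)$ of $\cE_F$ to conclude in each of the two cases. The additional remark about why the statement is restricted to rays with $\phi_{\R}(\rho')\in\{0\}\cup\Sigma(1)$ is accurate but not needed for the proof.
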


\begin{proof}
If $\phi_{\R}(\rho') = \{0 \}$, we have
$\widetilde{F}^{\rho'}(j) = \left\lbrace
\begin{array}{ll}
\{0\} & \text{if}~ j < 0 \\ F & \text{if}~ j \geq 0
\end{array}
\right.$; so $\widetilde{F}^{\rho'}(j) = F \cap \widetilde{E}^{\rho'}(j)$.
\\
If $\phi_{\R}(\rho')= \rho \in \Sigma(1)$ and $\phi(u_{\rho'}) = b_{\rho} \, u_{\rho} \,$,
we have
$$
\widetilde{F}^{\rho'}(j) =
F^{\rho}\left( \left\lfloor \frac{j}{b_{\rho}} \right\rfloor \right) =
F \cap E^{\rho}\left( \left\lfloor \frac{j}{b_{\rho}} \right\rfloor \right) =
F \cap \widetilde{E}^{\rho'}(j) ~.
$$
\end{proof}

\subsection{Slopes of the pulled back sheaves}
\label{sec:slope-pullback-fibration}
We now assume that $\pi: X' \rightarrow X$ is a toric fibration between two complete and
$\Q$-factorial toric varieties. By \cite[Theorem 3.4.11]{CLS}, the toric morphism $\pi$
is proper. We set $n = \dim X$ and $r= \dim X' - \dim X$. Let $L$ be an ample divisor on
$X$ and $L'$ a $\pi$-ample divisor on $X'$. For $\ep \in \Q_{>0}$ small enough,
$L_{\ep} = \pi^{\ast}L + \ep L' \in \Pic(X') \otimes_{\Z} \Q$ defines an ample
$\Q$-divisor on $X'$.
In this section, we relate the slopes of sheaves on $X$ with respect to $L$ to the slopes
of their pullbacks on $X'$ with respect to $L_\ep$.
All intersection products are made in the Chow rings $A^{\bullet}(X)_\Q$ and
$A^{\bullet}(X')_\Q$ (cf. Proposition \ref{prop:intersection-toric-simplicial}).

\begin{prop}
Let $\cE$ be an equivariant reflexive sheaf on $X$ with family of filtrations given by
$\left(E, \, \{ E^{\rho}(j) \} \right)$ and $\cE' = (\pi^\ast \cE)^{\vee \vee}$.
Then, there is $C > 0$ such that
\begin{multline}\label{eq:slopepullback-fibration}
\mu_{L_{\ep}}(\cE') = C \, \mu_{L}(\cE) \, \ep^r - \dfrac{\ep^{r+1}}{\rk \cE}
\sum_{k=0}^{n-2} \dbinom{n+r-1}{k} \times
\\
\sum_{\rho \in \Sigma(1)} \iota_{\rho}(\cE) \, \ep^{n-k-2}
(\pi^{\ast} D_{\rho}) \cdot (\pi^{\ast} L)^k \cdot (L')^{n+r-k-1}
\end{multline}
and for any vector subspace $F$ of $E$,
\begin{multline}\label{eq:slopepullback-fibration-sat-sheaf}
\mu_{L_\ep}(\cE'_F)-\mu_{L_\ep}((\pi^\ast \cE_F)^{\vee \vee})=
- \dfrac{\ep^{r+1}}{\rk F} \times \\
\sum_{k=0}^{n-2} \binom{n+r-1}{k} \sum_{\rho' \in \Delta}
\left( \iota_{\rho'}(\cE'_{F}) - \iota_{\rho'}( (\pi^* \cE_F)^{\vee \vee} ) \right)
\ep^{n-k-2} D_{\rho'} \cdot (\pi^{\ast} L)^k \cdot (L')^{n+r-k-1}
\end{multline}
where
$\Delta = \{ \rho' \in \Sigma'(1): \phi_{\R}(\rho') \notin(\Sigma(0) \cup \Sigma(1)) \}$
and
$\iota_{\rho}(\cE)$ given in (\ref{eq:c1-sheaf}).
\end{prop}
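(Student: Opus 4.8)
The strategy is to express everything in terms of the Klyachko combinatorial data and then expand the intersection-theoretic slope formula (\ref{eq:slope-sheaf}) in powers of $\ep$. First I would compute $c_1(\cE')$ using (\ref{eq:c1-sheaf}), splitting the sum over $\rho' \in \Sigma'(1)$ according to whether $\phi_{\R}(\rho') = \{0\}$, $\phi_{\R}(\rho') \in \Sigma(1)$, or $\phi_{\R}(\rho') \in \Delta$. By Proposition \ref{prop:pullback-sheaf-surjection}, when $\phi_{\R}(\rho') = \{0\}$ the filtration is trivial (jump at $0$), so $\iota_{\rho'}(\cE') = 0$; when $\phi_{\R}(\rho') = \rho$ with $\phi(u_{\rho'}) = b_\rho u_\rho$, the floor-function reindexing in part (3) gives $\iota_{\rho'}(\cE') = b_\rho \iota_\rho(\cE)$ (each jump of $E^\rho$ at level $j$ becomes a jump of $\widetilde E^{\rho'}$ at level $b_\rho j$, picking up the factor $b_\rho$); and the $\Delta$-rays contribute the remaining, a priori uncontrolled, terms. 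Matching $\pi^* D_\rho$ with $\sum_{\phi_\R(\rho') = \rho} b_\rho D_{\rho'}$ (plus possibly $\Delta$-divisors, which is exactly what sweeps the ambiguity into the $\Delta$-sum), one gets $c_1(\cE') = -\sum_\rho \iota_\rho(\cE)\, \pi^* D_\rho \,+\, (\text{terms on }\Delta)$.

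Next I would expand $\deg_{L_\ep}(\cE') = c_1(\cE') \cdot (\pi^* L + \ep L')^{n+r-1}$ by the binomial theorem. Since $\pi$ is a fibration between complete varieties, $\phi$ is surjective with $r$-dimensional kernel, so $(\pi^* \alpha_1)\cdots(\pi^*\alpha_{n+1}) = 0$ in $A^\bullet(X')_\Q$ for any classes $\alpha_i$ on $X$ (pullback of an $(n+1)$-cycle from an $n$-dimensional variety), whereas $(\pi^* L)^n \cdot (L')^r = L^n \cdot [\text{fiber class}] \neq 0$ gives the leading term: this is where the constant $C$ and the factor $\ep^r$ come from, with the coefficient of $\ep^r$ equal to a positive multiple of $\mu_L(\cE)$ (this uses the projection formula and $\pi_* (L')^r = (\text{positive number})\cdot [X]$). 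The terms with $k \leq n-2$ factors of $\pi^* L$ survive and produce the explicit double sum; the key point is that the exponent of $L'$ must be at least $r+1$ for a term to be nonzero, combined with $\deg \pi^* D_\rho + k\deg \pi^* L = k+1 \leq n-1$ forcing $k \leq n-2$, which is precisely the range in the stated formula.

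For the second formula (\ref{eq:slopepullback-fibration-sat-sheaf}), I would apply the first computation to both $\cE'_F$ and $(\pi^*\cE_F)^{\vee\vee}$ and subtract. By Corollary \ref{cor:pullback-sheaf-surjection}, the families of filtrations of these two sheaves agree on all rays $\rho'$ with $\phi_\R(\rho') \in \{0\} \cup \Sigma(1)$; in particular their ranks are equal (both $= \dim F$) and all the $\ep^r$-leading terms as well as all $\rho \in \Sigma(1)$ contributions cancel, leaving only the difference of the $\Delta$-sums — which is exactly the right-hand side of (\ref{eq:slopepullback-fibration-sat-sheaf}). The main obstacle I anticipate is bookkeeping the $\Delta$-rays carefully: one must check that $\pi^* D_\rho$, expressed via $\ddiv(\chi^{\psi(m)})$-type relations (using equation (\ref{eq:divisor-of-character}) and the pullback of support functions), does not leak extra $\rho \in \Sigma(1)$ contributions, and that the floor-function identity $\iota_{\rho'}(\cE') = b_\rho \iota_\rho(\cE)$ is applied with the correct normalization when $X$ is only $\Q$-factorial (so $\mult$-factors and the self-intersection convention (\ref{eq:sel-intersection}) enter). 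Granting this, the rest is a routine binomial expansion and degree count.
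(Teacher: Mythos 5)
Your overall framework matches the paper's: binomial expansion of $L_\ep^{n+r-1}$, the projection formula $\pi_*\left(\pi^*D\cdot(\pi^*L)^k\cdot(L')^{n+r-k-1}\right)=D\cdot L^k\cdot\pi_*((L')^{n+r-k-1})$, vanishing for $k\geq n$, identification of the $k=n-1$ term with $C\deg_L(D)$ via $\pi_*((L')^r)=C\,[X]$ and relative ampleness of $L'$, and, for the second formula, the use of Corollary \ref{cor:pullback-sheaf-surjection} to localize $c_1(\cE'_F)-c_1((\pi^*\cE_F)^{\vee\vee})$ on the $\Delta$-divisors. That part is correct and is exactly how the paper proceeds.

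The genuine gap is in your derivation of (\ref{eq:slopepullback-fibration}). You compute $c_1(\cE')$ ray by ray from the Klyachko data, but Proposition \ref{prop:pullback-sheaf-surjection} gives \emph{no} description of the filtrations $\widetilde E^{\rho'}$ for $\rho'\in\Delta$, and you explicitly leave those contributions ``a priori uncontrolled''. Since formula (\ref{eq:slopepullback-fibration}) contains no $\Delta$-terms, your argument only yields it up to an unidentified correction $\delta\cdot L_\ep^{n+r-1}$ with $\delta$ supported on the contracted divisors --- a correction of order $\ep^{r+1}$ that is not a priori zero and would falsify the exact formula as stated. What is needed, and what the paper invokes directly without any ray-by-ray computation, is the identity $c_1(\cE')=\pi^*c_1(\cE)$; your computation verifies it only over $\{0\}\cup\Sigma(1)$ (where indeed $\iota_{\rho'}(\cE')=b_{\rho'}\,\iota_\rho(\cE)$), and the $\Delta$-rays are precisely where it remains to be checked. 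Two smaller points: (a) for the second formula you still need $D_{\rho'}\cdot(\pi^*L)^k\cdot(L')^{n+r-k-1}=0$ for $k\geq n-1$ and $\rho'\in\Delta$ to get the stated range $k\leq n-2$ and the overall $\ep^{r+1}$; this does not follow from the projection formula for pulled-back classes, and the paper's argument is that $\dim\pi(D_{\rho'})\leq n-2$ forces $\pi_*(D_{\rho'}\cdot(L')^{n+r-k-1})$ to vanish in degree $k\geq n-1$. (b) Your ``key point'' that the exponent of $L'$ must be at least $r+1$ for a term to be nonzero is misstated: the $k=n-1$ term carries $(L')^{r}$ and is the nonzero leading term; the correct statement is that $D\cdot L^k$ has codimension $k+1>n$ for $k\geq n$, so only $k\leq n-1$ survives, with $k=n-1$ giving the $\ep^r$ term and $k\leq n-2$ the corrections.
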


\begin{rem}
The set $\Delta$ indexes the invariant divisors of $X'$ contracted by $\pi$.
\end{rem}

\begin{proof}
First, we have
$$
L_{\ep}^{n+r-1} = \sum_{k=0}^{n+r-1}\dbinom{n+r-1}{k} (\pi^{\ast}L)^{k} \cdot
(\ep L')^{n+r-k-1} .
$$
Let $D$ be a divisor on $X$. By the Projection formula (see
\cite[Proposition 2.3]{Fulton-intersection}),
for any $k \in \{0, \ldots, n+r-1 \}$,
$$
\pi_\ast \left( (\pi^\ast D) \cdot (\pi^{\ast} L)^k \cdot (L')^{n+r-k-1} \right) =
D \cdot L^k \cdot \pi_{\ast} ((L')^{n+r-k-1}) \in A_{0}(X).
$$
Hence,
$$
\deg \left( (\pi^\ast D) \cdot (\pi^{\ast} L)^k \cdot (L')^{n+r-k-1} \right) =
\deg \left( D \cdot L^k \cdot \pi_{\ast} ((L')^{n+r-k-1}) \right) ~.
$$
If $k \geq n$, then $D \cdot L^k \cdot \pi_{\ast} ((L')^{n+r-k-1}) = 0 \in A_0(X)$. Hence,
$$
(\pi^\ast D) \cdot (\pi^{\ast} L)^k \cdot (L')^{n+r-k-1} = 0.
$$
Since $L'$ is relatively ample, if $V \subseteq X'$ is an irreducible subvariety of
positive dimension that maps to a point in $X$, then $(L')^{\dim V} \cdot V > 0$
(cf. \cite[Corollary 1.7.9]{Laz-positivity1}). So in the case where $k=n-1$, one has
$$
(\pi^\ast D) \cdot (\pi^{\ast} L)^{n-1} \cdot (L')^r > 0 ~.
$$
As $\pi_{\ast} ((L')^r) \in A_{n}(X)$, we deduce that there is a constant $C>0$ such that
$\pi_{\ast} ((L')^r) = C \cdot [X]$. Thus,
$$
(\pi^{\ast}D) \cdot (\pi^{\ast}L)^{n-1} \cdot (L')^r =
C \, (D \cdot L^{n-1}) = C \, \deg_{L}(D) ~.
$$
Therefore, the degree of $\pi^{\ast}D$ with respect to $L_\ep$ is given by
\begin{align*}
\deg_{L_{\ep}}(\pi^\ast D) = & C \ep^{r} \, \deg_L(D) \\
& + \ep^{r+1} \sum_{k=0}^{n-2} \dbinom{n+r-1}{k} \ep^{n-k-2}
(\pi^{\ast} D) \cdot (\pi^{\ast} L)^k \cdot (L')^{n+r-k-1}  ~.
\end{align*}
As $\mu_{L_{\ep}}(\cE') = c_1(\cE') \cdot L_{\ep}^{n+r-1} =
\pi^{\ast}(c_1(\cE)) \cdot L_{\ep}^{n+r-1}$, 
according to Formulas (\ref{eq:c1-sheaf}) and (\ref{eq:slope-sheaf}), we get
Formula (\ref{eq:slopepullback-fibration}).

Let now $F$ be a vector subspace of $E$. We recall that $\cE'_F$ is the saturated
subsheaf of $\cE'$ associated to $F$ (cf. Notation \ref{notation:subsheaf}).
We wish to compare the slopes of $\cE_F'$ and of $(\pi^*\cE_F)^{\vee \vee}$.
We denote by $\left( F, \{ \widetilde{F}^{\rho'}(j) \} \right)$ the family of filtrations
of $(\pi^* \cE_F)^{\vee \vee}$.
By Corollary \ref{cor:pullback-sheaf-surjection}, for any $\rho' \in \Sigma'(1)$ such that
$\phi_{\R}(\rho') \in \{0\} \cup \Sigma(1)$,
$\widetilde{F}^{\rho'}(j) = \widetilde{E}^{\rho'}(j) \cap F$. Therefore, according to
(\ref{eq:c1-sheaf}), one has
\begin{equation}\label{eq:chern-pullback-fibration-sat-sheaf}
c_1(\cE'_{F}) = c_1( (\pi^* \cE_F)^{\vee \vee} ) - \sum_{\rho' \in \Delta}
\left( \iota_{\rho'}(\cE'_{F}) - \iota_{\rho'}( (\pi^* \cE_F)^{\vee \vee} ) \right)
D_{\rho'} \,.
\end{equation}
Let $\rho' \in \Delta$, then $\dim \pi( D_{\rho'}) \leq n-2$.
For $k \in \{0, \ldots, n+r-1\}$ ,
$$D_{\rho'} \cdot (L')^{n+r-k-1} \in A_{k}(|D_{\rho'}| \cap |(L')^{n+r-k-1}|)$$
and
$$
\pi_\ast \left( D_{\rho'} \cdot (L')^{n+r-k-1} \right) \in
A_{k}\left( \pi( |D_{\rho'}| \cap |(L')^{n+r-k-1}|) \right) \,.
$$
As $\dim \pi( |D_{\rho'}| \cap |(L')^{n+r-k-1}|) \leq n-2$, we deduce that
$\pi_\ast \left( D_{\rho'} \cdot (L')^{n+r-k-1} \right) =0$
if $k \geq n-1$. Thus,
$$
(\pi^{\ast}L)^{k} \cdot (\ep L')^{n+r-k-1} \cdot D_{\rho'} = 0
$$
if $k \geq n-1$. Therefore, for any $\rho' \in \Delta$,
$$
\deg_{L_\ep}(D_{\rho'}) = \ep^{r+1} \sum_{k=0}^{n-2} \dbinom{n+r-1}{k} \ep^{n-k-2}
D_{\rho'} \cdot (\pi^{\ast} L)^k \cdot (L')^{n+r-k-1}.
$$
Using (\ref{eq:chern-pullback-fibration-sat-sheaf}) and (\ref{eq:slope-sheaf}),
we get Formula (\ref{eq:slopepullback-fibration-sat-sheaf}).
\end{proof}

\subsection{Stability of the pulled back sheaf along a fibration}
\label{sec:stability-pullback-fibration}
We can now give the proofs of Theorem \ref{theo:stablecaseintro}, Theorem
\ref{theo:semistablecaseintro} and Proposition \ref{prop:unstablecaseintro}. We keep the
notations of the previous section.
Recall that to check slope stability of $\cE'$, by Proposition
\ref{prop:slope-with-saturated-sheaf} and Lemma \ref{lem:saturated-subsheaf}, it is enough
to compare slopes with subsheaves of the form $\cE'_F$. According to Formulas
(\ref{eq:slopepullback-fibration}) and (\ref{eq:slopepullback-fibration-sat-sheaf}), for
any vector subspace $F$ of $E$, we have
\begin{equation}\label{eq:slope-pullback-fib}
\mu_{L_\ep}(\cE') - \mu_{L_\ep}(\cE'_F) =
\left( \mu_{L_\ep}(\cE') - \mu_{L_\ep}( (\pi^* \cE_F)^{\vee \vee}) \right) +
\left( \mu_{L_\ep}((\pi^* \cE_F)^{\vee \vee}) - \mu_{L_\ep}(\cE'_F) \right)
\end{equation}
where
$$
\left\lbrace
\begin{array}{l}
\mu_{L_\ep}((\pi^* \cE_F)^{\vee \vee}) - \mu_{L_\ep}(\cE'_F) = o(\ep^{r})
\\
\mu_{L_\ep}(\cE') - \mu_{L_\ep}((\pi^* \cE_F)^{\vee \vee}) =
C(\mu_L(\cE) - \mu_L(\cE_F)) \ep^r + o(\ep^{r})
\end{array}
\right. ~.
$$
As $\cE'_F$ is the saturation of $(\pi^\ast \cE_F)^{\vee \vee}$, we have
$\mu_{L_\ep}((\pi^\ast \cE_F)^{\vee \vee}) - \mu_{L_\ep}(\cE'_F) \leq 0$.

\begin{proof}[Proof of Theorem \ref{theo:stablecaseintro}]
We assume that $\cE$ is stable with respect to $L$. For any vector subspace $F$ of $E$,
we have $\mu_{L}(\cE) - \mu_{L}(\cE_F)>0$. We set
$$
a_0 = \min \{\mu_{L}(\cE) - \mu_{L}(\cE_F): \{0\} \subsetneq F \subsetneq E \} ~.
$$
By Lemma \ref{lem:slope-number-vector-space}, one has $a_0 >0$. As the set
$\{\mu_{L_\ep}(\cE') - \mu_{L_\ep}(\cE'_F): \{0\} \subsetneq F \subsetneq E \}$ is
finite, we deduce that the number of vector spaces $F$ to consider is finite.
By Equation (\ref{eq:slope-pullback-fib}), we get
$$
\mu_{L_\ep}(\cE') - \mu_{L_\ep}(\cE'_F) \geq C a_0 \, \ep^r + o(\ep^r) ~.
$$
Thus, there is $\ep_0 >0$, such that for any $\ep \in (0 , \ep_0) \cap \Q$,
$\mu_{L_{\ep}}(\cE') - \mu_{L_{\ep}}(\cE'_F) > 0$.
Hence, we deduce that $\cE'$ is stable with respect to $L_{\ep}$.
\end{proof}

\begin{proof}[Proof of Proposition \ref{prop:unstablecaseintro}]
We assume that $\cE$ is unstable with respect to $L$. There is a vector subspace $F$ of $E$
with $0< \dim F < \dim E$ such that $\mu_{L}(\cE) - \mu_{L}(\cE_F)<0$.
By (\ref{eq:slope-pullback-fib}), there is $\ep_0 >0$, such that for any
$\ep \in (0 , \ep_0) \cap \Q$, $\mu_{L_{\ep}}(\cE') - \mu_{L_{\ep}}(\cE'_F) < 0$.
Hence, $\cE'$ is unstable with respect to $L_{\ep}$.
\end{proof}

\begin{rem}
Theorem \ref{theo:stablecaseintro} and Proposition \ref{prop:unstablecaseintro}
also follow from the openness property of stability \cite[Theorem 3.3]{GKPeternell15}.
\end{rem}

We now consider the case where $\cE$ is an equivariant strictly semistable sheaf on $(X,L)$.
There is a Jordan-H\"older filtration
$$
0=\cE_1 \subseteq \cE_2 \subseteq \ldots \subseteq \cE_\ell=\cE
$$
by slope semistable subsheaves with stable quotients of same slope as $\cE$. We denote by
$\Gr(\cE):=\bigoplus_{i=1}^{\ell-1}\cE_{i+1}/\cE_i$ the graded object of $\cE$ and
$\mathfrak{E}$ the set of equivariant and saturated reflexive subsheaves
$\cF \subseteq \cE$ arising in a Jordan-H\"older filtration of $\cE$.
Before proving Theorem \ref{theo:semistablecaseintro}, we need an extra auxiliary lemma.

\begin{lem}\label{lem:pullback-saturated-sheaf}
Let $\pi: X' \rightarrow X$ be a fibration and $\cE$ be a locally free sheaf on $X$.
If $\cF$ is a coherent subsheaf of $\cE$ such that $\cE / \cF$ is locally free, then
$\pi^\ast \cF$ is a saturated subsheaf of $\pi^\ast \cE$.
\end{lem}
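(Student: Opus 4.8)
The plan is to exploit the short exact sequence $0 \to \cF \to \cE \to \cE/\cF \to 0$ together with flatness of $\pi^\ast$ in the locally free setting. Since $\cE/\cF$ is locally free, it is in particular flat over $X$, so applying $\pi^\ast$ keeps the sequence exact: one obtains a short exact sequence $0 \to \pi^\ast \cF \to \pi^\ast \cE \to \pi^\ast(\cE/\cF) \to 0$ on $X'$. (Concretely, $\Tor_1^{\cO_X}(\cE/\cF, \cdot)$ vanishes because $\cE/\cF$ is locally free, so pulling back introduces no torsion in the kernel term; alternatively one argues locally, where $\cE/\cF$ is free and the sequence splits.) The key observation is then that the quotient $\pi^\ast \cE / \pi^\ast \cF \cong \pi^\ast(\cE/\cF)$ is again locally free, hence torsion-free. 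Recall that a subsheaf $\cF_1 \subseteq \cF_2$ is saturated precisely when $\cF_2/\cF_1$ is torsion-free; so this identification immediately yields that $\pi^\ast \cF$ is saturated in $\pi^\ast \cE$.

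First I would fix the local picture: over a suitable affine open $U \subseteq X$ trivialising $\cE$ and $\cE/\cF$, the surjection $\cE \to \cE/\cF$ of free modules splits, so $\cF|_U$ is a direct summand of $\cE|_U$ and in particular $\cF$ is itself locally free. Then I would note that $\pi^\ast$ of a split exact sequence of free modules is again split exact, so on $\pi^{-1}(U)$ the sequence $0 \to \pi^\ast\cF \to \pi^\ast\cE \to \pi^\ast(\cE/\cF) \to 0$ is exact with locally free terms. Gluing over a cover of $X$ gives the global exact sequence, and identifies $\pi^\ast\cE/\pi^\ast\cF$ with the locally free sheaf $\pi^\ast(\cE/\cF)$. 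Finally I would invoke that a locally free sheaf on the normal (indeed any) variety $X'$ is torsion-free, and conclude by the characterisation of saturation that $\pi^\ast\cF$ is saturated in $\pi^\ast\cE$.

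I do not expect a serious obstacle here: the statement is essentially the standard fact that pullback is exact on short exact sequences with locally free quotient, combined with the torsion-free characterisation of saturation. The only mild point to be careful about is that $\pi$ is a general toric fibration rather than a flat morphism in the strong sense — but since we are only pulling back along $\pi^{-1}\cO_X \to \cO_{X'}$ sheaves that are locally free over $X$, flatness of these particular modules is all that is used, and this holds trivially. One should also record that $\cF$ itself is locally free (as a direct summand of $\cE$ locally), which is implicitly what allows $\pi^\ast\cF$ to behave well; this will also be convenient for later applications where $\pi^\ast\cF$ and its reflexive hull are compared.
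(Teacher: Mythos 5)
Your proposal is correct and follows essentially the same route as the paper: pull back the short exact sequence $0\to\cF\to\cE\to\cE/\cF\to 0$, note that the $\Tor_1$ term vanishes because $\cE/\cF$ is locally free, identify $\pi^\ast\cE/\pi^\ast\cF$ with the locally free (hence torsion-free) sheaf $\pi^\ast(\cE/\cF)$, and conclude saturation. The local splitting discussion is a harmless elaboration of the same Tor-vanishing argument.
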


\begin{proof}
Let $\cG$ be the quotient sheaf of $\cE$ by $\cF$.
As we have an exact sequence
$0 \imp \pi^{-1} \cF \imp \pi^{-1} \cE \imp \pi^{-1} \cG \imp 0$ and
$\pi^\ast \cE = \pi^{-1} \cE \otimes_{\pi^{-1} \cO_X} \cO_{X'}$ we get
$$
\Tor_{1}^{\pi^{-1} \cO_{X}}(\pi^{-1} \cG, \, \cO_{X'}) \imp \pi^{\ast} \cF \imp
\pi^\ast \cE \imp \pi^\ast \cG \imp 0 ~.
$$
As $\cG$ is a locally free $\cO_X$-module, we deduce that $\pi^{-1} \cG$ is a locally
free $\pi^{-1} \cO_X$-module, therefore
$\Tor_{1}^{\pi^{-1}\cO_X}(\pi^{-1} \cG, \, \cO_{X'}) = 0$. Hence, we have an exact
sequence
$$
0 \imp \pi^{\ast} \cF \imp \pi^\ast \cE \imp \pi^\ast \cG \imp 0 ~.
$$
As $\pi^\ast \cG \simeq \pi^\ast \cE / \pi^\ast \cF$ and $\pi^\ast \cG$ is locally free,
we deduce that $\pi^\ast \cE / \pi^\ast \cF$ is torsion free. Hence, $\pi^\ast \cF$ is
saturated in $\pi^\ast \cE$.
\end{proof}

\begin{proof}[Proof of Theorem \ref{theo:semistablecaseintro}]
Let $\fF = \{ F \subsetneq E : \mu_{L}(\cE_F) < \mu_{L}(\cE) \}$.
By Equation (\ref{eq:slope-pullback-fib}), for any $F \in \fF$, there is $\ep_F >0$ such
that for any $\ep \in (0, \ep_F) \cap \Q$, $\mu_{L_\ep}(\cE'_F) < \mu_{L_\ep}(\cE')$.
We set
$$
\ep_1 = \min \{\ep_F : F \in \fF \}.
$$
As by Lemma \ref{lem:slope-number-vector-space} it suffices to compare slopes for a finite
set of vector subspaces, we deduce that $\ep_1 >0$. Thus, the subsheaves $\cE'_F$ for
$F \in \fF$ will never destabilize $\cE'$ for $\ep < \ep_1$.

We then consider $F\notin \fF$, that is the case where $\mu_{L}(\cE_F) = \mu_{L}(\cE)$.
We then have by definition $\cE_F \in \fE$. As $\cE$ is locally free and $\Gr(\cE)$ is
sufficiently smooth, by Lemma \ref{lem:pullback-saturated-sheaf}, $\pi^\ast \cE_F$ is
saturated in $\cE'$. Hence, $(\pi^\ast \cE_F)^{\vee \vee}=\cE'_F$ and
\begin{equation}\label{eq:slope-saturated-and-pullback}
\mu_{L_\ep}((\pi^\ast \cE_F)^{\vee \vee}) - \mu_{L_\ep}(\cE'_F) = 0.
\end{equation}
Therefore, for any $F \subseteq E$ such that $\cE_F \in \fE$,
$$
\mu_{L_\ep}(\cE') - \mu_{L_\ep}(\cE'_F) = \mu_{L_\ep}(\cE') -
\mu_{L_\ep}((\pi^\ast \cE_F)^{\vee \vee}).
$$
But then the sign of $\mu_{L_\ep}(\cE') - \mu_{L_\ep}(\cE'_F)$ is given by the sign of
the coefficient $\mu_0(\cE') - \mu_0(\cE'_F)$ of the smallest exponent in the expansion
in $\ep$ of $\mu_{L_\ep}(\cE')-\mu_{L_\ep}(\cE'_F)$. Again, as we only need to test for a
finite number of subspaces $F\subseteq E$, we obtain the result, with $\ep_0\leq \ep_1$.
\end{proof}

\subsection{The case of locally trivial fibrations}
\label{sec:loctrivialfibration}
We assume here that $\pi: X' \to X$ is locally trivial. We use the notations of Section \ref{sec:fibration}.
Let $\cE$ be an equivariant reflexive sheaf on $X$ given by the family of filtrations
$\left( E, \, \{ E^{\rho}(j) \} \right)$.
As for any $\rho' \in \Sigma'(1)$, $\phi_{\R}(\rho') \in \Sigma(0) \cup \Sigma(1)$, by
Corollary \ref{cor:pullback-sheaf-surjection} one has
$(\pi^\ast \cE_F)^{\vee \vee} = \cE'_F$ for any vector subspace $F$ of $E$.
According to (\ref{eq:slope-pullback-fib}), we have
\begin{equation}\label{eq:slople-trivial-fib}
\mu_{L_\ep}(\cE') - \mu_{L_\ep}(\cE'_F) =
\mu_{L_\ep}(\cE') - \mu_{L_\ep}((\pi^\ast \cE_F)^{\vee \vee}) ~.
\end{equation}
Therefore, in the proof of Theorem \ref{theo:semistablecaseintro}, identity
(\ref{eq:slope-saturated-and-pullback}) holds for any vector subspace $F$ of $E$.
Hence, in the case of locally trivial fibration, the assumptions on $\cE$ and $\Gr(\cE)$
to be locally free in Theorem \ref{theo:semistablecaseintro} are not necessary. Let's now consider a simple example to illustrate our results. We will assume that $X'=X$, so that the only perturbation we consider is in the polarisation from $L$ to $L_\ep'$.
\begin{example}
\label{ex:exampleF2}
Let $(e_1, e_2)$ be a basis of $\Z^2$. We set $u_1=e_1$, $u_2=e_2$, $u_3=e_2-2e_1$ and
$u_4= -e_2$. Let $X$ be the singular toric surface associated to the fan
$$
\Sigma = \{0\} \cup \{ \Cone(u_i): 1 \leq i \leq 4 \} \cup
\{ \Cone(u_i, u_{i+1}): 1 \leq i \leq 4 \}.
$$
We denote by $D_i$ the divisor corresponding to the ray $\Cone(u_i)$.
As $\Sigma$ is simplicial, the divisors $D_i$ are $\Q$-Cartier. There are linear
equivalences $D_1 \sim_{\rm lin} 2 D_3$ and $D_2 \sim_{\rm lin} D_4 - D_3$. According to
Lemma \ref{lem:intersection-toric-simplicial}, we have
$$
D_3 \cdot D_4 = \dfrac{1}{2} \qquad D_3 \cdot D_2 = \dfrac{1}{2} \qquad D_3 \cdot D_3 =0
\qquad D_4 \cdot D_1 = 1 \qquad D_4 \cdot D_4 = \dfrac{1}{2} ~.
$$
Hence the divisor $a D_3 + b D_4$ is ample if and only if $a, \, b >0$.
As $-K_X = D_1 + D_2 + D_3 + D_4 \sim_{\rm lin} 2(D_3 + D_4)$, we deduce that $X$ is a Fano
surface. Let $\cE$ be the tangent bundle of $X$ (see Example \ref{examp:filtration-tangent}
for its family of filtrations).
If $L$ is an ample line bundle, to check the stability of $\cE$ with respect to $L$, it
suffices to compare $\mu_{L}(\cE)$ with $\mu_{L}(\cE_F)$ for
$F \in \{F_1, F_2, F_3 \}$ where $F_1 =\Span(u_1)$, $F_2 =\Span(u_2)$ and $F_3 =\Span(u_3)$.
We assume that $L= - K_X$. We have
$$
L \cdot D_1 = 2 \qquad L \cdot D_2 = 1 \qquad L \cdot D_3 = 1 \qquad
L \cdot D_4 = 2
$$
and
$$
\mu_{L}(\cE) = 3 \qquad \mu_{L}(\cE_{F_1})=2 \qquad \mu_{L}(\cE_{F_2})=3 \qquad
\mu_{L}(\cE_{F_3})=1 ~.
$$
Hence $\cE$ is strictly semistable with respect to $-K_X$.

We now consider $L'_\ep = L + \ep(a D_3 + b D_4)$. From our criterion, to check stability
of $\cE$ with respect to $L'_\ep$, it is enough to compare slopes of $\cE$ and $\cE_{F_2}$.
We have
$$
L'_\ep \cdot D_1 = 2 + b \ep, \qquad L'_\ep \cdot D_2 = 1 + \frac{a \ep}{2}, \qquad
L'_\ep \cdot D_3 = 1 + \frac{b \ep}{2}, \qquad L'_\ep \cdot D_4 = 2 + \frac{(a+b)\ep}{2}.
$$
Thus,
$\mu_{L'_\ep}(\cE) = 3 + \left(b + \frac{a}{2} \right) \ep$, and
$\mu_{L'_\ep}(\cE_{F_2})=3 + \left(a + \frac{b}{2} \right) \ep$.
We deduce that $\cE$ is stable (resp. strictly semistable) with respect to $L'_\ep$ if and
only if $b-a >0$ (resp. $b-a =0$).
\end{example}

\subsection{Stability of sheaves in family}
\label{sec:families}
Let $S$ be a scheme of finite type over $\C$, and let $T_S$ be the relative torus of
$X \times S \rightarrow S$. An $S$-family of equivariant reflexive sheaves on $X$ is a
reflexive sheaf $\cE$ on $X \times S$ with an action of the relative torus $T_S$
compatible with the action on $X \times S$.
For $\rho \in \Sigma(1)$, we define an order relation $\preceq_{\rho}$ on $M$ by setting
$m \preceq_{\rho} m'$ if and only if $m' - m \in S_{\rho}:= \rho^{\vee} \cap M$. We write
$m \prec_{\rho} m'$ if we have $m \preceq_{\rho} m'$ but not $m' \preceq_{\rho} m$.
Following \cite[Proposition 3.13]{payne2008} and \cite[Proposition 3.4]{Koo11}, we have:

\begin{prop}
The category of $S$-families of equivariant reflexive sheaves on $X$ is equivalent to
the category of reflexive sheaves $\cF$ on $S$ with collections of increasing filtrations
$$
\{ \cF_{m}^{\rho}: m \in M \}_{\rho \in \Sigma(1)}
$$
indexed by the rays of $\Sigma$ having the following properties:
\begin{enumerate}
\item
for all $m, m' \in M$ with $m \preceq_\rho m'$, there are injections
$\cF_{m}^\rho \hookrightarrow \cF_{m'}^\rho$ and $\cF_{m}^\rho \hookrightarrow \cF$;
\item
for each chain $\cdots \prec_\rho m_{i-1} \prec_\rho m_i \prec_\rho \cdots$ of
elements of $M$, there exists $i_0 \in \Z$ such that $\cF_{m_i}^\rho = 0$ for all
$i \leq i_0$;
\item
and, there are only finetely many $m \in M$ such that the morphism
$$
\bigoplus_{m' \prec_\rho m} \cF_{m'}^\rho \longrightarrow \cF_{m}^\rho
$$
is not surjective.
\end{enumerate}
\end{prop}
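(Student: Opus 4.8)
The plan is to relativise Klyachko's equivalence $\fK$ over the base $S$, adapting the arguments of \cite{payne2008,Koo11}. First I would build the functor $\Phi$ from $S$-families to the filtered data. Given an $S$-family $\cE$ on $X\times S$, restrict it to the open orbit $T\times S$; since the relative torus acts and $\{1_T\}\times S\hookrightarrow T\times S$ is a section, equivariance forces $\cE|_{T\times S}\cong p_S^\ast\cF$ with $\cF:=\cE|_{\{1_T\}\times S}$, and $\cF$ is reflexive on $S$ because $p_S$ is smooth. For each ray $\rho\in\Sigma(1)$, restrict $\cE$ to the invariant affine open $U_\rho\times S$ and decompose the sections into their $M$-weight pieces over $\cO_S$; the weight-$m$ part gives a subsheaf $\cF_m^\rho\subseteq\cF$. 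The injections in (1) would then come from multiplication by $\chi^{m'-m}$ in the $\C[S_\rho]\otimes\cO_S$-module structure (and from restriction to the section for $\cF_m^\rho\hookrightarrow\cF$), while (2) and (3) are precisely the assertions that, locally on $S$, $\Gamma(U_\rho\times V,\cE)$ is a finitely generated $\C[S_\rho]\otimes\cO_S(V)$-module: the occurring weights are bounded below in the $\rho$-direction, and the module is generated in weights $\preceq_\rho m$ for all but finitely many $m$.

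In the other direction, given $(\cF,\{\cF_m^\rho\})$ satisfying (1)--(3), I would define the inverse functor $\Psi$ by the relative analogue of \eqref{eq:sheaf from family of filtrations}, setting
\[
\Gamma(U_\sigma\times V,\Psi):=\bigoplus_{m\in M}\Bigl(\bigcap_{\rho\in\sigma(1)}\cF_m^\rho(V)\Bigr)\otimes\chi^m\ \subseteq\ \cF(V)\otimes_{\cO_S(V)}\C[M]
\]
for every cone $\sigma\in\Sigma$ and every open $V\subseteq S$. One checks, exactly as in the absolute case and using the Orbit--Cone Correspondence (Theorem~\ref{theo:orbit-cone}), that these patch along $U_\tau\subseteq U_\sigma$ into a $T_S$-equivariant quasi-coherent sheaf; that (2)--(3) force it to be coherent on each chart $U_\sigma\times S$; and that it is reflexive, since $\bigcup_{\rho}(U_\rho\times S)$ is the complement of a codimension $\geq 2$ locus in $X\times S$, on $U_\rho\times S$ the restriction of $\Psi$ is $\bigoplus_m\cF_m^\rho\otimes\chi^m$ which is reflexive because $\cF$ is, and taking the pushforward from this large open locus recovers the double dual. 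Morphisms on both sides being (relative) weight-preserving maps, functoriality is then automatic.

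Finally I would check that $\Psi\circ\Phi$ and $\Phi\circ\Psi$ are naturally isomorphic to the identity. Both claims are local on $X$ and on $S$, so one may take $S=\Spec A$ affine and work on a single chart $U_\sigma$, where the statement reduces to the module-theoretic identity (the relative version of the computation behind $\fK$, and of Lemma~\ref{lem:saturated-subsheaf}) that $\bigcap_{\rho\in\sigma(1)}\cF_m^\rho$ is the weight-$m$ part of the sections of $\Psi$ over $U_\sigma$, the intersections reconstructing the higher-codimension data because a reflexive sheaf is determined by its restriction to the union of the codimension $\leq 1$ orbits. I expect the main obstacle to be exactly the point where one departs from \cite{Kly90}: ``finite-dimensional vector space'' is replaced by ``coherent reflexive $\cO_S$-module'', so the inclusions in (1) need not split and dimension counts are unavailable; one must instead argue directly with the $\cO_S$-module structure, deducing coherence and reflexivity of $\Psi(\cF,\{\cF_m^\rho\})$ from the absolute case by base change to the points of $S$ together with the finiteness built into conditions (2)--(3).
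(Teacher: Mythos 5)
Your construction is essentially the argument the authors intend: the paper states this proposition as ``following'' Payne and Kool and prints no proof, but its own (suppressed) sketch proceeds exactly as your forward functor does --- restrict to the identity section of $T\times S$ to obtain the reflexive sheaf $\cF$, weight-decompose $\Gamma(U_\rho\times S,\cE)$ into coherent $\cO_S$-modules $\cF_m^\rho$, and use multiplication by characters together with restriction to the torus for the injections in (1), with (2)--(3) encoding finite generation. Your added detail on the inverse functor, coherence and reflexivity is precisely the content the paper delegates to the cited references, and it is sound (the one point to make explicit is that reflexivity of $\bigoplus_m\cF_m^\rho\otimes\chi^m$ on $U_\rho\times S$ uses that the $\cF_m^\rho$ are reflexive subsheaves of $\cF$, which holds for the data arising from reflexive families).
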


Let $\cE = (\cE_s)_{s \in S}$ be an $S$-family of equivariant reflexive sheaves on $X$
given by the collection of increasing filtrations $(\cF, \{ \cF_{m}^{\rho}: m \in M \} )$.
Then, for any $s \in S$, the family of filtrations $(E_s, \{ E_{s}^{\rho}(j) \})$ of the
reflexive sheaf $\cE_s$ is given by
$$
E_s = \cF(s) \quad \text{and} \quad E_{s}^{\rho}(j) = \cF_{m}^{\rho}(s)
$$
with $m \in M$ such that $\<m, u_\rho \> = j$ where $\cF(s)$ and $\cF_{m}^{\rho}(s)$ are
respectively the fiber of $\cF$ and $\cF_{m}^{\rho}$ at $s$.
We first observe that:

\begin{lem}\label{lem:slope-sheaf-family}
Fix an ample divisor $L$ on $X$. If for all $\rho \in \Sigma(1)$ and $j \in \Z$, the map
$s \mapsto \dim(E_{s}^\rho(j))$ is constant, then the set
$$
\{ \mu_{L}( (\cE_s)_F ) : s \in S, \; 0 \subsetneq F \subsetneq E_s \}
$$
is finite.
\end{lem}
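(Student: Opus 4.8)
The plan is to reduce the statement to finiteness of the slopes $\mu_L((\cE_s)_F)$ via Formula (\ref{eq:slope-sheaf}), exactly as in the proof of Lemma \ref{lem:slope-number-vector-space}, but now keeping track of the dependence on $s\in S$. Recall from (\ref{eq:slope-sheaf}) that
\[
\mu_L((\cE_s)_F) = -\frac{1}{\dim F}\sum_{\rho\in\Sigma(1)} \iota_\rho((\cE_s)_F)\,\deg_L(D_\rho),
\]
so since $\deg_L(D_\rho)$ does not depend on $s$ or $F$, it suffices to show that the set of integers $\{\iota_\rho((\cE_s)_F) : s\in S,\ 0\subsetneq F\subsetneq E_s\}$ is finite for each fixed $\rho\in\Sigma(1)$, together with the observation that $\dim F$ takes only finitely many values (the rank of $\cF$, hence of each $E_s$, being bounded — or more simply, $\dim F$ is at most $\rk\cE$ and at least $1$).

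The key step is to bound the number of jumps and their locations in the filtrations, uniformly in $s$. For each $\rho\in\Sigma(1)$, the family $\{\cF_m^\rho : m\in M\}$ satisfies conditions (1)--(3) of the Proposition on $S$-families: by (2) there is $i_0$ with $E_s^\rho(j)=0$ for $j\le i_0$, and by (3) there are only finitely many $m$ (hence only finitely many $j=\langle m,u_\rho\rangle$) for which $\bigoplus_{m'\prec_\rho m}\cF_{m'}^\rho\to\cF_m^\rho$ fails to be surjective; outside this finite set $J_\rho\subseteq\Z$ of ``critical'' values, the filtration of $E_s$ is stationary, i.e. $E_s^\rho(j)=E_s^\rho(j-1)$ for all $s$. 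Here is where the constancy hypothesis enters: since $s\mapsto\dim E_s^\rho(j)$ is constant and $E_s^\rho(j)=E_s$ for $j$ large, the stabilization value $J_\rho$ (smallest $j$ with $E_s^\rho(j)=E_s$) and the vanishing value $j_\rho$ can be chosen independently of $s$, and the same finite range $[j_\rho,J_\rho]\cap\Z$ contains all jumps for every $s$. (Strictly, $J_\rho$ is forced to be independent of $s$ because $\dim E_s^\rho(J_\rho)=\dim E_s^\rho(J_\rho-1)+\text{const}$.) Therefore
\[
\iota_\rho((\cE_s)_F)=\sum_{j=j_\rho}^{J_\rho} j\bigl(\dim(E_s^\rho(j)\cap F)-\dim(E_s^\rho(j-1)\cap F)\bigr),
\]
a sum over a fixed finite index set, with each term an integer in absolute value at most $|j|\cdot\rk\cE$. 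This yields a uniform bound on $|\iota_\rho((\cE_s)_F)|$, hence finiteness of the set of values, and one concludes via (\ref{eq:slope-sheaf}).

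The main obstacle — and the only genuinely new point beyond Lemma \ref{lem:slope-number-vector-space} — is making the range of jumps $s$-independent: a priori, as $s$ varies, the filtration of $E_s$ could have jumps wandering off to $-\infty$ or the stabilization point $J_\rho$ could grow unboundedly, and then $\iota_\rho$ would not be uniformly bounded even though each individual $\iota_\rho((\cE_s)_F)$ is finite. The hypothesis that $s\mapsto\dim E_s^\rho(j)$ is constant is precisely what rules this out: it pins down $j_\rho$ and $J_\rho$, forcing all the combinatorial data of the filtrations into a bounded window uniformly over $S$. Once that is established, the finiteness of $\{\dim(E_s^\rho(j)\cap F)-\dim(E_s^\rho(j-1)\cap F)\}$ (these are integers in $[0,\rk\cE]$) and the fixed finite summation range give the result immediately.
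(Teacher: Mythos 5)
Your proof is correct and follows essentially the same route as the paper's: establish a uniform window $[j_\rho,J_\rho]$ for the jumps of the filtrations independent of $s$ (which, as you note, follows directly from the constancy of $s\mapsto\dim E_s^\rho(j)$ together with the eventual vanishing/stabilization for each fixed $s$), observe that the increments $\dim(E_s^\rho(j)\cap F)-\dim(E_s^\rho(j-1)\cap F)$ lie in $\{0,\ldots,\rk\cE\}$, and conclude via Formula (\ref{eq:slope-sheaf}). The only difference is that you route part of the argument through conditions (2)--(3) of the $S$-family proposition, which is harmless but not needed.
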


\begin{proof}
The proof is similar to the proof of Lemma \ref{lem:slope-number-vector-space}. For any
$\rho \in \Sigma(1)$, there is $(j_{\rho}, J_\rho) \in \Z^2$ such that for any $s \in S$,
$E_{s}^{\rho}(j) = \{0\}$ if $j < j_\rho$ and $E_{s}^{\rho}(j)= E_s$ if $j \geq J_{\rho}$.
As
\begin{multline*}
\{\dim(E_{s}^{\rho}(j) \cap F) - \dim(E_{s}^{\rho}(j-1) \cap F): j \in \Z, s \in S, \;
\text{and} \; 0 \subsetneq F \subsetneq E_s \} \\
\subseteq \{0, 1, \ldots, \rk(\cE) \} \, ,
\end{multline*}
we deduce that $\{ \iota_{\rho}( (\cE_s)_F) : s \in S, 0 \subsetneq F \subseteq E_s \}$ is
finite. Hence, the lemma follows from Formula (\ref{eq:slope-sheaf}).
\end{proof}

The previous lemma is the key to obtain a family version of Theorems
\ref{theo:stablecaseintro} and \ref{theo:semistablecaseintro}.
The {\it characteristic function} $\chi$ of an equivariant reflexive sheaf $\cG$ with
family of filtrations $(F, \{F^{\rho}(j) \})$ is the function
$$
\begin{array}{cccc}
\chi(\cG) : & M & \imp & \Z^{\sharp \Sigma(n)} \\
& m & \longmapsto & (\dim(F_{m}^{\sigma}))_{\sigma \in \Sigma(n)}
\end{array}
$$
where $F_{m}^\sigma = \bigcap_{\rho \in \sigma(1)} F^{\rho}(\<m, u_\rho \>)$.
The families that we will consider will satisfy one of the following:
\begin{enumerate}[label=(\Roman*)]
\item
$\cE$ is locally free on $X \times S$, or
\label{item:pullback-family1}
\item
the characteristic function $(\chi(\cE_s))_{s \in S}$ is constant.
\label{item:pullback-family2}
\end{enumerate}

\begin{lem}\label{lem:dimconstant}
Let $X$ be a smooth toric variety. Assume that $(\cE_s)_{s \in S}$ satisfies
\ref{item:pullback-family1} or \ref{item:pullback-family2}. Then for all
$\rho \in \Sigma(1)$ and $j \in \Z$, $s \mapsto \dim(E_{s}^\rho(j))$ is constant.
\end{lem}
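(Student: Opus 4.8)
The plan is to establish the two cases of the hypothesis separately, proving \ref{item:pullback-family2} directly and deducing \ref{item:pullback-family1} from it. For \ref{item:pullback-family2}, I would fix a ray $\rho\in\Sigma(1)$ and an integer $j$, and realise $\dim(E_s^\rho(j))$ as an entry of the characteristic function. Since $X$ is smooth and $\Sigma$ is complete, there is a maximal cone $\sigma\in\Sigma(n)$ with $\rho\in\sigma(1)$, and then $\{u_{\rho'}:\rho'\in\sigma(1)\}$ is a $\Z$-basis of $N$; let $\{m_{\rho'}:\rho'\in\sigma(1)\}\subseteq M$ be the dual basis. Exactly as recalled in the proof of Lemma~\ref{lem:slope-sheaf-family} (this uses only that $(\cE_s)_{s\in S}$ is an $S$-family, through property~(3)), for each ray there is an integer $J_{\rho'}$, independent of $s$, with $E_s^{\rho'}(i)=E_s$ for all $i\ge J_{\rho'}$ and all $s\in S$. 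Putting
$$
m=j\,m_\rho+\sum_{\rho'\in\sigma(1)\setminus\{\rho\}}J_{\rho'}\,m_{\rho'}\in M,
$$
one gets $\langle m,u_\rho\rangle=j$ and $\langle m,u_{\rho'}\rangle=J_{\rho'}$ for the remaining rays of $\sigma$, hence
$$
(E_s)_m^\sigma=\bigcap_{\rho'\in\sigma(1)}E_s^{\rho'}\big(\langle m,u_{\rho'}\rangle\big)=E_s^\rho(j).
$$
Therefore $\dim(E_s^\rho(j))$ is the $\sigma$-coordinate of $\chi(\cE_s)(m)$, which does not depend on $s$ by \ref{item:pullback-family2}.

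For \ref{item:pullback-family1}, the plan is to show that $(\chi(\cE_s))_{s\in S}$ is locally constant, thereby reducing this case to the previous one. Fix $\sigma\in\Sigma(n)$; since $X$ is smooth, $U_\sigma\cong\Spec\C[x_1,\dots,x_n]$, so over any affine open $\Spec A\subseteq S$ the module $\Gamma(U_\sigma\times\Spec A,\cE)$ is a finitely generated projective $A[x_1,\dots,x_n]$-module carrying an $M$-grading induced by the $T_S$-action; write $\cG_m^\sigma$ for the corresponding weight-$m$ subsheaf of $\cO_S$-modules. Passing to the stalk at a point $s$ and grading $\cO_{S,s}[x_1,\dots,x_n]$ by total degree, whose degree-zero part $\cO_{S,s}$ is local, a finitely generated graded projective module is graded free, so after refining to the $M$-grading each $\cG_m^\sigma$ is locally free over $\cO_S$. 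As $U_\sigma$ is affine, base change along $\{s\}\hookrightarrow S$ then identifies $(E_s)_m^\sigma=\Gamma(U_\sigma,\cE_s)_m$ with the fibre $\cG_m^\sigma\otimes_{\cO_S}\kappa(s)$, so $s\mapsto\dim(E_s)_m^\sigma$ is locally constant. Hence $\chi(\cE_s)$ is constant on the connected components of $S$, and the first part of the proof applied componentwise gives the claim.

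I expect the main obstacle to be the structural input used in case \ref{item:pullback-family1}, namely that the $T_S$-isotypic pieces of a $T_S$-equivariant locally free sheaf on the affine chart $U_\sigma\times S$ are locally free over $\cO_S$. This is a relative form of Klyachko's description of toric vector bundles; I would either carry out the short graded-Nakayama argument sketched above or simply invoke \cite{payne2008,Koo11}. The remaining ingredients, namely the choice of the weight $m$ in case \ref{item:pullback-family2}, the uniform stabilisation of the filtrations, and base change over an affine chart, are all routine.
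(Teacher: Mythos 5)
Your proof is correct. Case \ref{item:pullback-family2} is essentially the paper's own argument: both choose a maximal cone $\sigma$ with $\rho\in\sigma(1)$, pick $m\in M$ pairing to $j$ against $u_\rho$ and landing in the stable range of the filtrations on the other rays of $\sigma$ (uniformly in $s$, which the $S$-family axioms justify), and identify $\dim E_s^{\rho}(j)$ with the $\sigma$-entry of $\chi(\cE_s)(m)$. Where you genuinely diverge is case \ref{item:pullback-family1}. The paper invokes \cite[Proposition 3.13]{payne2008} directly: for a locally free family there is a single multiset $A_\sigma\subseteq M$ of size $\rk(\cE)$ with $\cF_m^{\rho}$ locally free of rank $\left|\{\alpha\in A_\sigma:\langle\alpha,u_\rho\rangle\le\langle m,u_\rho\rangle\}\right|$, so the fibre dimensions are constant at once. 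You instead reprove the relevant part of that statement (local freeness over $\cO_S$ of the $M$-isotypical pieces on an affine chart, via graded Nakayama) and then reduce \ref{item:pullback-family1} to \ref{item:pullback-family2} by showing the characteristic function is locally constant. This is more self-contained and makes the logical dependence ``(I) $\Rightarrow$ (II) locally'' explicit, but it essentially re-derives the cited compatibility condition; if you write it out, it is cleaner to run graded Nakayama directly for the $M$-grading (or note that each $M$-weight space is a finitely generated direct summand of a graded piece of a graded-free module, hence free over the local ring) rather than passing through the total-degree coarsening. One small caveat: your reduction only yields that $s\mapsto\dim E_s^{\rho}(j)$ is \emph{locally} constant, i.e.\ constant on connected components of $S$; the lemma asserts constancy, which for disconnected $S$ requires the single multiset $A_\sigma$ of Payne's proposition (or a connectedness hypothesis). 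This is immaterial for the moduli-theoretic applications but worth a sentence.
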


\begin{proof}
In the case that the family satisfies \ref{item:pullback-family1}, by
\cite[Proposition 3.13]{payne2008} (Klyachko's compatibility condition for
$S$-families of locally free sheaves), for any $\sigma \in \Sigma(n)$, there is a multiset
$A_\sigma \subseteq M$ of size $\rk(\cE)$ such that for any $m \in M$, $\cF_{m}^{\rho}$
is a locally free sheaf of rank
$$
\left| \{ \alpha \in A_\sigma : \< \alpha, u_\rho \> \leq \<m, u_\rho \> \} \right| .
$$
As for any $s \in S$ and $m \in M$, $\dim( \cF_{m}^{\rho}(s) ) = \rk( \cF_{m}^{\rho} )$,
we deduce that the map
$$
s \longmapsto \dim(E_{s}^{\rho}(\< m, u_\rho \>))
$$
is constant.

We now assume \ref{item:pullback-family2}. For any $\sigma \in \Sigma(n)$, the set
$\lbrace u_\rho : \rho \in \sigma(1) \rbrace$ is a basis of $N$. Then, for any
$\rho \in \sigma(1)$ and any $j \in \Z$, we can find an element $m \in M$ such that for
all $s \in S$,
$$
\langle m , u_{\rho} \rangle = j
$$
and for $\rho' \in \sigma(1) \setminus \lbrace \rho \rbrace$,
$$
E_{s}^{\rho'}(\langle m, u_{\rho'} \rangle ) = E_s.
$$
This can be made uniformly in $s$ as follows: we can fix
$m' \in M$ such that $E_{s,m'}^\sigma = E_s$ for all $s \in S$. This implies that for
$\rho' \in \sigma(1)$, $E_{s}^{\rho'}(\langle m', u_{\rho'} \rangle)= E_s$. Then, define
$$
m= j u_{\rho}^* + \sum_{\rho' \neq \rho} \langle m', u_{\rho'} \rangle u_{\rho'}^*
$$
where $\lbrace u_{\rho'}^* : \rho' \in \sigma(1) \rbrace$ is the dual basis of
$\lbrace  u_{\rho'} : \rho' \in \sigma(1) \rbrace$. But then
$$
E_{s,m}^\sigma= \bigcap_{\rho' \in \sigma(1)} E_{s}^{\rho'}(\langle m, u_{\rho'} \rangle)=
E_{s}^{\rho}(j)
$$
and \ref{item:pullback-family2} implies the result.
\end{proof}

Let $\pi : X' \rightarrow X$ be a toric fibration between two smooth toric varieties and
$(\cE_s)_{s \in S}$ be a family of reflexive sheaves on $X$ satisfying
\ref{item:pullback-family1} or \ref{item:pullback-family2}. Assume that for all $s \in S$,
$\cE_s$ is stable on $(X,L)$. From Lemma \ref{lem:dimconstant} and Lemma
\ref{lem:slope-sheaf-family}, we deduce that the $\ep_0$ in the proof of Theorem
\ref{theo:stablecaseintro} can be taken uniformly in $s \in S$.
Note for this that in the expansions in $\ep$ of formula (\ref{eq:slope-sheaf}) for the
slopes $\mu_{L_\ep}(\cE')$, the terms $\iota_\rho(\cE)$ do not vary with $\ep$, only the
terms $\deg_{L_\ep}(D_\rho)$ do. Similarly, we can take $\ep_0$ uniformly in Theorem
\ref{theo:semistablecaseintro} if all $\cE_s$ are assumed to be sufficiently smooth
on $(X,L)$.

We deduce from this the existence of injective maps between components of the moduli
spaces of stable equivariant reflexive sheaves on $(X,L)$ and on $(X', L_\ep)$, for $\ep$
small enough.
One can consider the moduli space of equivariant stable reflexive sheaves on $(X,L)$ with
fixed characteristic function  $\chi$ introduced in \cite{Koo11}, denoted
$\cN^{\mu s}_{\chi}(X,L)$. As $\chi$ determines the Chern character (\cite{Kly90} and
\cite[Section 3.4]{Koo11}), and thus the Hilbert polynomial by Hirzebruch-Riemann-Roch,
we deduce that the reflexive pullback induces an injective map for $0< \ep \ll 1$:
$$
\pi^* : \cN^{\mu s}_{\chi}(X,L) \longrightarrow \cN^{\mu s}_{P'}(X',L_\ep)
$$
where $P'$ denotes the Hilbert polynomial with respect to $L_\ep$ of any element
$(\pi^*\cE)^{\vee\vee}$ with characteristic function $\chi$. In fact, if we denote
$P_\chi$ the Hilbert polynomial induced by $\chi$, we expect that this map is actually
defined on 
$$
\cN^{\mu s}_{P}(X,L)= \bigcup_{P_\chi=P} \cN^{\mu s}_{\chi}(X,L)
$$
the moduli of stable equivariant reflexive sheaves with Hilbert polynomial $P$. In the
same way, fixing the total Chern class, one should obtain maps between the moduli spaces
of equivariant and stable locally free sheaves. Those spaces should be obtained as open
sub-schemes of the moduli spaces constructed in \cite{payne2008}. We believe that those
maps deserve further study and will come back to them in future research.

\section{Blow-ups}\label{sec:blowups}
In this section we specialize to equivariant blow-ups along smooth centers.
Let $X$ be a smooth toric variety of dimension $n$ associated to a smooth fan $\Sigma$. We denote
by $\pi: X' \rightarrow X$ the blowup of $X$ along $Z=V(\tau)$ with $\tau \in \Sigma$ such that
$\dim \tau \geq 2$ and we set $\Sigma' = \Sigma^\ast(\tau)$. As before, $\cE$ stands for an equivariant reflexive sheaf on $X$ and $\cE'$ denotes its reflexive pullback along $\pi$.

\subsection{Slope of the reflexive pullback along a blowup}
\label{sec:slope-pullback-blowup}
In this section, we derive formula (\ref{eq:formulaintro}), which is the key to the results
stated in the introduction. Note that the proof doesn't require any equivariant assumption.

\begin{prop}\label{prop:slope-pullback-blowup}
Let $X$ be a smooth projective variety and $Z \subseteq X$ a smooth irreducible
subvariety of dimension $\l$ with $1 \leq \l \leq \dim(X)-2$. We denote by
$\pi: X' \rightarrow X$ the blowup of $X$ along $Z$ and $D_0$ the exceptional divisor of
$\pi$.
Let $L$ be an ample divisor of $X$ and let $L_\ep = \pi^\ast L - \ep D_0$ be an ample
$\Q$-divisor of $X'$ for $\ep \in \Q_{>0}$ small. Then for any divisor $D$ of $X$,
\begin{equation}\label{eq:slope-pullback-blowup}
\pi^\ast D \cdot L_{\ep}^{n-1} = D \cdot L^{n-1} -
\dbinom{n-1}{\l-1} \ep^{n-\l} D \cdot L^{\l-1} \cdot Z + o(\ep^{n-\l})
\end{equation}
and
\begin{equation}\label{eq:degree-exceptional-div}
D_0 \cdot L_{\ep}^{n-1} = \dbinom{n-1}{\l} \ep^{n-\l -1} Z \cdot L^{\l} +
o(\ep^{n-\l-1}).
\end{equation}
\end{prop}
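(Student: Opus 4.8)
The plan is to expand $L_\ep^{n-1}=(\pi^\ast L-\ep D_0)^{n-1}$ by the binomial theorem and to evaluate each term $\pi^\ast D\cdot(\pi^\ast L)^{n-1-k}\cdot D_0^{\,k}$ separately, using the geometry of the exceptional divisor. Write $c=n-\l$ for the codimension of $Z$; by hypothesis $c\geq 2$. Let $j:D_0\hookrightarrow X'$ be the inclusion and $p:D_0=\P(N_{Z/X})\to Z$ the projective bundle projection, so that $\pi\circ j=\iota_Z\circ p$ with $\iota_Z:Z\hookrightarrow X$ the inclusion. I would use two standard facts from intersection theory (see \cite[Chapter 3]{Ful93}): first, $\cO_{X'}(D_0)|_{D_0}$ is the relative $\cO(-1)$ of $p$, that is $j^\ast[D_0]=-\xi$ with $\xi=c_1(\cO_{\P(N_{Z/X})}(1))$; second, for the $\P^{c-1}$-bundle $p$ the pushforward $p_\ast(\xi^{i})$ vanishes for $i<c-1$, equals $[Z]$ for $i=c-1$, and is a well-defined class (a Segre class of $N_{Z/X}$) for $i\geq c-1$.

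For $\pi^\ast D\cdot L_\ep^{n-1}$ I would argue as follows. The $k=0$ term equals $D\cdot L^{n-1}$ because $\pi$ is birational, so $\pi_\ast[X']=[X]$. For $k\geq 1$ one has $D_0^{\,k}=j_\ast\big((-\xi)^{k-1}\big)$, and then the projection formula together with $j^\ast\pi^\ast=p^\ast\iota_Z^\ast$ reduces $\pi^\ast D\cdot(\pi^\ast L)^{n-1-k}\cdot D_0^{\,k}$ to $(-1)^{k-1}$ times the degree on $Z$ of $(D\cdot L^{n-1-k})|_Z\cdot p_\ast(\xi^{k-1})$. Hence all terms with $1\leq k\leq c-1$ vanish; the term $k=c$ equals $-\binom{n-1}{c}\ep^{c}\,(D\cdot L^{\l-1}\cdot Z)$, using $p_\ast(\xi^{c-1})=[Z]$ and $n-1-c=\l-1$; and the terms with $k>c$ are $O(\ep^{k})=o(\ep^{n-\l})$. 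As $\binom{n-1}{c}=\binom{n-1}{\l-1}$, this gives \eqref{eq:slope-pullback-blowup}.

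The computation of $D_0\cdot L_\ep^{n-1}$ is entirely parallel: since $D_0\cdot(\pi^\ast L)^{n-1-k}\cdot D_0^{\,k}=D_0^{\,k+1}\cdot(\pi^\ast L)^{n-1-k}$, the same reduction identifies it with $(-1)^{k}$ times the degree on $Z$ of $(L^{n-1-k})|_Z\cdot p_\ast(\xi^{k})$. Thus the terms with $0\leq k\leq c-2$ vanish, the leading term $k=c-1$ equals $\binom{n-1}{c-1}\ep^{c-1}\,(Z\cdot L^{\l})$, using $n-1-(c-1)=\l$, and the terms with $k\geq c$ are $o(\ep^{n-\l-1})$. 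Since $\binom{n-1}{c-1}=\binom{n-1}{\l}$, this is \eqref{eq:degree-exceptional-div}.

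I expect the main difficulty to be purely a matter of careful bookkeeping: keeping the signs straight (the $(-1)^{k}$ coming from $j^\ast[D_0]=-\xi$, and the normalisation conventions for $\P(N_{Z/X})$ and for Segre classes), running the dimension count that forces $p_\ast(\xi^i)$ to vanish below the middle degree, and verifying that the higher-order contributions are genuine classes, so that the remainder terms really have the claimed order in $\ep$. The hypothesis $c\geq 2$ is what guarantees that the exponents $n-\l$ and $n-\l-1$ are positive and that the corresponding terms actually occur in the expansion of $L_\ep^{n-1}$, while $\l\geq 1$ is needed for the expression $D\cdot L^{\l-1}\cdot Z$ to make sense.
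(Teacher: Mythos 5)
Your proof is correct and follows essentially the same route as the paper: binomial expansion of $L_\ep^{n-1}$, the projection formula, and the identification of $\pi_\ast(D_0^{k})$ with Segre classes of the normal bundle (the paper cites the total Segre class identity $\sum_{k\geq 1}(-1)^{k-1}\eta_\ast(D_0^k)=s(\cN)\cap[Z]$ and reads off the graded pieces by dimension, which is exactly your $j^\ast D_0=-\xi$ plus the projective-bundle pushforward formula in different packaging). The sign and index bookkeeping in your leading terms checks out.
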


\begin{proof}
We denote by $\cN$ the normal bundle of $Z$. We have $D_0 = \P(\cN)$.
For a divisor $D$ of $X$, one has
\begin{align*}
\pi^\ast D \cdot L_{\ep}^{n-1} &=
\sum_{k=0}^{n-1} \dbinom{n-1}{k} \pi^\ast D \cdot (\pi^\ast L)^k \cdot (- \ep D_0)^{n-1-k}
\\ &=
\pi^\ast D \cdot (\pi^\ast L)^{n-1} + \sum_{k=0}^{n-2} \dbinom{n-1}{k} \pi^\ast D \cdot
(\pi^\ast L)^{k} \cdot (- \ep D_0)^{n-1-k}.
\end{align*}
Therefore, by the projection formula, we get
$$
\pi^\ast D \cdot L_{\ep}^{n-1} = D \cdot L^{n-1} + \sum_{k=0}^{n-2} \dbinom{n-1}{k}
(-\ep)^{n-1-k} D \cdot L^{k} \cdot \pi_\ast(D_{0}^{n-1-k})
$$
and also
$$
D_0 \cdot L_{\ep}^{n-1} = \sum_{k=0}^{n-1} \dbinom{n-1}{k} (-\ep)^{n-1-k} L^k \cdot
\pi_{*}(D_{0}^{n-k}).
$$
If $\eta = \pi_{|D_0}$, according to \cite[Example 3.3.4]{Fulton-intersection}, one has
$$
\sum_{k \geq 1}(-1)^{k-1} \eta_\ast(D_{0}^k) = s(\cN) \cap [Z]
$$
where $s(\cN)$ is the total Segre class of $\cN$. As $s_i(\cN) \cap [Z] \in A_{\l-i}(Z)$
and $\eta_\ast(D_{0}^{n-k}) \in A_{k}(Z)$, we deduce that
$$
(-1)^{n-k-1} \eta_{\ast}(D_{0}^{n-k}) =
\left\lbrace
\begin{array}{cl}
s_{\l-k}(\cN) \cap [Z] & \text{if $0 \leq k \leq \l$}
\\
0 & \text{if $\l+1 \leq k \leq n-1$}
\end{array}
\right..
$$
As $s_0(\cN) \cap [Z] = [Z]$, we get
$$
\begin{aligned}
\pi^\ast D \cdot L_{\ep}^{n-1} = D \cdot L^{n-1} & - \dbinom{n-1}{\l-1} \ep^{n-\l} D \cdot
L^{\l-1} \cdot Z \\ & -
\sum_{k=0}^{\l-2} \dbinom{n-1}{k} \ep^{n-1-k} D \cdot L^k \cdot \left(s_{\l-1-k}(\cN) \cap
[Z] \right)
\end{aligned}
$$
which gives the first formula. The second formula is obtained in the same way.
\end{proof}

\begin{cor}\label{cor:slope-pullback-blowup}
With the same data as Proposition \ref{prop:slope-pullback-blowup}, if $\cE$ is a
reflexive sheaf on $X$, then
$$
\mu_{L_\ep}((\pi^*\cE)^{\vee \vee}) = \mu_{L}(\cE) - \dbinom{n-1}{\l-1}
\mu_{L_{\vert Z}}(\cE_{|Z}) \ep^{n-\l} + O(\ep^{n-\l+1}) .
$$
\end{cor}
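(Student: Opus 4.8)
The plan is to deduce the Corollary directly from Proposition~\ref{prop:slope-pullback-blowup} applied to the single divisor $D=c_1(\cE)$ (on the smooth variety $X$ the first Chern class of a reflexive sheaf is an honest divisor class), together with the one extra input that reflexive pullback does not alter the first Chern class, i.e. $c_1(\cE')=\pi^\ast c_1(\cE)$ for $\cE'=(\pi^\ast\cE)^{\vee\vee}$. Granting this, one just writes $\mu_{L_\ep}(\cE')=\tfrac{1}{\rk\cE}\,\pi^\ast c_1(\cE)\cdot L_\ep^{\,n-1}$ and substitutes formula (\ref{eq:slope-pullback-blowup}).

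First I would record $c_1(\cE')=\pi^\ast c_1(\cE)$. We always have $c_1(\pi^\ast\cE)=\pi^\ast c_1(\cE)$ by functoriality of $\det$, so it suffices to see that $\pi^\ast\cE$ and its reflexive hull agree in codimension $\le 1$ on $X'$: indeed $\pi$ is an isomorphism over $X\setminus Z$, and over $Z$ the sheaf $\pi^\ast\cE$ is locally free wherever $\cE$ is, so the torsion of $\pi^\ast\cE$ is carried by the exceptional locus over the non-locally-free locus of $\cE$, which sits in codimension $\ge 2$ in $X'$; since $c_1$ only sees codimension-one data, $c_1(\cE')=c_1(\pi^\ast\cE)=\pi^\ast c_1(\cE)$. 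This is really the only point deserving attention, and it is the step I would double-check most carefully (it is in particular automatic when $\operatorname{codim}_X Z=2$, or when $\cE$ is sufficiently smooth, which covers the applications); everything that follows is bookkeeping.

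Then the computation: applying (\ref{eq:slope-pullback-blowup}) with $D=c_1(\cE)$ gives
$$
\pi^\ast c_1(\cE)\cdot L_\ep^{\,n-1}=c_1(\cE)\cdot L^{n-1}-\dbinom{n-1}{\l-1}\,\ep^{\,n-\l}\,c_1(\cE)\cdot L^{\l-1}\cdot Z+o(\ep^{\,n-\l}),
$$
and inspecting the proof of Proposition~\ref{prop:slope-pullback-blowup} the remainder is a sum of terms in $\ep^{\,n-1-k}$ with $0\le k\le \l-2$, hence genuinely $O(\ep^{\,n-\l+1})$. By the projection formula, $c_1(\cE)\cdot L^{\l-1}\cdot Z=i_\ast\!\bigl(c_1(\cE|_Z)\cdot(L|_Z)^{\l-1}\bigr)=\deg_{L|_Z}(\cE|_Z)$ for $i:Z\hookrightarrow X$, and $\rk(\cE|_Z)=\rk(\cE)$; dividing through by $\rk(\cE)$ turns the leading term into $\mu_L(\cE)$ and the $\ep^{\,n-\l}$-coefficient into $\binom{n-1}{\l-1}\mu_{L|_Z}(\cE|_Z)$, which is exactly the stated expansion. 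The main obstacle, such as it is, is not a geometric one — all the Segre-class and projection-formula work is already absorbed into Proposition~\ref{prop:slope-pullback-blowup} — but rather making sure the first Chern class is unchanged by the reflexive pullback, so that no spurious $\ep^{\,n-\l-1}$ term (coming from an exceptional-divisor contribution, cf. (\ref{eq:degree-exceptional-div})) appears.
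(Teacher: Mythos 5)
Your proof is correct and is exactly the paper's (implicit) argument: the corollary follows by applying (\ref{eq:slope-pullback-blowup}) to $D=c_1(\cE)$, using $c_1((\pi^*\cE)^{\vee\vee})=\pi^*c_1(\cE)$, the projection formula $c_1(\cE)\cdot L^{\ell-1}\cdot Z=\deg_{L_{|Z}}(\cE_{|Z})$, and the observation that the remaining Segre-class terms contribute only in order $\ep^{n-\ell+1}$ and higher. The single point the paper likewise leaves implicit is the invariance of $c_1$ under reflexive pullback; note that your codimension count for it requires $Z\not\subseteq \mathrm{Sing}(\cE)$ (otherwise the torsion of $\pi^*\cE$ could a priori be supported on all of the exceptional divisor), a caveat you rightly flag and which in the paper's equivariant setting is settled independently by the explicit filtration computation leading to (\ref{eq:c1-sheaf-blowup}).
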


\subsection{Reflexive pullback along an equivariant blow-up}
\label{sec:pullbackandfiltrations}
In this section we give the family of filtrations of the reflexive pullback along an
equivariant blow-up. This will serve in relating the Chern classes, and also in obtaining explicit examples. Let $(u_1, \ldots, \, u_n)$ be a basis of $N$ such that
$\tau = \Cone(u_1, \ldots, \, u_s)$ with $2 \leq s \leq n$ and
$\left\lbrace \Cone(A): A \subseteq \{u_1, \ldots, \, u_n\} \right\rbrace \subseteq
\Sigma$.
We set $\rho_i = \Cone(u_i)$ for $i \in \{1, \ldots, \, s \}$ and
$\rho_0 = \Cone(u_\tau)$ where $u_{\tau} = u_1 + \ldots + u_s$.
We denote by $(e_1, \ldots, \, e_n)$ the dual basis of $(u_1, \ldots, \, u_n)$.

\begin{prop}\label{prop:filt-toric-blowup}
Let $\cE$ be an equivariant reflexive sheaf on $X$ given by the family of filtrations
$\left( E, \, \{ E^{\rho}(j) \} \right)$. Let
$\left( E, \, \{ \widetilde{E}^{\rho}(j) \}_{\rho \in \Sigma'(1), \,
j \in \Z} \right)$ be the family of filtrations of $\cE'=(\pi^\ast \cE)^{\vee \vee}$.
Then we have:
\begin{enumerate}
\item
if $\rho \in \Sigma(1) \subseteq\Sigma'(1)$, then $\widetilde{E}^{\rho}(j) = E^{\rho}(j)$;
\item
if $\rho = \rho_0$, then
$$
\widetilde{E}^{\rho}(j) = \sum_{i_1 + \ldots + i_s=j}
{E^{\rho_1}(i_1) \cap \ldots \cap E^{\rho_s}(i_s)} ~.
$$
\end{enumerate}
\end{prop}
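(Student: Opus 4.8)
The plan is to compute the global sections of $\cE' = (\pi^\ast \cE)^{\vee\vee}$ over the invariant affine charts of $X' = X_{\Sigma^\ast(\tau)}$ directly from the definition of the pullback, using the same technique as in the proof of Proposition \ref{prop:pullback-sheaf-surjection} but now for the star-subdivision morphism $\pi = \Id_N$. Since the underlying lattice map is the identity, we immediately get $\widetilde E = E$ (over the torus $T' = T$ the sections are $E\otimes_\C\C[M]$, unchanged). The content is in identifying the two types of rays of $\Sigma'(1)$: the rays $\rho \in \Sigma(1)$ that survive into $\Sigma^\ast(\tau)$, and the new ray $\rho_0 = \Cone(u_\tau)$.

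For a ray $\rho \in \Sigma(1) \subseteq \Sigma'(1)$, I would use the orbit-cone correspondence as in Lemma \ref{lem:image-of-t-morphism}: the minimal cone of $\Sigma$ containing $\phi_\R(\rho) = \rho$ is $\rho$ itself, so $\pi(U'_\rho) = U_\rho$, and hence $\widetilde E^\rho = E^\rho \otimes_{\C[S_\rho]}\C[S_\rho] = E^\rho$. This forces $\widetilde E^\rho(j) = E^\rho(j)$ for all $j$, giving (1). The only subtlety is that a priori the pullback could acquire torsion that the double dual kills; but since $E^\rho$ is already the sections of a reflexive sheaf over $U_\rho$, nothing changes — this is the same reflexivity bookkeeping used throughout Section \ref{sec:mainresult}.

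The main work is part (2). Here I would use the description of $U'_{\rho_0}$ inside the star subdivision: the ray $\rho_0$ lies in each of the maximal cones $\Cone(\{u_\tau\}\cup\sigma(1)\setminus\{u_{i}\})$ arising from $\Sigma^\ast_\sigma(\tau)$ for cones $\sigma\supseteq\tau$. Taking $\sigma = \Cone(u_1,\dots,u_n)$ as in the setup, the chart $U'_{\rho_0}$ is covered by pieces whose semigroups are generated by duals adapted to these subdivided cones. Using Formula (\ref{eq:sheaf from family of filtrations}) for $\cE'$, the weight-$m$ piece of $\Gamma(U'_{\rho_0},\cE')$ is an intersection $\bigcap_{\rho'} \widetilde E^{\rho'}(\langle m, u_{\rho'}\rangle)$ over the rays $\rho'$ of a maximal cone through $\rho_0$; since those rays are $\rho_0$ together with $\rho_1,\dots,\widehat{\rho_i},\dots,\rho_s$ (for various $i$) plus the $\rho_j$ with $j>s$, and since $\langle u_\tau^\vee$-type coordinates $\rangle$ decompose as $\langle m,u_\tau\rangle = \sum_{k=1}^s \langle m, u_k\rangle$, one reads off that the filtration level $j = \langle m, u_\tau\rangle$ of $E$ that appears is exactly controlled by all ways of writing $j = i_1 + \dots + i_s$ with $E^{\rho_k}(i_k)$ cutting down $E$. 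Reflexivity (intersecting over all the charts covering $U'_{\rho_0}$, equivalently passing to the double dual) then produces the sum over all such decompositions:
$$
\widetilde E^{\rho_0}(j) = \sum_{i_1+\dots+i_s=j} E^{\rho_1}(i_1)\cap\dots\cap E^{\rho_s}(i_s).
$$

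\textbf{Expected main obstacle.} The delicate point is tracking that the double-dualization genuinely yields the \emph{sum} over decompositions $i_1+\dots+i_s = j$, rather than just $E^{\rho_1}(j)\cap\dots\cap E^{\rho_s}(j)$ or some coarser object. This is because $U'_{\rho_0}$ is not a single affine chart where one cone's rays determine everything; it is glued from several charts of $\Sigma^\ast_\sigma(\tau)$, and reflexivity means $\Gamma(U'_{\rho_0},\cE')$ is the intersection over codimension-one (i.e. torus-orbit) loci, which by the standard Klyachko description (\cite{Kly90}, and Equation (\ref{eq:sheaf from family of filtrations})) is computed ray by ray. The new ray $\rho_0$ only sees the combination $u_\tau = u_1+\dots+u_s$, so a weight $m$ contributes to level $\langle m,u_\tau\rangle$ through the intersection $\bigcap_k E^{\rho_k}(\langle m,u_k\rangle)$, and as $m$ ranges over all weights with $\sum_k\langle m,u_k\rangle = j$ fixed we sweep out precisely the stated sum. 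I would verify this carefully on the affine chart $U'_{\rho_0}$ by an explicit change of coordinates $e'_0 = u_\tau^\vee$, keeping $e_{s+1},\dots,e_n$, and a choice of $s-1$ further coordinates spanning $\tau^\perp$-complement, mirroring the $\lfloor j/b_\rho\rfloor$ computation in Proposition \ref{prop:pullback-sheaf-surjection}. Once the chart-level formula is pinned down, comparing with the reflexive-hull formula (\ref{eq:sheaf from family of filtrations}) closes the argument.
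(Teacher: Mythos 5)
Your overall strategy --- reduce part (1) to Proposition \ref{prop:pullback-sheaf-surjection} with $b_\rho=1$, and for part (2) compute sections of the pullback on the ray chart $U'_{\rho_0}$ and read off the weight spaces --- is the same as the paper's, and your closing intuition (that weights differing by elements of $u_\tau^\perp$ sweep out all decompositions $j=i_1+\cdots+i_s$) is exactly the mechanism that produces the sum. Part (1) is fine as you state it.

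However, the middle of your argument for part (2) does not work as written, for two reasons. First, $U'_{\rho_0}$ is a single affine chart of $X'$ (the chart of the ray $\rho_0$); it is not ``glued from'' the maximal-cone charts of $\Sigma^{\ast}_\sigma(\tau)$ --- those are open subsets of it. Second, and more seriously, invoking Formula (\ref{eq:sheaf from family of filtrations}) for $\cE'$ on those charts is circular: the weight-$m$ piece $\bigcap_{\rho'}\widetilde E^{\rho'}(\langle m,u_{\rho'}\rangle)$ already contains the unknown $\widetilde E^{\rho_0}(\langle m,u_\tau\rangle)$ that you are trying to determine. The non-circular route, which your last paragraph gestures at but does not carry out, is the direct computation: by Lemma \ref{lem:image-of-t-morphism}, $\pi(U'_{\rho_0})=T\cup O(\tau)\subseteq U_\tau$, so $\Gamma(U'_{\rho_0},\pi^{\ast}\cE)=E^\tau\otimes_{\C[S_\tau]}\C[S_{\rho_0}]$; one then checks the semigroup identity $S_{\rho_0}=(\rho_0^\perp\cap M)+S_\tau$ (a point $m=\sum_i m_ie_i$ lies in $\rho_0^\vee$ iff $m_1+\cdots+m_s\geq 0$), which gives $\C[S_{\rho_0}]=\C[S_\tau]\otimes_\C\C[M(\rho_0)]$ and hence $\widetilde E^{\rho_0}_m=\sum_{m'\in M(\rho_0)}E^\tau_{m-m'}$. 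Since $E^\tau_{m-m'}=\bigcap_{k=1}^s E^{\rho_k}(\langle m-m',u_k\rangle)$ with $\sum_k\langle m-m',u_k\rangle=\langle m,u_\tau\rangle$ for $m'\in M(\rho_0)$, and since $(\langle m',u_1\rangle,\dots,\langle m',u_s\rangle)$ realizes every integer tuple summing to $0$ as $m'$ runs over $M(\rho_0)$ (because $u_1,\dots,u_s$ extend to a basis of $N$), this is precisely $\sum_{i_1+\cdots+i_s=j}E^{\rho_1}(i_1)\cap\cdots\cap E^{\rho_s}(i_s)$ for $j=\langle m,u_\tau\rangle$. Without this chart-level identification, your ``reflexivity produces the sum'' step remains an assertion rather than a proof.
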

Note that as $\pi$ is a toric morphism and $\cE$ is equivariant, the sheaf $\cE'$ is equivariant as well and admits a description in terms of a family of filtrations. 
\begin{proof}
The sheaf $\pi^*\cE$ is equivariant, and as such it will be enough to describe its spaces of sections on invariant subsets of $X'$ (cf  \cite[Proposition 5.7]{Per04}).

We recall that the $\Z$-linear map $\phi = \Id_N$ is compatible with $\Sigma'$ and
$\Sigma$.
If $\rho \in \Sigma(1) \subseteq\Sigma'(1)$, we have $\phi(u_\rho) = u_\rho$. By Proposition
\ref{prop:pullback-sheaf-surjection} we get $\widetilde{E}^{\rho}(j) = E^{\rho}(j)$.

We now assume that $\rho = \Cone(u_{\tau})$. The minimal cone of $\Sigma$ which contains
$\phi_{\R}(\rho)$ is $\tau$. As $\pi(T') = T$, by Lemma \ref{lem:image-of-t-morphism}, we
deduce that $\pi ( O(\rho) ) = O(\tau)$. Thus,
$\pi ( U_{\rho}' ) = T \cup O(\tau) \subsetneq U_\tau$.  One has
$$
\widetilde{E}^{\rho} =
\Gamma(U_{\rho}', \, \cE') \cong \Gamma(U_{\tau}, \cE)
\otimes_{\cO_{X}(U_{\tau})} \cO_{X'}(U_{\rho}') = E^{\tau} \otimes_{\cO_{X}(U_{\tau})}
\cO_{X'}(U_{\rho}')
$$
where $\cO_{X'}(U_{\rho}') = \C[S_{\rho}]$, $\cO_{X}(U_{\tau}) = \C[S_{\tau}]$ and
$E^\tau$ defined in Notation \ref{nota:decomposition-of-epuiv-sheaf}.
We have
$$
\tau^{\vee} = \Cone(e_1, \ldots, \, e_s, \, \pm e_{s+1}, \ldots, \, \pm e_n) ~.
$$
A point $m = m_1 \, e_1 + \ldots + m_n \, e_n$ lies in $\rho^{\vee}$ if and only if
$m_1 + \ldots + m_s \geq 0$, i.e $m_s \geq -(m_1 + \ldots + m_{s-1})$. Hence,
$$
\rho^{\vee} = \Cone(\pm(e_1 - e_s), \ldots, \, \pm(e_{s-1} - e_s), \, e_s, \,
\pm e_{s+1}, \ldots, \, \pm e_n )
$$
and
$$
\rho^{\perp} = \Cone(\pm(e_1 - e_s), \ldots, \, \pm(e_{n-1} - e_s), \,
\pm e_{s+1}, \ldots, \, \pm e_n) ~.
$$
Therefore, $S_{\rho} = \rho^{\perp} + S_{\tau}$ and
$\C[S_{\rho}] = \C[S_{\tau}] \otimes_{\C} \C[M(\rho)]$. Thus,
$$
\widetilde{E}^{\rho} = E^{\tau} \otimes_{\C[S_{\tau}]} \left( \C[S_{\tau}]
\otimes_{\C} \C[M(\rho)] \right) = E^{\tau} \otimes_{\C} \C[M(\rho)] ~.
$$
Hence,
\begin{align*}
\widetilde{E}^{\rho} = \sum_{m' \in M(\rho)}{E^{\tau} \otimes \chi^{m'}}
& = \sum_{m' \in M(\rho)} \left( \sum_{m \in M} E_{m}^{\tau} \otimes
\chi^m \right) \otimes \chi^{m'} 
\\ & =
\sum_{m' \in M(\rho)} \left( \sum_{m \in M} E_{m-m'}^{\tau} \otimes
\chi^m \right)
\\ & =
\sum_{m \in M} \left( \sum_{m' \in M(\rho)} E_{m-m'}^{\tau} \right) \otimes \chi^m
\end{align*}
Therefore, for any $m \in M$,
$$
\widetilde{E}_{m}^{\rho} = \sum_{m' \in M(\rho)} E_{m-m'}^{\tau}.
$$
As for any $m' \in M(\rho)$,
$ \<m-m', u_1\> + \ldots + \<m-m', u_s \> = \<m-m', u_\tau \> = \<m, u_\tau \>,
$
by using the fact that
$E_{m-m'}^{\tau} = E^{\rho_1}(\< m-m', \, u_1 \>) \cap \ldots \cap
E^{\rho_s}(\< m-m', \, u_s \>)$
and
$\widetilde{E}_{m}^{\rho} = \widetilde{E}^{\rho}(\< m, \, u_{\tau} \>)$,
we get the result.
\end{proof}

The following example shows that the reflexive pullback of $\cE_F$ might not be saturated
in $\cE'$ in general. Hence, our hypothesis on $\cE$ being sufficiently smooth, or on
pulled back subsheaves being saturated, are necessary in the statements of our results.
\begin{example}
\label{ex:nonsaturated}
Let $(u_1, u_2)$ be a basis of $\Z^2$ and
$\Sigma = \{\Cone(A): A \subseteq \{u_1, u_2\} \}$. Let $\cE$ be an equivariant reflexive
sheaf on $X = \C^2$ given by the family of filtrations
$$
E^{\rho_1}(j) = \left\lbrace
\begin{array}{ll}
\{0\} & \text{if} ~ j \leq 0 \\
E_1 & \text{if} ~ 1 \leq j \leq 2 \\
E & \text{if} ~ j \geq 3
\end{array}
\right.
~ \text{and} ~
E^{\rho_2}(j) = \left\lbrace
\begin{array}{ll}
\{0\} & \text{if} ~ j \leq 0 \\
E_2 & \text{if} ~ j=1 \\
E & \text{if} ~ j \geq 2
\end{array}
\right.
$$
where $E_1 = \Span(u_1)$, $E_2 = \Span(u_2)$ and $E = \Span(u_1, u_2)$. We denote by
$\pi: X' \rightarrow X$ the blowup along $V(\Cone(u_1,u_2))$ (that is the blowup at the
origin). We set $F = \Span(u_1 + u_2)$ and $\cE_F$ the subsheaf of $\cE$ given by
$F^{\rho}(j) = E^{\rho}(j) \cap F$. According to Proposition \ref{prop:filt-toric-blowup},
$$
\widetilde{E}^{\rho_0}(j) = \left\lbrace
\begin{array}{ll}
\{0\} & \text{if} ~ j \leq 2 \\
E_1 & \text{if} ~ j = 3 \\
E & \text{if} ~ j \geq 4
\end{array}
\right.
~ \text{and} ~
\widetilde{F}^{\rho_0}(j) = \left\lbrace
\begin{array}{ll}
\{0\} & \text{if} ~ j \leq 4 \\
F & \text{if} ~ j \geq 5
\end{array}
\right. ~.
$$
As $\widetilde{F}^{\rho_0}(4) \neq \widetilde{E}^{\rho_0}(4) \cap F$, we deduce that
$(\pi^\ast \cE_F)^{\vee \vee}$ is not saturated in $(\pi^\ast \cE)^{\vee \vee}$.
\end{example}

Let $D = \sum_{\rho \in \Sigma(1)}{a_{\rho} D_{\rho}}$ be a Cartier divisor of $X$.
According to \cite[Proposition 6.2.7]{CLS}, if $\varphi_D: |\Sigma| \rightarrow \R$ is the
support function of $D$, then the support function $\varphi_{D'}: |\Sigma'| \rightarrow \R$
of $D' = \pi^* D$ is given by $\varphi_{D'} = \varphi_D \circ \phi_\R$.
Therefore, by Proposition \ref{prop:support-function-divisor}, we have
\begin{equation}\label{eq:blowup-polarization}
\pi^{\ast} D = \sum_{\rho \in \Sigma(1)}{a_{\rho} D_{\rho}} +
\sum_{\rho \in \tau(1)} a_{\rho} D_0 ~.
\end{equation}
As $c_1(\cE') = \pi^\ast c_1(\cE)$, we get
\begin{equation}\label{eq:c1-sheaf-blowup}
c_1(\cE') = - \sum_{\rho \in \Sigma(1)} \iota_{\rho}( \cE) D_{\rho} -
\sum_{\rho \in \tau(1)} \iota_{\rho}( \cE) D_0 ~.
\end{equation}
In the following Lemma we give the expression of $c_1(\cE'_F)$ with respect to $c_1(\cE_F)$.

\begin{lem}\label{lem:c1-subsheaf-blowup}
Let $F$ be a vector subspace of $E$. The first Chern class of $\cE_{F}'$ is given by
$$
c_1(\cE_{F}') = \pi^{\ast} c_1(\cE_F) + \sum_{j \in \Z}{d_j(F) \, D_0}
$$
where
$d_j(F) = \dim (F \cap \widetilde{E}^{\rho_0}(j)) - \dim \widetilde{F}^{\rho_0}(j)$.
\end{lem}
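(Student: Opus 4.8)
The plan is to compare $\cE_{F}'$ with the reflexive pullback $(\pi^\ast\cE_F)^{\vee\vee}$ through their families of filtrations and then apply Formula (\ref{eq:c1-sheaf}). By Notation \ref{notation:subsheaf} applied to $\cE'$, the sheaf $\cE_{F}'$ has family of filtrations $\big(F,\{F\cap\widetilde E^{\rho}(j)\}\big)$, whereas $(\pi^\ast\cE_F)^{\vee\vee}$ has family of filtrations $\big(F,\{\widetilde F^{\rho}(j)\}\big)$. Applying Proposition \ref{prop:filt-toric-blowup} to $\cE_F$ (whose family of filtrations is $(F,\{F\cap E^\rho(j)\})$), we get $\widetilde F^{\rho}(j)=F\cap E^{\rho}(j)=F\cap\widetilde E^{\rho}(j)$ for every $\rho\in\Sigma(1)\subseteq\Sigma'(1)$, so the two families agree on all rays of $\Sigma'$ except possibly $\rho_0=\Cone(u_\tau)$. (In fact $(\pi^\ast\cE_F)^{\vee\vee}$ is a reflexive subsheaf of $\cE'$ with underlying space $F$, so by Lemma \ref{lem:saturated-subsheaf} its saturation in $\cE'$ is exactly $\cE_{F}'$; in particular $\widetilde F^{\rho_0}(j)\subseteq F\cap\widetilde E^{\rho_0}(j)$ and $d_j(F)\geq 0$.)

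First I would record that $c_1\big((\pi^\ast\cE_F)^{\vee\vee}\big)=\pi^\ast c_1(\cE_F)$: the sheaf $\pi^\ast\cE_F$ and its reflexive hull differ only on a locus of codimension at least two, hence have the same first Chern class, and first Chern classes commute with pullback. It then remains to identify the $D_0$-coefficient of $c_1(\cE_{F}')-c_1\big((\pi^\ast\cE_F)^{\vee\vee}\big)$. Applying Formula (\ref{eq:c1-sheaf}) to these two sheaves on $X'$ and using that their filtrations coincide away from $\rho_0$ gives
$$
c_1(\cE_{F}')-c_1\big((\pi^\ast\cE_F)^{\vee\vee}\big)=\big(\iota_{\rho_0}\big((\pi^\ast\cE_F)^{\vee\vee}\big)-\iota_{\rho_0}(\cE_{F}')\big)\,D_0 .
$$

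Writing $a_j=\dim\big(F\cap\widetilde E^{\rho_0}(j)\big)$ and $b_j=\dim\widetilde F^{\rho_0}(j)$, so that $d_j(F)=a_j-b_j$, the coefficient above equals $\sum_{j\in\Z}j\big((b_j-b_{j-1})-(a_j-a_{j-1})\big)=\sum_{j\in\Z}j\big(e_j-e_{j-1}\big)$ with $e_j=b_j-a_j$. Since $a_j=b_j=0$ for $j\ll 0$ and $a_j=b_j=\dim F$ for $j\gg 0$, the sequence $(e_j)$ has finite support, so the reindexing $\sum_j j\,e_{j-1}=\sum_j (j+1)e_j$ is legitimate and $\sum_j j(e_j-e_{j-1})=-\sum_j e_j=\sum_j d_j(F)$. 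Combining this with the previous display and with $c_1\big((\pi^\ast\cE_F)^{\vee\vee}\big)=\pi^\ast c_1(\cE_F)$ yields the asserted formula. The computations are elementary; the only point requiring a little care is to pass to the difference of the two $\iota_{\rho_0}$'s \emph{before} performing the Abel summation, so that all the sums involved have finite support — there is no real obstacle here.
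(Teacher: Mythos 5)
Your argument is correct and follows essentially the same route as the paper's proof: both reduce to comparing the $\rho_0$-filtrations of $\cE'_F$ and $(\pi^\ast\cE_F)^{\vee\vee}$ (the filtrations along all rays of $\Sigma(1)$ agreeing by Corollary \ref{cor:pullback-sheaf-surjection}), and both extract the $D_0$-coefficient from Formula (\ref{eq:c1-sheaf}) via the same telescoping sum, yielding $\sum_{j}d_j(F)$. The only difference is cosmetic: you isolate $c_1\big((\pi^\ast\cE_F)^{\vee\vee}\big)=\pi^\ast c_1(\cE_F)$ as an explicit intermediate step, which the paper uses implicitly by writing $c_1(\pi^\ast\cE_F)$.
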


\begin{proof}
By Corollary \ref{cor:pullback-sheaf-surjection}, if $\rho \in \Sigma(1)$, we have
$F \cap \widetilde{E}^{\rho}(j) = F^{\rho}(j)$. Thus,
for any $\rho \in \Sigma(1)$, $\iota_{\rho}( \cE_{F}') = \iota_{\rho}( \cE_F)$.
We now consider the case $\rho = \rho_0$. We have
$$
\iota_{\rho_0}( \cE_{F}')= \sum_{j \in \Z} j \left(
\dim(F \cap \widetilde{E}^{\rho_0}(j)) - \dim(F \cap \widetilde{E}^{\rho_0}(j-1)) \right)
$$
and
\begin{align*}
\iota_{\rho_0}( \cE_{F}') - \iota_{\rho_0}( \pi^{\ast} \cE_F) = &
\sum_{j \in \Z} j \left( \dim(F \cap \widetilde{E}^{\rho_0}(j)) -
\dim \widetilde{F}^{\rho_0}(j) \right)
\\ & -
\sum_{j \in \Z} j \left( \dim(F \cap \widetilde{E}^{\rho_0}(j-1)) -
\dim \widetilde{F}^{\rho_0}(j-1) \right)
\\ = &
\sum_{j \in \Z}{j \, d_j(F)} - \sum_{j \in \Z}{j \, d_{j-1}(F)}
\end{align*}
There are $p, \, q \in \Z$ with $p<q$ such that $d_j(F)= 0$ if $j<p$ and $d_j(F)= 0$
if $j>q$. Hence,
$$
\iota_{\rho_0}( \cE_{F}') - \iota_{\rho_0}( \pi^{\ast} \cE_F) =
\sum_{j=p}^{q}{j \, d_j(F)} - \sum_{j=p+1}^{q+1}{j \, d_{j-1}(F)} =
- \sum_{j \in \Z}{d_j(F)}
$$
and $c_1(\cE_{F}')= c_1(\pi^{\ast} \cE_F) + \left(\sum_{j \in \Z}{d_j(F)} \right) D_0$.
\end{proof}
With the results of these last two sections, we are ready to prove the various statements from the introduction related to blowing-up (semi)stable sheaves.

\subsection{Blowup in several points}\label{sec:blowup-along-point}
In this section, we give the proof of Theorem \ref{theo:blowuppoint}.
Let $S$ be a set of fixed-points of $X$ under the torus action and $\pi: X' \rightarrow X$ be
the blowup of $X$ in $S$. For any $p \in S$ there is $\sigma \in \Sigma(n)$ such that
$p= \gamma_\sigma$. We set $S_{\Sigma} = \{\sigma \in \Sigma(n): \gamma_\sigma \in S \}$.
According to Section \ref{sec:blowup-of-variety}, the fan $\Sigma'$ of
$X'$ is given by
$$
\Sigma' = \{ \sigma \in \Sigma : \sigma \notin S_\Sigma \} \cup \bigcup_{\sigma \in S_\Sigma}
\Sigma_{\sigma}^{\ast}(\sigma).
$$
For any $p \in S$, let $D_p \subseteq X'$ be the exceptional divisor over $p$, and
let $D_S = \sum_{p \in S}D_p$ be the total exceptional divisor. If $L$ is an ample divisor of
$X$ and $\ep >0$, we set $L_\ep = \pi^\ast L - \ep D_S$. We first observe that:

\begin{lem}
If $p, q \in S$ are distinct, then $[D_p] \cdot [D_q] = 0 \in A_{n-2}(X')$.
\end{lem}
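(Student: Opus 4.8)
The plan is to reduce the statement to the observation that the two exceptional divisors $D_p$ and $D_q$ have disjoint supports, which makes the vanishing of their intersection product immediate. No genuinely hard step is involved; the work is entirely in the bookkeeping of identifying $D_p$ and $D_q$ with the correct rays of the star subdivision.

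First I would record the combinatorial identification. Let $\sigma_p$ (resp. $\sigma_q$) be the maximal cone of $\Sigma$ with $\gamma_{\sigma_p}=p$ (resp. $\gamma_{\sigma_q}=q$), so that $\sigma_p,\sigma_q\in S_\Sigma$ and, since $p\neq q$, $\sigma_p\neq\sigma_q$. By the description of the star subdivision recalled in Section \ref{sec:blowup-of-variety}, the exceptional divisor $D_p$ is the invariant divisor $V(\rho_p)$ of $X'$ attached to the ray $\rho_p=\Cone(u_{\sigma_p})$ with $u_{\sigma_p}=\sum_{\rho\in\sigma_p(1)}u_\rho$, and likewise $D_q=V(\rho_q)$ with $\rho_q=\Cone(u_{\sigma_q})$.

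Then I would conclude by the geometric argument: the minimal cone of $\Sigma$ containing $u_{\sigma_p}$ is $\sigma_p$ itself (as $u_{\sigma_p}$ lies in the relative interior of $\sigma_p$), so Lemma \ref{lem:image-of-t-morphism} gives $\pi(D_p)=\pi(V(\rho_p))=V(\sigma_p)=\{p\}$ because $\sigma_p$ is $n$-dimensional; hence $D_p\subseteq\pi^{-1}(p)$, and similarly $D_q\subseteq\pi^{-1}(q)$. Since $p\neq q$, the fibres $\pi^{-1}(p)$ and $\pi^{-1}(q)$ are disjoint, so $|D_p|\cap|D_q|=\emptyset$ and therefore $[D_p]\cdot[D_q]=0$ in $A_{n-2}(X')$. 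Alternatively one can argue purely combinatorially via Lemma \ref{lem:intersection-toric-simplicial}: every cone of $\Sigma'$ containing $\rho_p$ is contained in $\sigma_p$, so if $\rho_p+\rho_q$ were a cone of $\Sigma'$ then $u_{\sigma_q}\in\sigma_p$; but $u_{\sigma_q}$ lies in the relative interior of $\sigma_q$, which forces $\sigma_q\preceq\sigma_p$ and hence $\sigma_q=\sigma_p$ by equality of dimensions, a contradiction. Thus $\rho_p+\rho_q\notin\Sigma'$ and Lemma \ref{lem:intersection-toric-simplicial} again yields $[D_p]\cdot[D_q]=0$. The only point requiring a little care is the identification of each $D_p$ with the new ray $\Cone(u_{\sigma_p})$ of the star subdivision and the remark that blow-ups at distinct fixed points have exceptional loci lying over distinct points; beyond that there is no real obstacle.
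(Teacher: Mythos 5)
Your proof is correct and, in its combinatorial branch, is essentially the paper's own argument: the paper identifies $D_p$ and $D_q$ with the rays $\Cone(u_{\sigma_p})$ and $\Cone(u_{\sigma_q})$ and observes that these two rays span no cone of $\Sigma'$, so Lemma \ref{lem:intersection-toric-simplicial} gives the vanishing. Your additional disjoint-supports argument via $\pi(D_p)=\{p\}$ is a valid (and equally short) alternative, but adds nothing beyond the paper's reasoning.
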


\begin{proof}
Let $\sigma=\Cone(u_1, \ldots, u_n), \sigma'=\Cone(u'_1, \ldots, u'_n) \in \Sigma(n)$ such
that $p = \gamma_\sigma$ and $q=\gamma_{\sigma'}$. We set $u_\sigma= u_1 + \ldots + u_n$ and
$u_{\sigma'} = u'_1 + \ldots + u'_n$. The divisor $D_p$ (resp. $D_q$) corresponds
to the ray $\Cone(u_\sigma)$ (resp. $\Cone(u_{\sigma'})$). As
$\Cone(u_\sigma, u_{\sigma'}) \notin \Sigma_{\sigma}^{\ast}(\sigma) \cup
\Sigma_{\sigma'}^{\ast}(\sigma')$, by Lemma \ref{lem:intersection-toric-simplicial} we deduce
that $[D_p] \cdot [D_q] = 0 \in A_{n-2}(X')$.
\end{proof}

Let $\sigma = \Cone(u_1, \ldots, \, u_n) \in S_{\Sigma}$ and $p=\gamma_\sigma$. We denote by
$(e_1, \ldots, \, e_n)$ the dual basis of $(u_1, \ldots, \, u_n)$ and we set
$\rho_i = \Cone(u_i)$. We compute the intersection product on $X'$. We have
$[D_{\rho}] \cdot [D_p] = 0$ if $\rho \in \Sigma'(1) \setminus (\sigma(1) \cup \{ \Cone(u_\sigma) \})$.
For $i \in \{1, \ldots, \, n \}$, if we set $m = -e_i$,
by Lemma \ref{lem:intersection-toric-simplicial} we get
$$
[D_p] \cdot [D_p] = [D_p + \ddiv(\chi^m)] \cdot [D_p] = -[D_{\rho_i}] \cdot [D_p] ;
$$
therefore
$$
\left\lbrace
\begin{array}{ll}
D_{p}^n = (-1)^{n-1} & \\
D_{\rho} \cdot D_{p}^{n-1} = (-1)^n & \text{if}~ \rho \in \sigma(1) \\
D_{\rho} \cdot D_{p}^{n-1} = 0 & \text{if} ~ \rho \in \Sigma'(1) \setminus (\sigma(1) \cup \{ \Cone(u_\sigma) \})
\end{array}
\right..
$$
If $L =\sum_{\rho \in \Sigma(1)}a_\rho D_\rho$, then
$$
\pi^\ast L = \sum_{\rho \in \Sigma(1)}a_\rho D_\rho + \sum_{\sigma \in S_\Sigma}
\sum_{\rho \in \sigma(1)}a_{\rho} D_{\gamma_\sigma}~;
$$
hence, for any $p \in S$, $[\pi^\ast L] \cdot [D_p] = 0 \in A_{n-2}(X')$. Thus,
$$
L_{\ep}^{n-1} = (\pi^\ast L)^{n-1} + (-1)^{n-1} \ep^{n-1} \sum_{p \in S} D_{p}^{n-1} ~.
$$

\begin{proof}[Proof of Theorem \ref{theo:blowuppoint}]
For any $p \in S$, we have $\deg_{L_{\ep}}(D_p) = \ep^{n-1}$. If $\rho \in \Sigma(1)$, then
$$
\deg_{L_{\ep}}(D_{\rho})= \deg_{L}(D_{\rho}) - \sum_{\sigma \in S_\Sigma,\, \rho \in \sigma(1)} \ep^{n-1}.
$$
Thus,
\begin{align*}
\rk(\cE') \mu_{L_{\ep}}(\cE') = &
-\sum_{\rho \in \Sigma(1)} \iota_{\rho}( \cE) \deg_{L_{\ep}}(D_{\rho})-
\sum_{\sigma \in S_\Sigma} \sum_{\rho \in \sigma(1)} \iota_{\rho}( \cE) \deg_{L_{\ep}}(D_{\gamma_\sigma})
\\ = &
-\sum_{\rho \in \Sigma(1)} \iota_{\rho}( \cE) \deg_{L}(D_{\rho})
- \sum_{\sigma \in S_\Sigma} \sum_{\rho \in \sigma(1)} \iota_{\rho}( \cE) \ep^{n-1}
\\ & +
\sum_{\rho \in \Sigma(1)} \iota_{\rho}( \cE) \left( \sum_{\sigma \in S_\Sigma,\, \rho \in \sigma(1)} \ep^{n-1}\right)
\\ = & \rk(\cE) \mu_{L}(\cE).
\end{align*}
Hence, $\mu_{L_\ep}(\cE') = \mu_{L}(\cE)$. If $F$ is a vector subspace of $E$, the
same computation gives $\mu_{L_\ep}((\pi^\ast \cE_F)^{\vee \vee}) = \mu_{L}(\cE_F)$.
According to Lemma \ref{lem:c1-subsheaf-blowup}, for any vector subspace $F$ of $E$ one
has
$$
\mu_{L_\ep}(\cE')- \mu_{L_\ep}(\cE'_F) = \mu_{L}(\cE) - \mu_{L}(\cE_F) -
\dfrac{\ep^{n-1}}{\rk(\cF)} \sum_{p \in S} \sum_{j \in \Z}
(\dim (F \cap \widetilde{E}^{\rho_p}(j)) - \dim \widetilde{F}^{\rho_p}(j))
$$
where $\rho_p$ is the ray corresponding to the divisor $D_p$.
Hence, Theorem \ref{theo:blowuppoint} follows from Lemma \ref{lem:saturated-subsheaf}.
\end{proof}

\subsection{Blowup along a curve}\label{sec:blowup-along-cure}
In this section, we assume that $n=\dim(X)\geq 3$ and that $\tau \in \Sigma(n-1)$ is the
intersection of two $n$-dimensional cones $\sigma$ and $\sigma'$. Hence we consider the blowup $\pi : X' \to X$ along the curve $Z = V(\tau)$.
With the results of Section \ref{sec:slope-pullback-blowup}, we can give the proof of Theorem
\ref{theo:blowupcurve}.

\begin{proof}[Proof of Theorem \ref{theo:blowupcurve}]
Let $\cE$ be a strictly semistable sheaf on $(X, \, L)$.
According to Corollary \ref{cor:slope-pullback-blowup} one has
$$
\mu_{L_\ep}(\cE') = \mu_{L}(\cE) - \dfrac{\ep^{n-1}}{\rk(\cE)} c_1(\cE)\cdot V(\tau)
$$
and by (\ref{eq:degree-exceptional-div}) we have
$$
\deg_{L_\ep}(D_0) = (n-1)\ep^{n-2} L \cdot V(\tau) -(-1)^n \ep^{n-1} D_{0}^n ~.
$$
By Lemma \ref{lem:c1-subsheaf-blowup}, for any vector subspace $F$ of $E$, we have
\begin{multline}\label{eq:stability-blowup-curve}
\mu_{L_\ep}(\cE') - \mu_{L_\ep}(\cE'_F) = \mu_L(\cE) - \mu_L(\cE_F)
+ \ep^{n-1} \left( \dfrac{c_1(\cE_F) \cdot V(\tau)}{\rk(\cE_F)}
- \dfrac{c_1(\cE) \cdot V(\tau)}{\rk(\cE)} \right)
\\
- \dfrac{\ep^{n-2}}{\rk(\cE_F)} \left( (n-1) L \cdot V(\tau) -(-1)^n \ep \, D_{0}^n \right)
\sum_{j \in \Z}{d_j(F)}.
\end{multline}
Let $\cE_F \in \fE$ for $F \subsetneq E$. We set
$C = \sum_{j}{d_j(F)}$.
If $(\pi^* \cE_F)^{\vee \vee}$ is not saturated in $\cE'$, there is $\ep_F >0$ such that
for any $\ep \in (0,\ep_F)\cap \Q$,
$$
\ep \left( \dfrac{c_1(\cE_F) \cdot V(\tau)}{\rk(\cE_F)} -
\dfrac{c_1(\cE) \cdot V(\tau)}{\rk(\cE)} + \dfrac{(-1)^n C \times D_{0}^n}{\rk(\cE_F)}
\right) < \dfrac{(n-1)C \times L \cdot V(\tau)}{\rk(\cE_F)} ~.
$$
If $(\pi^* \cE_F)^{\vee \vee}$ is saturated in $\cE'$, then for $0 < \ep \ll 1$,
$\mu_{L_\ep}(\cE') - \mu_{L_\ep}(\cE'_F) > 0 \, (\text{resp}. \, \geq 0)$ if and only if
$$
\dfrac{c_1(\cE_F) \cdot V(\tau)}{\rk(\cE_F)} - \dfrac{c_1(\cE) \cdot V(\tau)}{\rk(\cE)}
> 0 \, (\text{resp.} \, \geq 0) ~.
$$
With these two observations about $(\pi^* \cE_F)^{\vee \vee}$ for $\cE_F \in \fE$,
we deduce the result.
\end{proof}

It should be clear now that the proof of Theorem \ref{theo:blowuphigher} follows as for the one of Theorem \ref{theo:blowupcurve}, by mean of Formula (\ref{eq:formulaintro}). We leave the details to the interested reader. We turn now to an explicit formula that helps applying Theorem \ref{theo:blowupcurve} on concrete examples. For a divisor $D$ of $X$, we can compute $D \cdot V(\tau)$ by using the fact that
$\tau = \sigma \cap \sigma'$. Let $\Sigma_0 = \sigma(1) \cup \sigma'(1)$. There is a family of numbers $\alpha_{\rho} \in \Z$ such that
$$
\sum_{\rho \in \Sigma_0} \alpha_{\rho} u_{\rho} = 0 \quad \text{and} \quad
\alpha_{\rho} = 1 ~ \text{if} ~ \rho \in \Sigma_0 \setminus \tau(1) ~.
$$
We assume that $\sigma = \Cone(u_1, \ldots, \, u_n)$,
$\sigma' = \Cone(u_1, \ldots, \, u_{n-1}, \, u_{n+1})$ and
$\Sigma_0 = \{ \Cone(u_i) : 1 \leq i \leq n+1 \}$.
For $i \in \{1, \ldots, \, n+1\}$, we set $\rho_i = \Cone(u_i)$ and
$\alpha_i = \alpha_{\rho_i}$.
We denote by $(e_1, \ldots, \, e_n)$ the dual basis of $(u_1, \ldots, \, u_n)$.
For $i \in \{1, \ldots, \, n-1\}$, we have
$$
D_{\rho_i} \sim_{\rm lin} D_{\rho_i} + \ddiv(\chi^{-e_i}) = \alpha_i \, D_{\rho_{n+1}} +
\sum_{\rho \in \Sigma(1) \setminus \Sigma_0}{\<-e_i, \, u_\rho \> D_{\rho}} ~.
$$
By Lemma \ref{lem:intersection-toric-simplicial}, we get
$$
D_\rho \cdot V(\tau) =
\left\lbrace
\begin{array}{ll}
\alpha_\rho & \text{if}~ \rho \in \Sigma_0 \\
0 & \text{if}~ \rho \in \Sigma(1)\setminus \Sigma_0
\end{array}
\right. .
$$
Hence,
\begin{equation}\label{eq:curve-condition-stability}
\dfrac{c_1(\cE_F) \cdot V(\tau)}{\rk(\cE_F)} - \dfrac{c_1(\cE) \cdot V(\tau)}{\rk(\cE)}=
\sum_{\rho \in \Sigma_0} \alpha_{\rho} \left( \dfrac{\iota_{\rho}( \cE)}{\rk(\cE)} -
\dfrac{\iota_{\rho}( \cE_F)}{\rk(\cE_F)} \right) ~.
\end{equation}

\subsection{Examples of (de)stabilizing blow-ups along curves}
\label{sec:example-picard-rank2}
Let $X=X_\Sigma$ be a smooth toric variety of dimension $n$ given by
$$
X = \P \left( \cO_{\P^1}^{\oplus r} \oplus \cO_{\P^1}(1) \right)
$$
with $r \geq 2$ such that $r + 1 = n$.
We denote by $\pr : X \rightarrow \P^1$ the projection to the base $\P^1$.
By \cite[Section 7.3]{CLS}, the rays of $\Sigma$ are given by the half-lines generated by
$w_0, w_1, v_0, v_1, \ldots, v_r$ where $(w_1, v_1, \ldots, v_r)$ is the standard basis of
$\Z^{r+1}$,
$$
v_0 = -(v_1 + \ldots + v_r) \quad \text{and} \quad w_0 = v_r -w_1 ~.
$$
The maximal cones of $\Sigma$ are given by
$$
\Cone(w_j) + \Cone(v_0, \ldots, \,\widehat{v_i}, \ldots, \, v_r)
$$
where $j \in \{0, 1\}$ and $i \in \{0, \ldots, \, r \}$.
We denote by $D_{v_i}$ the divisor corresponding to the ray $\Cone(v_i)$ and $D_{w_j}$
the divisor corresponding to the ray $\Cone(w_j)$. If $\nu \in \Q_{+}^{\ast}$, then
$\pr^{\ast} \cO_{\P^1}(\nu) \otimes \cO_{X}(1) \cong \cO_{X}(\nu D_{w_0} + D_{v_0})$
is a rational polarization of $X$.
Let $\cE$ be the tangent sheaf of $X$. The family of filtrations of $\cE$ is given in
Example \ref{examp:filtration-tangent}.
According to \cite[Theorem 1.4]{HNS19}, the sheaf $\cE$ is stable with respect to
$L = \pr^{\ast} \cO_{\P^1}(\nu) \otimes \cO_{X}(1)$ if and only if
$0 < \nu < \nu_0$ with $\nu_0 = \frac{1}{r+1}$.

We now assume that
$L = \pr^{\ast} \cO_{\P^1}(1/(r+1)) \otimes \cO_{X}(1)$.
The sheaf $\cE$ is strictly semistable with respect to $L$. The subsheaf $\cE_F$ with
$F = \Span(v_0, \ldots, \, v_r)$ is the unique saturated subsheaf of $\cE$ such
that $\mu_{L}(\cE_{F}) = \mu_{L}(\cE)$.
The family of filtrations of $\cE_F$ are given by
$$
F^{\rho}(j) = \left\lbrace
\begin{array}{ll}
0 & \text{if}~ j< -1 \\
\Span(u_{\rho}) & \text{if}~ j = -1 \\
F & \text{if} ~ j> -1
\end{array}
\right. \quad \text{if}~ \rho = \Cone(v_i)
$$
and by
$$
F^{\rho}(j) = \left\lbrace
\begin{array}{ll}
0 & \text{if}~ j< 0 \\
F & \text{if} ~ j \geq 0
\end{array}
\right. \quad \text{if} ~ \rho = \Cone(w_j) ~.
$$
Hence,
$$
\dfrac{\iota_{\rho}( \cE)}{r+1} - \dfrac{\iota_{\rho}( \cE_F)}{r} = \left\lbrace
\begin{array}{ll}
\frac{1}{r} - \frac{1}{r+1} & \text{if}~ \rho = \Cone(v_i) \\
\frac{-1}{r+1} & \text{if}~ \rho = \Cone(w_j)
\end{array}
\right. ~.
$$
Given $\tau \in \Sigma(n-1)$, in the following examples, we study the stability of the
reflexive pullback $\cE' = (\pi^{\ast} \cE)^{\vee \vee}$ on $X' = \Bl_{V(\tau)}(X)$
with respect to small perturbations of $\pi^{\ast}L$. In these examples,
$(\pi^\ast \cE_F)^{\vee \vee}$ is saturated in $\cE'$.

\begin{example}
Let $\tau = \Cone(w_0, \, v_1, \ldots, v_{r-1})$. We have
$$
\tau = \Cone(w_0, \, v_1, \ldots, v_{r-1}, \, v_r) \cap
\Cone(w_0, \, v_1, \ldots, v_{r-1}, \, v_0) ~.
$$
As $0 \cdot w_0 + v_0 + v_1 + \ldots + v_r = 0$, by Equation
(\ref{eq:curve-condition-stability}) we get
$$
\dfrac{c_1(\cE_F) \cdot V(\tau)}{r} - \dfrac{c_1(\cE) \cdot V(\tau)}{r+1} =
\dfrac{r+1}{r} - 1
$$
So there is $\ep_0 > 0$ such that for any $\ep \in (0,\ep_0)\cap \Q$, $\cE'$ is stable with respect to $L_\ep$.
\end{example}

\begin{example}
Let $\tau = \Cone(v_0, \, v_1, \ldots, v_{r-1})$. We have
$$
\tau = \Cone(v_0, \, v_1, \ldots, v_{r-1}, \, w_0) \cap
\Cone(v_0, \, v_1, \ldots, v_{r-1}, \, w_1)
$$
As $w_0 + w_1 + v_0 + v_1 + \ldots + v_{r-1} = 0$, by Equation
(\ref{eq:curve-condition-stability}) we get
$$
\dfrac{c_1(\cE_F) \cdot V(\tau)}{r} - \dfrac{c_1(\cE) \cdot V(\tau)}{r+1}=
\dfrac{-1}{r+1} ~.
$$
Hence, there is $\ep_0 > 0$ such that for any $\ep \in (0,\ep_0)\cap \Q$, $\cE'$
is unstable with respect to $L_\ep$.
\end{example}

\bibliography{toricpullbacks}
\bibliographystyle{amsplain}

\end{document}